\numberwithin{equation}{section}
\numberwithin{figure}{section}
\theoremstyle{plain}
\newtheorem{theorem}{Theorem}[section]
\newtheorem{proposition}[theorem]{Proposition}
\newtheorem{lemma}[theorem]{Lemma}
\newtheorem{corollary}[theorem]{Corollary}
\newtheorem{convention}[theorem]{Convention}
\theoremstyle{definition}
\newtheorem{definition}[theorem]{Definition}
\theoremstyle{remark}
\newtheorem{remark}[theorem]{Remark}
\newtheorem{example}[theorem]{Example}
\let\cal\mathcal
\def\Ascr{{\cal A}}
\def\Bscr{{\cal B}}
\def\Escr{{\cal E}}
\def\Mscr{{\cal M}}
\def\Oscr{{\cal O}}
\def\Sscr{{\cal S}}
\def\Xscr{{\cal X}}
\def\id{\text{id}}
\def\Id{\operatorname{id}}
\def\Bimod{\operatorname{Bimod}}
\def\Mod{\operatorname{Mod}}
\def\Ext{\operatorname {Ext}}
\def\Hom{\operatorname {Hom}}
\def\uEnd{\operatorname {\mathcal{E}\mathit{nd}}}
\def\End{\operatorname {End}}
\def\End{\operatorname {End}}
\def\id{{\operatorname {id}}}
\def\r{\rightarrow}
\newcommand{\cone}{\mathrm{cone}}
\renewcommand{\lim}{\mathrm{lim}}
\newcommand{\Ob}{\mathrm{Ob}}
\newcommand{\Z}{\mathbb{Z}}
\newcommand{\N}{\mathbb{N}}
\newcommand{\B}{\mathbb{B}}
\newcommand{\CC}{\mathbf{C}}
\newcommand{\lra}{\rightarrow}
\newcommand{\aaa}{\ensuremath{\mathcal{A}}}
\newcommand{\ccc}{\ensuremath{\mathcal{C}}}
\newcommand{\eee}{\ensuremath{\mathcal{E}}}
\newcommand{\mmm}{\ensuremath{\mathcal{M}}}
\newcommand{\ooo}{\ensuremath{\mathcal{O}}}
\newcommand{\sss}{\ensuremath{\mathcal{S}}}
\newcommand{\xxx}{\ensuremath{\mathcal{X}}}
\let\oldmarginpar\marginpar
\long\def\marginpar#1{\oldmarginpar{\raggedright \tiny \baselineskip 0pt \lineskip 0pt #1}}
\def\Bar{\operatorname{Bar}}
\def\Hoch{\operatorname{Hoch}}
\def\coHoch{\operatorname{coHoch}}
\def\scHoch{\operatorname{scHoch}}
\def\REnd{\operatorname{REnd}}
\def\Ho{\operatorname{Ho}}
\def\dgAlg{\operatorname{dgAlg}}
\def\elemi{%
\begin{tikzpicture}[scale=0.7,baseline=(current bounding box.center)]
\path[draw,fill=lightgray!20!] (2.5,2.0) rectangle (3.5,3.0);
\path[draw,fill=lightgray] (0.0,3.0) rectangle (4.0,4.0);
\path[draw,fill=lightgray] (2.0,1.0) rectangle (6.0,2.0);
\path[draw] (0.5,0.0) -- (0.5,3.0);
\path[draw] (1.0,4.0) -- (1.0,5.0);
\path[draw] (1.5,0.0) -- (1.5,3.0);
\path[draw] (3.0,0.0) -- (3.0,1.0);
\path[draw] (2.5,2.0) -- (2.5,3.0);
\path[draw] (3.5,2.0) -- (3.5,3.0);
\path[draw] (3.0,4.0) -- (3.0,5.0);
\path[draw] (4.5,2.0) -- (4.5,5.0);
\path[draw] (5.0,0.0) -- (5.0,1.0);
\path[draw] (5.5,2.0) -- (5.5,5.0);
\node[anchor=center] at (2.0,3.5) {$\psi$};
\node[anchor=center] at (4.0,1.5) {$\phi$};
\node[anchor=center] at (1.0,1.5) {$\cdots$};
\node[anchor=center] at (5.0,3.5) {$\cdots$};
\node[anchor=center] at (3.0,2.5) {$\cdots$};
\node[anchor=center] at (2.0,4.5) {$\cdots$};
\node[anchor=center] at (4.0,0.5) {$\cdots$};
\path[draw,fill=black] (0.5,3.0) circle [radius=0.05];
\path[draw,fill=black] (1.5,3.0) circle [radius=0.05];
\path[draw,fill=black] (2.5,3.0) circle [radius=0.05];
\path[draw,fill=black] (3.5,3.0) circle [radius=0.05];
\path[draw,fill=black] (2.5,2.0) circle [radius=0.05];
\path[draw,fill=black] (3.5,2.0) circle [radius=0.05];
\path[draw,fill=black] (4.5,2.0) circle [radius=0.05];
\path[draw,fill=black] (5.5,2.0) circle [radius=0.05];
\path[draw,fill=black] (1.0,4.0) circle [radius=0.05];
\path[draw,fill=black] (3.0,4.0) circle [radius=0.05];
\path[draw,fill=black] (3.0,1.0) circle [radius=0.05];
\path[draw,fill=black] (5.0,1.0) circle [radius=0.05];
\end{tikzpicture}%
}%
\def\elemii{%
\begin{tikzpicture}[scale=0.7,baseline=(current bounding box.center)]
\path[draw,fill=lightgray!20!] (2.5,2.0) rectangle (3.5,3.0);
\path[draw,fill=lightgray] (2.0,3.0) rectangle (6.0,4.0);
\path[draw,fill=lightgray] (0.0,1.0) rectangle (4.0,2.0);
\path[draw] (5.5,0.0) -- (5.5,3.0);
\path[draw] (5.0,4.0) -- (5.0,5.0);
\path[draw] (4.5,0.0) -- (4.5,3.0);
\path[draw] (3.0,0.0) -- (3.0,1.0);
\path[draw] (3.5,2.0) -- (3.5,3.0);
\path[draw] (2.5,2.0) -- (2.5,3.0);
\path[draw] (3.0,4.0) -- (3.0,5.0);
\path[draw] (1.0,0.0) -- (1.0,1.0);
\path[draw] (0.5,2.0) -- (0.5,5.0);
\path[draw] (1.5,2.0) -- (1.5,5.0);
\node[anchor=center] at (4.0,3.5) {$\psi$};
\node[anchor=center] at (2.0,1.5) {$\phi$};
\node[anchor=center] at (1.0,3.5) {$\cdots$};
\node[anchor=center] at (5.0,1.5) {$\cdots$};
\node[anchor=center] at (3.0,2.5) {$\cdots$};
\node[anchor=center] at (2.0,0.5) {$\cdots$};
\node[anchor=center] at (4.0,4.5) {$\cdots$};
\path[draw,fill=black] (4.5,3.0) circle [radius=0.05];
\path[draw,fill=black] (5.5,3.0) circle [radius=0.05];
\path[draw,fill=black] (2.5,3.0) circle [radius=0.05];
\path[draw,fill=black] (3.5,3.0) circle [radius=0.05];
\path[draw,fill=black] (2.5,2.0) circle [radius=0.05];
\path[draw,fill=black] (3.5,2.0) circle [radius=0.05];
\path[draw,fill=black] (0.5,2.0) circle [radius=0.05];
\path[draw,fill=black] (1.5,2.0) circle [radius=0.05];
\path[draw,fill=black] (5.0,4.0) circle [radius=0.05];
\path[draw,fill=black] (3.0,4.0) circle [radius=0.05];
\path[draw,fill=black] (3.0,1.0) circle [radius=0.05];
\path[draw,fill=black] (1.0,1.0) circle [radius=0.05];
\end{tikzpicture}%
}%
\def\elemiii{%
\begin{tikzpicture}[scale=0.7,baseline=(current bounding box.center)]
\path[draw,fill=lightgray!20!] (2.5,2.0) rectangle (3.5,3.0);
\path[draw,fill=lightgray] (0.0,3.0) rectangle (6.0,4.0);
\path[draw,fill=lightgray] (2.0,1.0) rectangle (4.0,2.0);
\path[draw] (0.5,0.0) -- (0.5,3.0);
\path[draw] (1.5,0.0) -- (1.5,3.0);
\path[draw] (4.5,0.0) -- (4.5,3.0);
\path[draw] (5.5,0.0) -- (5.5,3.0);
\path[draw] (2.5,0.0) -- (2.5,1.0);
\path[draw] (3.5,0.0) -- (3.5,1.0);
\path[draw] (2.5,2.0) -- (2.5,3.0);
\path[draw] (3.5,2.0) -- (3.5,3.0);
\path[draw] (1.0,4.0) -- (1.0,5.0);
\path[draw] (5.0,4.0) -- (5.0,5.0);
\node[anchor=center] at (3.0,3.5) {$\psi$};
\node[anchor=center] at (3.0,1.5) {$\phi$};
\node[anchor=center] at (1.0,1.5) {$\cdots$};
\node[anchor=center] at (5.0,1.5) {$\cdots$};
\node[anchor=center] at (3.0,0.5) {$\cdots$};
\node[anchor=center] at (3.0,2.5) {$\cdots$};
\node[anchor=center] at (2.0,4.5) {$\cdots$};
\node[anchor=center] at (4.0,4.5) {$\cdots$};
\path[draw,fill=black] (4.5,3.0) circle [radius=0.05];
\path[draw,fill=black] (5.5,3.0) circle [radius=0.05];
\path[draw,fill=black] (2.5,3.0) circle [radius=0.05];
\path[draw,fill=black] (3.5,3.0) circle [radius=0.05];
\path[draw,fill=black] (2.5,2.0) circle [radius=0.05];
\path[draw,fill=black] (3.5,2.0) circle [radius=0.05];
\path[draw,fill=black] (0.5,3.0) circle [radius=0.05];
\path[draw,fill=black] (1.5,3.0) circle [radius=0.05];
\path[draw,fill=black] (5.0,4.0) circle [radius=0.05];
\path[draw,fill=black] (1.0,4.0) circle [radius=0.05];
\path[draw,fill=black] (2.5,1.0) circle [radius=0.05];
\path[draw,fill=black] (3.5,1.0) circle [radius=0.05];
\end{tikzpicture}%
}%
\def\elemiv{%
\begin{tikzpicture}[scale=0.7,baseline=(current bounding box.center)]
\path[draw,fill=lightgray!20!] (2.5,2.0) rectangle (3.5,3.0);
\path[draw,fill=lightgray] (0.0,1.0) rectangle (6.0,2.0);
\path[draw,fill=lightgray] (2.0,3.0) rectangle (4.0,4.0);
\path[draw] (0.5,2.0) -- (0.5,5.0);
\path[draw] (1.5,2.0) -- (1.5,5.0);
\path[draw] (4.5,2.0) -- (4.5,5.0);
\path[draw] (5.5,2.0) -- (5.5,5.0);
\path[draw] (2.5,4.0) -- (2.5,5.0);
\path[draw] (3.5,4.0) -- (3.5,5.0);
\path[draw] (2.5,2.0) -- (2.5,3.0);
\path[draw] (3.5,2.0) -- (3.5,3.0);
\path[draw] (1.0,0.0) -- (1.0,1.0);
\path[draw] (5.0,0.0) -- (5.0,1.0);
\node[anchor=center] at (3.0,3.5) {$\psi$};
\node[anchor=center] at (3.0,1.5) {$\phi$};
\node[anchor=center] at (1.0,3.5) {$\cdots$};
\node[anchor=center] at (5.0,3.5) {$\cdots$};
\node[anchor=center] at (3.0,4.5) {$\cdots$};
\node[anchor=center] at (3.0,2.5) {$\cdots$};
\node[anchor=center] at (2.0,0.5) {$\cdots$};
\node[anchor=center] at (4.0,0.5) {$\cdots$};
\path[draw,fill=black] (4.5,2.0) circle [radius=0.05];
\path[draw,fill=black] (5.5,2.0) circle [radius=0.05];
\path[draw,fill=black] (2.5,3.0) circle [radius=0.05];
\path[draw,fill=black] (3.5,3.0) circle [radius=0.05];
\path[draw,fill=black] (2.5,2.0) circle [radius=0.05];
\path[draw,fill=black] (3.5,2.0) circle [radius=0.05];
\path[draw,fill=black] (0.5,2.0) circle [radius=0.05];
\path[draw,fill=black] (1.5,2.0) circle [radius=0.05];
\path[draw,fill=black] (2.5,4.0) circle [radius=0.05];
\path[draw,fill=black] (3.5,4.0) circle [radius=0.05];
\path[draw,fill=black] (1.0,1.0) circle [radius=0.05];
\path[draw,fill=black] (5.0,1.0) circle [radius=0.05];
\end{tikzpicture}%
}%
\def\Bar{B}
\let\longmapsto\mapsto
\title[$B_\infty$-structure]{The \boldmath{$B_\infty$}-structure on the derived endomorphism algebra of the unit in a monoidal category}
\author{Wendy Lowen} 
\address[Wendy Lowen]{Universiteit Antwerpen, Departement Wiskunde-Informatica, Middelheimcampus,
Middelheimlaan 1,
2020 Antwerp, Belgium}
\address{Laboratory of Algebraic Geometry, National Research University, Higher School of Economics, Moscow, Russia}
\email{wendy.lowen@uantwerpen.be}
\author{Michel Van den Bergh}
\address[Michel Van den Bergh]{Universiteit Hasselt\\ Campus Diepenbeek\\ Agoralaan Gebouw D \\ 3590 Diepenbeek \\Belgium}
\email{michel.vandenbergh@uhasselt.be}
\thanks{The authors acknowledge the support of the Research Foundation Flanders (FWO) under Grant No G.0D86.16N, of the European Union for the ERC grant No 817762 - FHiCuNCAG and of the Russian Academic Excellence Project `5-100'.}
\keywords{monoidal categories, Deligne's conjecture}
\subjclass[2010]{18E99, 18G10, 18G55}
\begin{document}
\maketitle

\begin{abstract}
Consider a monoidal category which is at the same time abelian with enough projectives and such that projectives are flat on the right. We show that there is a $B_{\infty}$-algebra which is $A_{\infty}$-quasi-isomorphic to the derived endomorphism algebra of the tensor unit. This $B_{\infty}$-algebra is obtained as the co-Hochschild complex of a projective resolution of the tensor unit, endowed with a lifted $A_{\infty}$-coalgebra structure. We show that in the classical situation of the category of bimodules over an algebra, this newly defined $B_{\infty}$-algebra is isomorphic to the Hochschild complex of the algebra in the homotopy category of $B_{\infty}$-algebras.
\end{abstract}

\section{Introduction}
Throughout $k$ is a commutative base ring which we will often not mention.
Ever since the pioneering work of Gerstenhaber in the 1960s on the deformation theory
of algebras, the algebraic structure of the Hochschild cohomology
$\operatorname{HH}^{\ast}(A)=\Ext^\ast_{\Bimod(A)}(A,A)$ of an associative algebra $A$ has been a topic of
considerable interest, with the structure of the Hochschild complex $\CC(A)$,
computing this cohomology, entering the picture somewhat later. Whereas
the former becomes a Gerstenhaber algebra when equipped with the cup product and the Gerstenhaber bracket, the latter becomes a
$B_{\infty}$-algebra when endowed with the Hochschild differential, the
cup product and the  brace operations \cite{getzlerjones}. This $B_{\infty}$-structure is a stepping stone
in the construction of an algebra structure on $\CC(A)$ over the chain little disk operad,
a result famously known as Deligne's conjecture, for which
various proofs and generalisations are currently available
\cite{francis,kontsevichsoibelmandeligne,lurieha,mccluresmith,shoikhet1,shoikhet2}.

\medskip

The question whether it is possible in the above results to replace
the category of~$A$-bimodules by more general monoidal categories is a
natural one.  A result on the cohomology level has been obtained in
\cite{hermann} in the context of suitable exact monoidal
categories $(\Ascr,\otimes,I)$. In loc.\ cit.\ Hermann exhibits the existence of a 
``bracket'' on $\Ext^\ast_{\Ascr}(I,I)$ (a binary operation whose desirable properties in general have not actually been established), generalising earlier work by
Schwede \cite{schwedealg} in the algebra case. The bracket is obtained
as a lifting of the Eckmann-Hilton argument for the commutative algebra
$\Ext^\ast_{\Ascr}(I,I)$ to the corresponding extension
categories $\Escr\mathit{xt}_{\Ascr}^\ast(I,I)$. See~\cite{NeemanRetakh}.

\medskip

As a complex (and in fact as a a dg-algebra) $\CC(A)$ computes the derived endomorphism algebra $\REnd_{\Bimod(A)}(A)$ of $A$ as a bimodule and, inspired by this, \cite{shoikhet1,shoikhet2}, 
Shoikhet, using very different methods, generalised the presence of a
$B_{\infty}$-structure to a suitable model for $\REnd_{\Ascr}(I)$ thereby to some extent also recovering and generalising Hermann's result.

\medskip

The main purpose of this paper is to exhibit a different and rather straightforward approach to Shoikhet's results.
Our main result is the following:

\begin{theorem}[see Theorem \ref{th:mainth}] \label{th:mainth:intro}
Let $(\aaa, \otimes , I)$ be a monoidal category which is at the same time abelian.
Assume that $\Ascr$ has enough projectives and that $P\otimes-$ is exact for $P$ projective. Let $\bold{A}=\REnd_{\Ascr}(I)$. Then there exists a $B_\infty$-algebra $\bold{B}$ together
with an $A_\infty$-quasi-isomorphism $\bold{B}\r \bold{A}$.
\end{theorem}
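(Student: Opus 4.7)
The plan is to realise $\mathbf{A}$ explicitly via a projective resolution of $I$, transfer an $A_\infty$-coalgebra structure to this resolution (lifting the trivial coproduct on $I$), form the co-Hochschild complex $\mathbf{B} = \coHoch(P)$ of this $A_\infty$-coalgebra, endow it with a $B_\infty$-structure formally dual to the classical Getzler--Jones $B_\infty$-structure on a Hochschild complex, and finally produce an $A_\infty$-quasi-isomorphism $\mathbf{B} \to \mathbf{A}$.

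\medskip

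First I would choose a projective resolution $p : P \to I$. The assumption that $Q \otimes -$ is exact for every projective $Q$ ensures that for every $n \geq 1$ the tensor power $P^{\otimes n}$ is a resolution of $I^{\otimes n} \cong I$. Hence the canonical isomorphism $I \xrightarrow{\sim} I^{\otimes n}$ lifts to a chain map $\Delta_n : P \to P^{\otimes n}$ by the comparison lemma for projective resolutions. A standard inductive obstruction argument -- with obstructions in degree $n$ living in $\Ext^{<0}_{\mathcal{A}}(I, I^{\otimes n}) = 0$ -- then refines the $\Delta_n$ so that collectively they satisfy the full $A_\infty$-coalgebra relations, the required higher chain homotopies being themselves absorbed into the $\Delta_n$'s with larger $n$.

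\medskip

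Next I would introduce $\mathbf{B} = \coHoch(P)$, the co-Hochschild complex of this $A_\infty$-coalgebra. Concretely this is (a suitable variant of) $\prod_n \Hom_{\mathcal{A}}(P, P^{\otimes n})$ with differential twisted by the $\Delta_n$'s and the internal differential of $P$. The $B_\infty$-structure on $\mathbf{B}$ -- a cup product together with a web of higher braces -- is then defined by formulas that are the exact monoidal-categorical duals of the Getzler--Jones operations on the Hochschild cochain complex of an $A_\infty$-algebra: wherever the classical construction uses an algebra multiplication to contract two adjacent inputs, we use a coproduct $\Delta_n$ to split a factor of $P$. Verification of the $B_\infty$-axioms becomes a formal dualisation of the classical argument, but must now accommodate the non-strictness of the coproduct by including the higher $\Delta_n$'s as correction terms.

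\medskip

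Finally, the $A_\infty$-quasi-isomorphism $\mathbf{B} \to \mathbf{A}$ is assembled from data already at hand: the augmentation $p : P \to I$ induces, via post-composition with $p^{\otimes n}$, a strict chain map $\mathbf{B} \to \Hom_{\mathcal{A}}(P, I) \simeq \mathbf{A}$, whose higher $A_\infty$-components are supplied by the higher coproducts $\Delta_n$. That it is a quasi-isomorphism reduces to the fact that each $P^{\otimes n}$ resolves $I$, so that the summands with $n > 1$ contribute trivially to cohomology once the twisted differential is accounted for. The principal obstacle is the coherent packaging of all of this: one must verify that the $B_\infty$-axioms for $\mathbf{B}$ and the $A_\infty$-morphism axioms for $\mathbf{B} \to \mathbf{A}$ follow simultaneously from the single bundle of $A_\infty$-coalgebra relations on $P$. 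This requires an essentially formal but technically delicate monoidal-categorical analogue of the Getzler--Jones verification, and in my view constitutes the heart of the proof.
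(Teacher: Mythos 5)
Your strategy coincides with the paper's: lift the trivial coalgebra structure on $I$ to an $A_\infty$-coalgebra structure on a projective resolution $P$ by an obstruction argument with obstructions in $\Ext^{<0}_{\Ascr}(I,I)=0$, set $\mathbf{B}=\CC_{\coHoch}(P)$ with its brace-type $B_\infty$-structure twisted by the coproducts (the paper packages this via the properad formalism of Proposition \ref{proppropBinfty} rather than a hand-dualised Getzler--Jones verification, which also sidesteps your worry about checking the $B_\infty$-axioms directly), and then map to $\mathbf{A}$. However, your justification of the decisive step --- that the comparison map is a quasi-isomorphism --- would fail as stated. It is not true that ``the summands with $n>1$ contribute trivially to cohomology'': since $P$ consists of projectives and each $P^{\otimes n}\to I$ is a quasi-isomorphism, \emph{every} column $\Hom_{\Ascr}(P,P^{\otimes n})$ computes $\Ext^\ast_{\Ascr}(I,I)$, which is in general nonzero. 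The actual mechanism (Proposition \ref{prop:mainprop}) is a cancellation between consecutive columns: filtering by the number of outputs, the $E_1$-page has every column equal to $\Ext^\ast_{\Ascr}(I,I)$ and the induced differential $[\xi_2,-]$ alternates between zero and an isomorphism --- a computation that genuinely uses the homotopy counit --- so the $E_2$-page collapses onto the $n=0$ column $\Hom_{\Ascr}(P,I)$. Without the counit input there is no reason for the product of columns to have the cohomology of a single one.

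A second gap concerns the identification $\Hom_{\Ascr}(P,I)\simeq\mathbf{A}$. The map induced by $p$ goes in the direction $\End_{\Ascr}(P)\to\Hom_{\Ascr}(P,I)$, i.e.\ the \emph{wrong} way; to land in $\mathbf{A}$ one needs an explicit $A_\infty$-morphism $\Hom_{\Ascr}(P,I)\to\End_{\Ascr}(P)$. The paper extracts it from the coalgebra structure itself: $P$ is an $A_\infty$-comodule over itself, hence an $A_\infty$-module over $P^\vee=\Hom_{\Ascr}(P,I)$, and the resulting morphism $P^\vee\to\End_{\Ascr}(P)$ is a quasi-isomorphism because its composite with $\epsilon\circ-$ is homotopic to the identity (Corollaries \ref{cor:action} and \ref{cor:quasi}). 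Your remark that the higher components are ``supplied by the $\Delta_n$'' points at this construction but does not resolve the direction-of-arrow problem. Finally, a smaller issue: post-composition with $p^{\otimes n}$ is not a strict chain map for the twisted differential (it intertwines $[\Delta,-]$ only up to the homotopies $(\id\otimes p)\circ\Delta_2\sim\id$); the paper instead uses the projection onto the $n=0$ column, which is a strict $A_\infty$-morphism by \eqref{eq:coAinfty}.
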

The conditions on $\aaa$ in Theorem
\ref{th:mainth:intro} can probably be weakened but we
have made no attempt to do so as we mainly want to emphasise the method of proof. The  conditions we impose are in fact of  similar flavour as those in
\cite{hermann,shoikhet1,shoikhet2}, although the precise relationship
remains to be elucidated.

\medskip

Our starting point is the trivial coalgebra structure present on the
tensor unit~$I$, which can be lifted to an
$A_{\infty}$-coalgebra structure on a projective resolution $P$ of~$I$, see Proposition \ref{propcoalgP} (in the case of bimodule
categories this lifting property was independently established in
\cite{negronvolkovwitherspoon}). The proof of Theorem
\ref{th:mainth:intro} is essentially finished by putting
$$\bold{B} = \CC_{\mathrm{coHoch}}(P) = \prod_n\Sigma^{-n}\Hom_{\aaa}(P, P^{\otimes n}),$$
the co-Hochschild complex associated to $P$.
Here, $\bold{B}$ is endowed with a co-Hoch\-schild differential and co-braces among others, and the resulting $B_{\infty}$-structure is analogous, but not quite dual (due to the product in both constructions) to that on the Hochschild complex of an $A_{\infty}$-algebra. In fact both constructions are special instances of a $B_{\infty}$-algebra obtained from a suitable properad (see Proposition \ref{proppropBinfty} and \cite{merkulovvallette}). 

\medskip

Now let $A$ be a unital $k$-algebra and put  $\aaa = \Bimod(A)$. In this case the classical Hochschild complex $\CC_{\Hoch}(A)$ and the co-Hochschild complex $\CC_{\coHoch}(\tilde{B}A)$ for the Bar $A$-bimodule-resolution $\tilde{B}A \r A$ are of a quite different nature with the former having only non-zero braces and the latter having only non-zero co-braces. Nevertheless, we prove the following:

\begin{theorem}[see Theorem \ref{th:mainth}] \label{mainthintro} Assume that $C$ is a counital dg-coalgebra in $C(\Bimod(A))$ such that $\epsilon:C\r A$ is a quasi-isomorphism.  Assume furthermore that~$A$ is $k$-projective and that $C$ is homotopy projective as complex of bimodules.  There is an isomorphism between $\CC_{\Hoch}(A)$ and $\CC_{\coHoch}(C)$ in the homotopy category of $B_\infty$-algebras.
\end{theorem}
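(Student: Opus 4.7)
My strategy is to reduce to the special case $C = \tilde{B}A$, the standard two-sided bar resolution of $A$ regarded as a counital dg-coalgebra in $C(\Bimod(A))$, and then directly identify $\CC_{\coHoch}(\tilde{B}A)$ with $\CC_{\Hoch}(A)$ in $\Ho(B_\infty)$. The reduction requires showing that the construction $C \mapsto \CC_{\coHoch}(C)$ is suitably functorial in quasi-isomorphisms of dg-coalgebra resolutions, and the identification requires an explicit comparison between the two $B_\infty$-structures.

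\medskip

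\textbf{Step 1 (invariance and reduction to the bar resolution).} First I would establish that if $f\colon C_1 \to C_2$ is a bimodule quasi-isomorphism between two homotopy projective counital dg-coalgebra resolutions of $A$ in $C(\Bimod(A))$, then $f$ lifts to an $A_\infty$-coalgebra quasi-isomorphism (by a version of Proposition~\ref{propcoalgP}), and the induced morphism $\CC_{\coHoch}(C_2) \to \CC_{\coHoch}(C_1)$ (possibly via a short zigzag) is an isomorphism in $\Ho(B_\infty)$. Applying this with $C_1 = C$ and $C_2 = \tilde{B}A$, using the homotopy projectivity of $C$ to produce a bimodule quasi-isomorphism $\tilde{B}A \to C$ (the bar resolution being itself bimodule-projective), reduces the claim to identifying $\CC_{\coHoch}(\tilde{B}A)$ with $\CC_{\Hoch}(A)$.

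\medskip

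\textbf{Step 2 (identification for the bar resolution).} Writing $\tilde{B}A = A \otimes T(A[1]) \otimes A$ and using the adjunction $\Hom_{\Bimod(A)}(\tilde{B}A, M) = \Hom_k(T(A[1]), M)$, one obtains
$$\CC_{\coHoch}(\tilde{B}A) = \prod_n \Sigma^{-n}\Hom_k\bigl(T(A[1]),\,\tilde{B}A^{\otimes_A n}\bigr).$$
The $k$-projectivity of $A$ ensures that $\tilde{B}A^{\otimes_A n}$ is again a bimodule resolution of $A$, so each summand is quasi-isomorphic to $\CC_{\Hoch}(A) = \Hom_k(T(A[1]),A)$ via the augmentation $\tilde{B}A^{\otimes_A n} \to A$. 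I would construct an explicit $B_\infty$-morphism between $\CC_{\coHoch}(\tilde{B}A)$ and $\CC_{\Hoch}(A)$ and check it is a quasi-isomorphism, by verifying that the Hochschild braces (built from multiplication in $A$ and the deconcatenation coproduct on $T(A[1])$) correspond to the co-braces (built from the coproduct on $\tilde{B}A$), and similarly for the cup product and Hochschild differential.

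\medskip

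\textbf{Main obstacle.} The hardest part is the $B_\infty$-comparison in Step 2. The braces of $\CC_{\Hoch}(A)$ and the co-braces of $\CC_{\coHoch}(\tilde{B}A)$ are genuinely of different flavours and fit into the properadic framework of Proposition~\ref{proppropBinfty}. The cleanest treatment probably proceeds by exhibiting both sides as algebras over a single properad acting on the convolution complex $\Hom_k(T(A[1]), \tilde{B}A^{\otimes_A \bullet})$, so that the full $B_\infty$-isomorphism is determined canonically by the underlying dg-algebra structure and one avoids an element-wise combinatorial verification.
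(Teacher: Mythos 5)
Your proposal has two genuine gaps, and together they amount to assuming the theorem rather than proving it. The first is the invariance claim in Step 1. The assignment $C \mapsto \CC_{\coHoch}(C)$ is not functorial in ($A_\infty$-)coalgebra morphisms: a morphism $f\colon C_1 \to C_2$ induces maps into the two-sided object $\prod_n \Hom(C_1, C_2^{\otimes_A n})$, but no direct comparison map between $\CC_{\coHoch}(C_1)$ and $\CC_{\coHoch}(C_2)$ in either direction (this is the exact co-analogue of the fact that an algebra map $A \to B$ does not induce a map between Hochschild complexes). Producing the ``short zigzag'' of $B_\infty$-quasi-isomorphisms you invoke is therefore not a routine lemma; it is essentially the content of the theorem itself, and it is precisely what the paper's semi-co-Hochschild complex --- the coalgebra analogue of Keller's arrow-category construction --- is built to supply. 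The second gap is Step 2. The explicit $B_\infty$-morphism $\CC_{\Hoch}(A) \to \CC_{\coHoch}(\tilde{B}A)$ you propose to construct by matching braces with co-braces does not exist in any known form: the natural candidate $\sigma=(\sigma_0,\sigma_1)$ of \eqref{tildeintro} and \eqref{Hintro} is a morphism of dg Lie algebras but fails to preserve the dot product, hence is not a $B_\infty$-morphism, and upgrading it by higher components is stated in the introduction as conjectural work in progress. Since $\CC_{\Hoch}(A)$ has only non-zero braces and $\CC_{\coHoch}(\tilde{B}A)$ only non-zero co-braces, there is no element-wise correspondence to verify.

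The paper's actual proof avoids both problems and needs no reduction to $\tilde{B}A$: one takes $M=C$ with its canonical $A$-$C$-bimodule structure, so that $(A,M,C)$ carries a semi-coalgebra structure (Definition \ref{def:semico}), and forms the auxiliary $B_\infty$-algebra $\CC_{\scHoch}(A,C,C)$ from the restricted endomorphism properad of \S\ref{sec:restricted_end}. The two projections $\pi_A$ and $\pi_C$ are strict $B_\infty$-morphisms by Proposition \ref{prop:piApiC}, and Theorem \ref{th:conditions} shows they are quasi-isomorphisms under the left and right Keller conditions, which follow from the homotopy projectivity of $C$, the $k$-projectivity of $A$, and the hypothesis that $\epsilon\colon C \to A$ is a quasi-isomorphism (Lemma \ref{lemlem}). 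The resulting zigzag $\CC_{\Hoch}(A) \leftarrow \CC_{\scHoch}(A,C,C) \rightarrow \CC_{\coHoch}(C)$ gives the isomorphism in $\Ho(B_\infty)$. Your closing remark about exhibiting both sides as algebras over a single properad acting on a convolution-type complex is pointing in the right direction --- the middle term $\CC_{\scHoch}(C)=\prod_{m,n}\Hom(A^{\otimes m}\otimes C, C\otimes C^{\otimes_A n})$ of the decomposition \eqref{eq:decomposition} is close to the object you describe --- but to make this work you must build the full properadic gluing and prove both projections are quasi-isomorphisms, which is where all the work lies.
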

Putting $C=\tilde{B}A$ yields:
\begin{corollary}[see Corollary \ref{cor:BA}]\label{maincorintro}
Assume that $A$ is $k$-projective. There is an isomorphism between $\CC_{\Hoch}(A)$ and $\CC_{\coHoch}(\tilde{B}A)$ in the homotopy category of $B_\infty$-algebras, where $\tilde{B}A$
is considered as a coalgebra in $C(\Bimod(A))$.
\end{corollary}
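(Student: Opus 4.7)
The plan is to reduce Corollary \ref{maincorintro} directly to Theorem \ref{mainthintro} by verifying that the bar resolution $C = \tilde{B}A$ satisfies the three hypotheses of that theorem: it carries a counital dg-coalgebra structure in $C(\Bimod(A))$, the augmentation $\epsilon \colon \tilde{B}A \to A$ is a quasi-isomorphism, and $\tilde{B}A$ is homotopy projective as a complex of $A$-bimodules.

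For the coalgebra structure, recall that $\tilde{B}A$ is the classical two-sided bar construction, with degree $-n$ term $A \otimes A^{\otimes n} \otimes A$ and the usual bar differential. It carries the standard deconcatenation coproduct
$$\Delta(a \otimes [a_1|\cdots|a_n] \otimes b) = \sum_{i=0}^{n} \bigl(a \otimes [a_1|\cdots|a_i] \otimes 1\bigr) \otimes_A \bigl(1 \otimes [a_{i+1}|\cdots|a_n] \otimes b\bigr),$$
with counit $\epsilon$ equal to the augmentation $a \otimes [\,] \otimes b \mapsto ab$. That this defines a counital dg-coalgebra in $C(\Bimod(A))$ is classical; the compatibility with the bar differential is a direct sign check. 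Similarly, $\epsilon$ is a quasi-isomorphism via the well-known extra-degeneracy contracting homotopy on the augmented bar complex.

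It remains to verify homotopy projectivity. Since $A$ is $k$-projective, so is $A^{\otimes n}$ for every $n$, and hence each term $A \otimes A^{\otimes n} \otimes A$ is a projective $A$-bimodule. As $\tilde{B}A$ is in addition bounded above (concentrated in non-positive cohomological degrees), it is K-projective in $C(\Bimod(A))$ by a standard argument. With all hypotheses verified, Theorem \ref{mainthintro} applies to $C = \tilde{B}A$ and yields the desired isomorphism in the homotopy category of $B_\infty$-algebras. Since the corollary is essentially a specialisation of the main theorem, there is no genuine obstacle beyond these verifications, and the $k$-projectivity assumption on $A$ enters precisely to guarantee homotopy projectivity of $\tilde{B}A$.
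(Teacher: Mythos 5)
Your proposal is correct and follows essentially the same route as the paper: Corollary \ref{maincorintro} is obtained by specialising Theorem \ref{mainthintro} to $C=\tilde{B}A$, with the counital dg-coalgebra structure and the quasi-isomorphism $\epsilon$ supplied by the construction in \S\ref{parcoalg2} and Lemma \ref{lem:counitquasi}, and homotopy projectivity coming from $k$-projectivity of $A$ exactly as you argue. The only cosmetic difference is that the paper obtains the deconcatenation coproduct via the oplax monoidal functor $L=A\otimes-\otimes A$ applied to the cofree coalgebra $\Bar A$ rather than writing it down directly, but the resulting structure is the same.
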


In order to appreciate these results, first note that for $C = \tilde{B}A$ the Hochschild complex is naturally contained inside the co-Hochschild complex as ``first column'', through the cano\-nical isomorphism 
\begin{equation}\label{tildeintro}
\sigma_0: \CC_{\Hoch}(A) \cong \Hom_{A-A}(\tilde{B}A, A): \phi \longmapsto \tilde{\phi}
\end{equation}
which is given by freely extending $\phi \in \Hom_k(A^{\otimes n}, A)$ to $\tilde{\phi} \in \Hom_{A-A}(A \otimes A^{\otimes n} \otimes A, A)$. 
Further, the well known isomorphism
$$\CC_{\Hoch}(A) \cong \mathrm{Coder}_k(BA)$$
from \cite{quillencyclic,stasheffbracket} can be naturally extended to yield
\begin{equation}\label{Hintro}
\sigma_1: \CC_{\Hoch}(A) \lra \Sigma^{-1}\Hom_{A-A}(\tilde{B}A, \tilde{B}A)
\end{equation}
and a computation shows $\sigma = (\sigma_0, \sigma_1)$ to define a morphism of dg Lie algebras $\sigma: \CC_{\Hoch}(A) \lra \CC_{\coHoch}(\tilde{B}A)$, quasi-inverse to the projection onto the first column.
Note that in the recent work \cite{negronvolkovwitherspoon}, the authors prove the existence of an isomorphism $$HH^{\ast}(A) \cong \Sigma \mathrm{Coder}_{A-A}^{\infty}(P)/\mathrm{Inn}_{A-A}^{\infty}(P)$$ with the shifted module of outer $A_{\infty}$-coderivations of an arbitrary bimodule resolution $P$ of $A$. For $P = \tilde{B}A$, the $A_{\infty}$-coderivation corresponding to a Hochschild cocycle $\phi$ has only two components, given precisely by $(\sigma_0(\phi), \sigma_1(\phi))$.

Unfortunately, the dg Lie morphism $\sigma$ does not preserve the dot product so it fails to be a $B_{\infty}$-quasi-isomorphism. Conjecturally, adding on higher components to $\sigma$ will allow the definition of a morphism on the level of the respective Bar constructions. This is work in progress.

\medskip

 In the present paper we develop an altogether different approach to the comparison of $B_{\infty}$-structures, inspired by Keller's arrow category construction for dg categories from \cite{keller}.
 In order to compare Hochschild and co-Hochschild complexes as in Theorem \ref{mainthintro}, we introduce the notion of a semi-coalgebra (Definition \ref{def:semico}) as the ``glueing'' of an algebra $A$ and a coalgebra $C$ along suitable bimodules $M$ and $N$ in both directions. In analogy with the definitions of $A_{\infty}$-algebra and -coalgebra structures, a semi-co-algebra structure is defined as a solution of the Maurer-Cartan equation in $\CC_{\scHoch}(A,M,C,N)$, the \emph{semi-co-Hochschild object}. This $B_{\infty}$-algebra is obtained through Proposition \ref{proppropBinfty} from the \emph{restricted endomorphism properad} described in \S \ref{sec:restricted_end}. In the special case where $A$ is a $k$-algebra, $C$ is a coalgebra in $C(\Bimod(A))$ and $N = 0$, in \S \ref{sec:conditions} we formulate conditions ensuring that the two canonical $B_{\infty}$ projections $\CC_{\scHoch}(A,M,C,N) \r \CC_{\Hoch}(A)$ and $\CC_{\scHoch}(A,M,C,N) \r \CC_{\coHoch}(C)$ are quasi-isomorphisms, and a forteriori that $\CC_{\Hoch}(A) \cong \CC_{\coHoch}(C)$ in $\mathrm{Ho}(B_{\infty})$. Finally, Theorem \ref{mainthintro} is obtained as an application to $M = C$.

\section{Notation and conventions}
\label{sec:notconv}
Throughout $k$ is a commutative ring. Unless otherwise specified objects and categories will be $k$-linear.
We will first discuss our sign conventions.
If $\Ascr$ is a graded category (i.e.\ a category enriched in graded $k$-modules) then a \emph{suspension} of $A\in \Ob(\Ascr)$ is 
a pair $(A',\eta)$ where $A'\in \Ob(\Ascr)$ and $\eta:A\r A'$ is an isomorphism of degree $-1$. The dual notion of \emph{desuspension}
is defined similarly. If $\Ascr$ is a dg-category then we require $\eta$ to be closed as well.

If every object in $\Ascr$ has a suspension $(A',\eta_A)$ then we can define an endofunctor
$\Sigma:\Ascr\r \Ascr$
which on objects takes the value $\Sigma A=A'$ and which on maps $f:A\r B$
is given by\footnote{The sign in this equation is dictated by the fact that if $\Ascr$ is a dg-category then we want $\Sigma$ to be a dg-functor. 
In particular, since $\Sigma$ has degree zero,
we want $d(\Sigma f)=\Sigma df$.}
\begin{equation}
\label{diag:basiccommutativity}
\Sigma f=(-1)^{|f|}\eta_B\circ f\circ \eta_A^{-1}
\end{equation}
We call $\Sigma$ a \emph{shift functor}.\footnote{Note that in general
  the shift functor is not canonical, despite being unique up to
  unique isomorphism. This ambiguity in the choice of a shift functor
  sometimes cause sign complications which maybe avoided by phrasing statements in terms
  of suspensions. In \cite{RizzardoVdB} the axioms
  of triangulated categories were reformulated in such a way that they
  do not refer to a shift functor.} Note that it is easy to formally
close a graded category under suspensions and desuspensions by
adjoining the appropriate morphisms and objects.

If $\Bscr$ is an abelian category then $G(\Bscr)$ and $C(\Bscr)$ denote respectively the 
categories of $\Z$-graded objects $(A_i)_{i\in \Z}$ 
and $\Z$-indexed complexes $(A_i,d_i{:}A_i{\r} A_{i+1})_{i\in \Z}$ over~$\Bscr$.

The categories $G(\Bscr)$, $C(\Bscr)$ are both
equipped with a canonical shift functor which on objects
acts as $(\Sigma A)_i=A_{i+1}$ and (in the case of $C(\Bscr)$) $(\Sigma d)_i=-d_{i+1}$ and for which $\eta_i:A_i\r (\Sigma A)_{i-1}=A_i$ is the identity. If
$f:A\r B$ is a graded morphism of degree $|f|$ then
one checks, using \eqref{diag:basiccommutativity},
that $(\Sigma f)_i$, which is a morphism $A_{i+1}=(\Sigma A)_i\r (\Sigma B )_{i+|f|}=B_{i+|f|+1}$, is
given by $(-1)^{|f|}f_{i+1}$.

If $R$ is a ring then we write $G(R)$, $C(R)$ for $G(\Mod(R))$, $C(\Mod(R))$. In that case
we write $sr:=\eta(r)$.

\section{Properads}\label{sec:properads}
Throughout, $\sss$ will be either the
category $G(k)$ of graded $k$-modules or the category $C(k)$ of
complexes of $k$-modules. We will consider properads in the
symmetric monoidal category $\sss$.  In this section, we collect the
necessary preliminaries on properads and associated
$B_{\infty}$-structures.
\subsection{Generalities}
\label{sec:generalities}
We will assume that the reader is familiar with prop-type structures and in particular
properads (possibly coloured). Nonetheless we give a quick introduction to them, mainly to
fix notations. 

We will first consider the case of an asymmetric properad $\ooo$ in $\sss$.
We denote sequences of numbers by $\underline{m} = (m_1, \dots, m_s)$ and we put $\lambda(\underline{m}) = s$ and $|\underline{m}| = \sum_{i = 1}^s m_i$.
Consider sequences $\underline{m}$, $\underline{n}$, $\underline{k}$, $\underline{l}$ with $\lambda(\underline{m}) = \lambda(\underline{n}) = s$ and $\lambda(\underline{k}) = \lambda(\underline{l}) = t$ and $|\underline{l}| = |\underline{m}|$. In an asymmetric properad we have compositions
\begin{equation}\label{propcomp}
\circ = \circ^{\underline{m}, \underline{n}}_{\underline{k}, \underline{l}}: \bigotimes_{i = 1}^s \ooo(m_i, n_i) \otimes \bigotimes_{j = 1}^t \ooo(k_j, l_j) \lra \ooo(|\underline{k}|, |\underline{n}|)
\end{equation}
whenever the planar graph connecting the $|l|$ outputs to the $|m|$ inputs is connected.
Moreover $\Oscr(1,1)$ contains a specific ``identity'' constant denoted by ``$\Id$''. Using $\Id$ it is easy to see that every 
composition in $\Oscr$ can be rewritten in terms of ``\emph{elementary compositions}'' of the type
\[
(\id,\ldots,\id,\phi,\id,\ldots,\id)\circ (\id,\ldots,\id,\psi,\id,\ldots,\id)
\]
Topological constraints lead to 4 different types of elementary compositions which we give in Figure \ref{fig:watson}.
\begin{figure}[h]
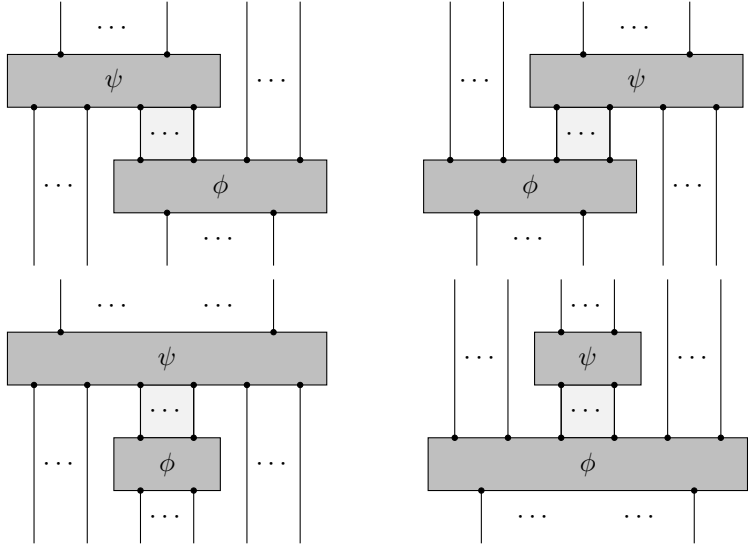

\begin{tabular}{cp{0.5cm}c}
\elemi
&&
\elemii
\\[1.8cm]
\elemiii
&&\
\elemiv
\end{tabular}
\caption{Possible elementary compositions for asymmetric properads.}
\label{fig:watson}
\end{figure}
\begin{example} If $\Oscr$ is an operad (i.e. $\Oscr(m,n)=0$ for $n\neq 1$) then the elementary compositions
are usually denoted by $\circ_i$ where $\phi \circ_i \psi$ means ``insert the evaluation of $\psi$ into $\phi$ as  its $i$'th argument and do not change the other arguments''.
\end{example}
We define the \emph{connection arity} of an elementary compositions as
the number of internal connections (those indicated in light grey in
Figure \ref{fig:watson}). The connectedness requirement for a properad implies that the connection arity is always
$\ge 1$. We say that a properad has \emph{bounded connectivity} if
there exists $p\ge 1$ such that all elementary operations with
connection arity $>p$ are zero.

\medskip

We will also consider \emph{coloured asymmetric properads}. If $\xxx$ is the colour set
we now have $\sss$-objects
$\ooo(x,y)$ for $x = (X_1, \dots, X_k)$ and $y = (Y_1, \dots, Y_n)$ with $X_i, Y_j \in \xxx$. 
The compositions $\circ$ from \eqref{propcomp} have coloured counterparts $\circ_{\underline{u}, \underline{v}}^{\underline{x}, \underline{y}}$ where $\underline{m} = (m_1, \dots, m_s)$ is replaced by $$\underline{x} = ((X^1_1, \dots, X^1_{m_1}), \dots, (X^s_1, \dots, X^s_{m_s}))$$ for colours $X^i_j$ and $Y^i_j$. We put $\lambda(\underline{x}) = s$, $|\underline{x}| = (X^1_1, \dots, X^1_{m_1}, \dots, X^s_1, \dots, X^s_{m_s})$
and $||\underline{x}|| = \sum_{i =1}^s m_i$. There is now an identity $\Id_X\in \Oscr(X,X)$ for every $X\in \Xscr$. Elementary compositions can be defined as in Figure \ref{fig:watson} and the notions
related to it generalise without difficulty to the coloured context.

\begin{convention}
Below we will write ``properad'' for ``asymmetric properad''. 
\end{convention}
\subsection{Shifted properads and signs}
\label{sec:shifted}

If $\Oscr=(\Oscr(m,n))_{m,n}$ is a properad then we frequently need its \emph{shifted versions}
$
\Xi^a \Oscr
$. Those are such that any algebra over $\Xi^a\Oscr$ is of the form $\Sigma^a A$ with $A$ an algebra over $\Oscr$. Checking degrees we find that we should have
\[
(\Xi^a \Oscr)(m,n)=\Sigma^{a(n-m)} (\Oscr(m,n))
\]
To obtain the correct signs in compositions\footnote{The signs are caused by the need to linearise expressions.}  we can write informally
\begin{equation}
\label{eq:shiftedprop}
(\Xi^a \Oscr)(m,n)=(\underbrace{\eta\otimes \cdots \otimes \eta}_n )^a\circ 
\Oscr(m,n)\circ (\underbrace{\eta\otimes \cdots \otimes \eta}_m)^{-a}
\end{equation}
where $\eta$ symbolises suspension (see \S\ref{sec:notconv}) and 
where the $\eta$-expressions can be rewritten using the Koszul sign convention. E.g.
\[
(\eta^{\otimes n})^a
=(-1)^{n(n-1)/2+a(a-1)/2}
(\eta^a)^{\otimes n}\,.
\]
So if $f\in \Oscr(m,n)$ then we have an corresponding operation $\Xi^a f\in (\Sigma^a \Oscr)(m,n)$ of degree $|f|+a(m-n)$
which can be informally written with a formula\footnote{
The sign in \eqref{eq:shiftedprop1} is to ensure that $\Xi:\Oscr(m,n)\r (\Xi\Oscr)(m,n)$ is a morphism of complexes (of degree
$(m-n)$).
It is possible to introduce an additional ``global'' sign in \eqref{eq:shiftedprop1} (i.e.\ depending on $m,n$ but not on $f$) but there is not really any canonical 
choice.
}
 similar to \eqref{diag:basiccommutativity}:
\[
\Xi^a f=\Xi(\Xi\cdots(\Xi f)\cdots)
\]
and
\begin{equation}
\label{eq:shiftedprop1}
\Xi f=(-1)^{m|f|} (\eta^{\otimes n})\circ f\circ (\eta^{\otimes m})^{-1}.
\end{equation}
As an example, one verifies
\[
\Xi^{-1}f=(-1)^{(n+1+|f|)m}(\eta^{\otimes n})^{-1}\circ f\circ (\eta^{\otimes m}).
\]

Note that this works as in any graph defining a composition in $\Xi^a \Oscr$ the inner $\eta$'s will cancel
and we obtain an expression of the same form as \eqref{eq:shiftedprop1}.
\begin{remark}
\label{rem:shifts}
It is possible to make
\eqref{eq:shiftedprop}\eqref{eq:shiftedprop1} mathematically rigorous by introducing colours
(see above). E.g. to obtain $\Xi\Oscr$ we introduce an extra colour
$(-)'$ and we write a sequence of $n$ in/outputs in that colour as $n'$.
We then freely adjoin to $\Oscr$ a two sided invertible
element $\eta\in \Oscr(1,1')_{-1}$.
If the resulting properad is denoted by $\tilde{\Oscr}$
then $(\Xi \Oscr)(m,n)=\tilde{\Oscr}(m',n')$. To obtain $\Xi^2\Oscr$ one introduces 
yet another colour $(-)''$ as well as an invertible element $\eta'\in \Oscr(1',1'')$ of degree $-1$.
The notation $\eta^2$ then stands for $\eta'\circ \eta$. Etc\dots
\end{remark}
\begin{example}\label{ex:assoc}
  Let $\Oscr$ be the associative operad and let $m_2\in \Oscr(2)$ be
  the multiplication. Put
  $b_2:=\Xi m_2=\eta\circ m_2\circ (\eta\otimes \eta)^{-1}=-\eta\circ
  m_2\circ (\eta^{-1}\otimes \eta^{-1})$. Then $|b_2|=1$ and
\begin{align*}
b_2\circ (b_2\otimes \Id)&=\eta\circ m_2 \circ (\eta^{-1}\otimes \eta^{-1}) \circ (\eta\circ m_2\circ(\eta^{-1}\otimes \eta^{-1})\otimes \Id) \\
&=-\eta\circ m_2 \circ (m_2\circ(\eta^{-1}\otimes \eta^{-1})\otimes \eta^{-1}) \\
&=-\eta\circ m_2\circ (m_2\otimes \Id)\circ (\eta^{-1}\otimes \eta^{-1}\otimes \eta^{-1})\\
&=\eta\circ m_2\circ (m_2\otimes \Id)\circ (\eta\otimes \eta\otimes \eta)^{-1}
\end{align*}
On the other hand
\begin{align*}
b_2\circ (\Id\otimes b_2)&=\eta\circ m_2 \circ (\eta^{-1}\otimes \eta^{-1}) \circ (\Id\otimes \eta\circ m_2\circ(\eta^{-1}\otimes \eta^{-1})) \\
&=\eta\circ m_2 \circ (\eta^{-1}\otimes m_2\circ(\eta^{-1}\otimes \eta^{-1})) \\
&=\eta\circ m_2\circ (\Id\otimes m_2)\circ (\eta^{-1}\otimes \eta^{-1}\otimes \eta^{-1})\\
&=-\eta\circ m_2\circ (\Id \otimes m_2)\circ (\eta\otimes \eta\otimes \eta)^{-1}
\end{align*}
so that we obtain in particular the well-known identity
\[
b_2\circ (b_2\otimes \Id)+b_2\circ(\Id\otimes b_2)=0
\]
\end{example}
The above sign conventions extend without difficulty to the case where
$\Oscr$ is a coloured properad. Let $\Xscr$ be the colour set
and assume we  have a weight function
$$a: \xxx \lra \Z: X \longmapsto a_X.$$
We define the weighted versions $\Xi^a\ooo$ of $\ooo$ via $$\Xi^a \ooo(X_1, \dots X_m; Y_1, \dots, Y_n) = \Sigma^{(\sum_i a_{Y_i} - \sum_j a_{X_j})}\ooo(X_1, \dots X_m; Y_1, \dots, Y_n).$$ 

For every colour $X\in \Xscr$ we have an associated ``colour shift''
operator $\Xi_X$ with a formula similar to \eqref{eq:shiftedprop1}
where (de)suspensions are now only inserted in the in(out)puts
corresponding to the colour $X$. Note that the colour shift operators
only commute up to a sign.
\begin{example}
\label{ex:weighted_end}
For objects $V = (V_X)_{X\in \xxx}$  in a monoidal category $\Mscr$ enriched in $\sss$, we may consider the coloured endomorphism properad $\uEnd_{V}$ with
\[
\uEnd_{V}(X_1, \dots X_m; Y_1, \dots, Y_n)= \Mscr(V_{X_1} \otimes \dots \otimes V_{X_m}, V_{Y_1} \otimes \dots \otimes V_{Y_n}).
\]
If $\Mscr$ is closed under suspensions and desuspensions then
the definition of the weighted properad is such that 
$\Xi^a \uEnd_V \cong \uEnd_{V'}$ with $V'_X = \Sigma^{a_X}V_X$.

Below we will use a slight generalisation where $\B=\Mscr$ is a bicategory and where the $V_X$ are $1$-morphisms in $\B$ such that in addition the $\Hom$ in the definition of $\uEnd_V$ is restricted 
to composable $1$-morphisms (see \S\ref{sec:restricted_end} below). 
\end{example}

\subsection{$B_\infty$-algebras}
\begin{definition} Let $V$ be a graded vector space and denote by
  $T^c(V)$ its cotensor coalgebra, viewed as a coaugmented coalgebra  (with counit). A \emph{$\Xi B_\infty$-structure} on~$V$ is a
  dg-bialgebra structure $(T^c (V),\Delta,\epsilon,m,1,Q)$ on $T^c(V)$
  where the first half $(T^c(V),\Delta,\epsilon)$ is the standard coalgebra structure
  and $1\in T^c(V)$ is obtained from the coaugmentation.
\end{definition}
A $\Xi B_\infty$-structure is determined by operations
\begin{equation}
\label{eq:brace}
m^{s,t}:V^{\otimes s} \otimes V^{\otimes t}\r V
\end{equation}
of degree zero and operations
\begin{equation}
\label{eq:Q}
Q^r:V^{\otimes r} \r V
\end{equation}
of degree one. These operations should satisfy the additional conditions $Q^0=0$, $m^{0,1}=m^{1,0}=\Id_V$
and $m^{0,r}=m^{r,0}=0$ for $r\neq 1$. We may obtain $m$ and $Q$ from \eqref{eq:brace} and \eqref{eq:Q} using standard formulas:
\begin{align}
Q&=\sum_{p,q,r} \Id^{\otimes p}\otimes Q^r\otimes \Id^{\otimes q}\\\
m
&=\sum_{\sum_i s_i=s,\sum_j t_j=t} (m^{s_1,t_1}\otimes\cdots\otimes m^{s_u,t_u} )\circ \sigma^{s,t}_{(s_1,t_1)\cdots(s_u,t_u)}\label{eq:mdef}
\end{align}
where $\sigma^{s,t}_{(s_1,t_1)\cdots(s_u,t_u)}$ is the signed permutation which rearranges
\[
(v_1\otimes\cdots \otimes v_s)\otimes (w_1\otimes\cdots \otimes w_t)
\]
as
\begin{multline}
\label{eq:rearranging}
((v_1\otimes\cdots\otimes v_{s_1})\otimes (w_1\otimes\cdots \otimes w_{t_1}))
\otimes\cdots\\\cdots \otimes\
((v_{s_1+\cdots+s_{u-1}}\otimes\cdots\otimes v_{s})\otimes (w_{t_1+\cdots+t_{v-1}}\otimes\cdots \otimes w_{t})).
\end{multline}
We allow $s_i$ and $t_j$ to be zero in which case the
corresponding factor in \eqref{eq:rearranging} is~$1$.  Note that because of the conditions
on $m^{0,r}$, $m^{r,0}$ this is a rather degenerate situation.
\begin{remark} The standard definition of a bialgebra requires an underlying
\emph{symmetric} monoidal category. It follows that
 $B_\infty$-algebras can only be
defined in symmetric monoidal categories.
\end{remark}
\begin{remark} \label{rem:Ainfty} The biderivation $Q$ induces in particular a $\Xi A_\infty$ structure on $V$. This yields a morphism
of operads
\[
A_\infty\r B_\infty
\]
\end{remark}
We define the \emph{dot product} as.
\[
v\bullet w=m^{1,1}(v,w)
\]
\begin{example} \label{ex:lie}
One computes for $v,w\in V$
\begin{equation}
\label{eq:prelie}
m(v,w)=v\otimes w+(-1)^{|v||w|} w\otimes v+v\bullet w
\end{equation}
In particular the Lie bracket $[-,-]=m-m\circ \sigma_{12}$ restricts to a Lie bracket on $V$ defined
by
\[
[v,w]=v\bullet w-(-1)^{|v||w|} w\bullet v
\]
We obtain in particular a morphism of operads
\[
\Xi^{-1}\operatorname{Lie} \r B_\infty
\]
\end{example}
\begin{remark}
\label{rem:preLie}
The dot product itself is not associative. However it defines a so-called \emph{pre-Lie} structure on $V$.
I.e.\ one may show, using similar expressions as \eqref{eq:prelie} that the ``associator'' $(u\bullet v)\bullet w-u\bullet (v\bullet w)$ is anti-symmetric in $v,w$.
\end{remark}
$\Xi B_\infty$-algebras admit a natural version of twisting. Let $V$ be a
$\Xi B_\infty$-algebra and assume $\xi \in V$.

Then $\xi$ is a primitive
element and from this one deduces that $[\xi,-]$ (see Example \ref{ex:lie}) is both a derivation and a coderivation.
Consider the
\emph{Maurer-Cartan equation}
\begin{equation} \label{eqMC}
Q^1(\xi)+\xi\bullet \xi=0.
\end{equation}
\begin{proposition}\label{proptwist}
If $\xi \in V^1$ is a solution to the Maurer-Cartan equation \eqref{eqMC}, then changing $Q$ into
\begin{equation}
\label{eq:twist}
Q_\xi:=Q+[\xi,-]
\end{equation}
defines a new $\Xi B_\infty$-structure on $V$.
\end{proposition}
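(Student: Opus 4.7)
The plan is to show that $Q_\xi = Q + [\xi,-]$ again defines a dg-bialgebra structure on $T^c(V)$ with the unchanged $(\Delta,\epsilon,m,1)$. Most of the work is done by the observation just preceding the statement: since $\xi \in V$ is primitive in $T^c(V)$, the operator $[\xi,-]$ is automatically both a coderivation of $\Delta$ (from $\Delta(\xi) = \xi \otimes 1 + 1 \otimes \xi$) and a derivation of $m$ (as an inner graded commutator), both of degree one. As $Q$ is a biderivation of degree one by hypothesis, $Q_\xi$ is a biderivation of degree one as well. The only remaining point is the differential equation $Q_\xi^2 = 0$.

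Since both $Q$ and $[\xi,-]$ are of degree one, expansion together with $Q^2 = 0$ gives
\[
Q_\xi^2 \;=\; [Q,[\xi,-]] \;+\; [\xi,-]^2,
\]
where the bracket in the middle is the graded commutator of operators, which for two odd operators reduces to the anticommutator $Q\circ[\xi,-] + [\xi,-]\circ Q$. The strategy is to compute the two summands separately and match them with the two sides of the Maurer-Cartan equation \eqref{eqMC}.

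For the first term, I would use that $Q$ is a graded derivation of $m$ to verify, by a short direct calculation, the inner-derivation identity $[Q,[\xi,-]] = [Q(\xi),-]$. Since $\xi \in V$ and $Q^0 = 0$, the coderivation $Q$ applied to $\xi$ reduces to its corestriction $Q^1(\xi) \in V$. For the second term, direct expansion with $|\xi| = 1$ yields
\[
[\xi,[\xi,x]] \;=\; (\xi\xi)\,x \,-\, x\,(\xi\xi) \;=\; [\xi\xi,\,-](x),
\]
where juxtaposition denotes the associative bialgebra product $m$ on $T^c(V)$ (no factor of $\tfrac12$ is needed). Specialising \eqref{eq:prelie} to $v = w = \xi$ yields
\[
\xi\xi \;=\; m(\xi,\xi) \;=\; \xi \otimes \xi + (-1)^{1\cdot 1}\xi \otimes \xi + \xi \bullet \xi \;=\; \xi \bullet \xi,
\]
the two tensor contributions cancelling precisely because $|\xi|$ is odd. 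Assembling everything,
\[
Q_\xi^2 \;=\; [Q^1(\xi) + \xi \bullet \xi,\, -] \;=\; 0
\]
by the Maurer-Cartan equation \eqref{eqMC}.

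The only real obstacle is sign bookkeeping throughout, most prominently in verifying the inner-derivation identity $[Q,[\xi,-]] = [Q(\xi),-]$. Conceptually the argument is just the familiar twisting of a dg bialgebra by a Maurer-Cartan element, with the mild simplification that the quadratic term of MC collapses from $T^c(V)$ to $V$ via $\bullet$ thanks to the odd degree of $\xi$.
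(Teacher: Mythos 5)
Your proof is correct and is exactly the argument the paper intends: the remark preceding the proposition (primitivity of $\xi$ makes $[\xi,-]$ a biderivation) plus the identities $[Q,[\xi,-]]=[Q^1(\xi),-]$ and $[\xi,[\xi,-]]=[\xi\bullet\xi,-]$, with the Maurer--Cartan equation killing $Q_\xi^2$. The paper leaves these verifications to the reader, so you have simply filled in the details, including the correct observation that no factor of $\tfrac12$ is needed.
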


We will call the $\Xi B_{\infty}$-structure obtained in Proposition \ref{proptwist} the $\Xi B_\infty$-structure  \emph{twisted} by $\xi$.

\subsection{\boldmath The $\Xi B_\infty$-structure on properads}\label{parprop}
\label{sec:binftyproperads}

Let $\ooo$ be an asymmetric properad in~$\sss$ as in \S\ref{sec:generalities}.
The compositions in $\ooo$ give rise to two sided brace operations (called LR operations in \cite{merkulovvallette}) where the condition $|\underline{l}| = |\underline{m}|$ is dropped:
$$B = B^s_t = B^{\underline{m}, \underline{n}}_{\underline{k}, \underline{l}}: \bigotimes_{i = 1}^s \ooo(m_i, n_i) \otimes \bigotimes_{j = 1}^t \ooo(k_j, l_j) \lra \oplus_{k, n}\ooo(k, n)$$
with
\begin{multline}
\label{eq:moperations} 
B^s_t(\phi_1,\ldots,\phi_s,\psi_1,\ldots,\psi_t)
=
\sum (\Id^{\otimes a_0} \otimes \phi_1\otimes \Id^{\otimes a_1}
\otimes \cdots \otimes \Id^{\otimes a_{s-1}}\otimes \phi_s
\otimes  \Id^{\otimes a_{s}})\\
\circ
(\Id^{\otimes b_0} \otimes \psi_1\otimes \Id^{\otimes b_1}
\otimes \cdots \otimes \Id^{\otimes b_{t-1}}\otimes \psi_t\otimes \Id^{\otimes b_{t}}
)
\end{multline}
where the sum runs over all legal compositions (i.e.\ planar connected graphs).

Note that, in order for the total graph to be connected, every identity $\Id \in \ooo(1,1)$ 
that is inserted among the $\phi_i$ has to be connected to one of the $\psi_j$, and every identity inserted among the $\psi_j$ has to be connected to one of the $\phi_i$. Hence, the total number of identities is bounded by $|\underline{l}| + |\underline{m}|$ and the sum in \eqref{eq:moperations} is finite and more precisely lands in 
$$\oplus_{k = |\underline{k}|}^{|\underline{k}| + |\underline{m}|} \oplus_{n = |\underline{n}|}^{|\underline{n}| + |\underline{l}|} \ooo(k, n).$$

Consider the $\sss$-objects
\begin{align*}
\CC_{\oplus}(\ooo) &= \bigoplus _{k, n}\ooo(k, n)\\
\CC_{\Pi}(\ooo) &= \prod_{k, n}\ooo(k, n)
\end{align*}
\begin{proposition}\cite[Prop. 9]{merkulovvallette} \label{proppropBinfty}
For a properad $\ooo$, $\CC_{\oplus}(\ooo)$ has the structure of a $\Xi B_{\infty}$-algebra. If $\Oscr$ has bounded connectivity then this also holds for $\CC_{\Pi}(\ooo)$.
\end{proposition}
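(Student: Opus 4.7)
The plan is to construct the $\Xi B_\infty$-structure directly from the LR operations of \eqref{eq:moperations}. Concretely, I would set the brace operations $m^{s,t}$ on $V = \CC_{\oplus}(\ooo)$ (respectively $\CC_{\Pi}(\ooo)$) to be the $B^s_t$, modulo the sign adjustments dictated by the shift $\Xi$. To keep signs unambiguous, I would work with the rigorous colored-properad formulation of Remark \ref{rem:shifts}, in which adjoining a two-sided invertible element $\eta$ of degree $-1$ realises $\Xi\ooo$ as a genuine sub-object of a larger coloured properad. The differential $Q^1$ is induced by the internal differential of $\ooo$ (trivial when $\sss = G(k)$), and $Q^r = 0$ for $r\neq 1$.

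First I would check that the operations are well-defined. For $\CC_{\oplus}(\ooo)$ this is immediate from the observation already made in the text that the image of $B^s_t$ lies in the \emph{finite} sum $\bigoplus_{k=|\underline{k}|}^{|\underline{k}|+|\underline{m}|}\bigoplus_{n=|\underline{n}|}^{|\underline{n}|+|\underline{l}|}\ooo(k,n)$. For $\CC_{\Pi}(\ooo)$, evaluating $m^{s,t}$ on general elements involves \emph{a priori} infinite sums. Here bounded connectivity enters: if all elementary compositions of connection arity $>p$ vanish, then an iterated composition built from $s+t$ factors can be nonzero only when each elementary step has connection arity $\leq p$, so for fixed target arities $(k,n)$ only finitely many tuples of arities of the inputs can contribute. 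Next I would verify the unit and vanishing axioms. The identity $m^{0,1} = m^{1,0} = \Id_V$ comes from the fact that the only legal connected graph with no $\phi$'s and one $\psi$ (or vice versa) is $\psi$ (or $\phi$) itself. The vanishing $m^{0,r} = m^{r,0} = 0$ for $r\neq 1$ follows because a disjoint union of $r>1$ operations without partners cannot be connected, so every term in the sum is excluded; $Q^0 = 0$ is automatic.

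The core of the proof is the verification of the bialgebra axioms for $(T^c(V),\Delta,\epsilon,m,1,Q)$. Associativity of $m$ on $T^c(V)$ reduces, via \eqref{eq:mdef}, to the associativity of properadic composition applied to the connected planar graphs obtained by nesting the two layers of insertions. The key nontrivial point is compatibility of $m$ with the deconcatenation coproduct $\Delta$: this amounts to showing that inserting two ordered layers $(\phi_\bullet)$ and $(\psi_\bullet)$ into one another in all legal ways and then deconcatenating the output is equivalent to first splitting each layer into two blocks and then composing the blocks on the left with those on the right. Combinatorially this is a double-counting bijection on pairs (legal graph, horizontal cut of its outputs), since every such pair corresponds uniquely to a pair of legal compositions of the split factors. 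Finally, that $Q$ is a biderivation follows from the Leibniz rule for the internal differential of $\ooo$ with respect to properadic composition.

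The main obstacle is the sign bookkeeping under the shift $\Xi$, together with the combinatorial identity underlying the bialgebra compatibility. The signs are what make the raw properadic LR operations genuinely into a $B_\infty$-structure (as opposed to merely a graded-bialgebra-like structure), and here I would systematically use the colored-properad trick of Remark \ref{rem:shifts} so that every sign reduces to a mechanical application of the Koszul rule applied to the inserted copies of $\eta$. The combinatorial part is essentially a direct inspection of planar connected graphs, carried out once for each of the four elementary composition types in Figure \ref{fig:watson}; this is where one would in principle just cite \cite[Prop. 9]{merkulovvallette}.
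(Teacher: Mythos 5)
Your proposal is correct and takes essentially the same route as the paper's proof: define $m^{s,t}$ from the LR operations $B^s_t$, take $Q^1$ to be the induced differential, verify associativity by matching the (possibly disconnected) planar graphs on both sides of $m(m\otimes\Id)=m(\Id\otimes m)$, and use bounded connectivity to ensure the braces descend to $\CC_{\Pi}(\ooo)$. The one place you do extra work is the compatibility of $m$ with $\Delta$: this is automatic, since \eqref{eq:mdef} is by construction the unique coalgebra-morphism extension of the components $m^{s,t}$ to the cotensor coalgebra, so only associativity and commutation with $Q$ actually need checking.
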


\begin{proof}
We sketch the proof. First consider $\CC_{\oplus}(\ooo)$.
 In case $\sss = G(k)$ we put $Q = 0$ and in case $\sss = C(k)$ the only non-zero component of $Q$ is $Q^1$ which is the induced differential on $\CC_{\oplus}(\ooo)$.

We now  let $m^{s,t}$ be obtained by extending $B^s_t$ linearly to $\CC_{\oplus}(\ooo)$
and we define
$m$ by \eqref{eq:mdef}. We have to verify that $m$ commutes with $Q$ and is associative. Commutation
with $Q$ is obvious so we only have to 
check associativity. We first observe that
\[
m(\phi_1,\ldots,\phi_s,\psi_1,\ldots,\psi_t)
\]
has a similar formula as \eqref{eq:moperations} but now we also allow disconnected graphs.
The formula
\[
m(m\otimes \Id)=m(\Id \otimes m)
\]
can then be checked by observing that both sides contain the same graphs. 

\medskip

To extend this argument to $\CC_{\Pi}(\ooo)$ we have to extend the brace operations
\[
B^s_t: \CC_{\Pi}(\ooo)^{\otimes s} \otimes \CC_{\Pi}(\ooo)^{\otimes t} \lra \CC_{\Pi}(\ooo).
\]
To do this it clearly suffices that the number of brace operations $B^{\underline{m}', \underline{n}'}_{\underline{k}', \underline{l}'}$ with non-zero contribution of the image to a fixed $\ooo(k,n)$ is finite.
This follows from bounded connectivity as the number of possible graphs yielding a non-zero contribution is finite.
\end{proof}
Below our main emphasis will be $\CC_{\Pi}(\ooo)$. So we write
\[
\CC(\ooo)=\CC_{\Pi}(\ooo)
\]
For a coloured operad of bounded connectivity we write
\[
\CC(\ooo)=\CC_{\Pi}(\ooo) = \prod_{x,y}\ooo(x,y)
\]
where the product is over all colour sequences of inputs and outputs.
Proposition \ref{proppropBinfty} extends trivially
to coloured properads and we will use it as such. 

\section{$A_\infty$-algebras, coalgebras and modules}\label{sec:Ainfty}

\subsection{$A_\infty$-structures}
Let $(\aaa, \otimes, I)$ be a $k$-linear monoidal category.
 It will be convenient to embed $C(\Ascr)$ into $C^\ast(\Ascr):=\Mod(\Ascr)$ where $\Mod(\Ascr)$
consists of the contravariant dg-functors $\Ascr\r C(k)$. We will not equip $C^\ast(\Ascr)$ with
a monoidal structure, but we will use the fact that it is a module over the monoidal category
$C(k)$. Unfortunately $C(\Ascr)$ is not a monoidal category if $\Ascr$ is not closed under coproducts. Therefore we will work with the
category of right bounded complexes $C^-(\Ascr)$. If $\Ascr$ admits coproducts then we may work with $C(\Ascr)$ as well.

\medskip

In addition, $C^-(\aaa)$ is naturally enriched over $\sss = C(k)$.  In
this section we extend some well-known notions in $C(k)$ to
$C^-(\Ascr)$.  The easy standard proofs go through unmodified.
The differential on the $\Hom$-spaces in $C^-(\Ascr)$ will be denoted by $Q^1$.
We have $Q^1=[d,-]$ where $d$ is the differential of the complexes in $C^-(\Ascr)$.

\medskip

For $A \in C^-(\Ascr)$, consider the endomorphism operad $\End_A$ in $\sss$ with $$\End(A)(n) = \Hom_{\aaa}(A^{\otimes n}, A).$$ The Hochschild object $\CC_{\Hoch}(A)$ of $A$ is defined by
\begin{equation}\label{Hochobj}
\Sigma \CC_{\Hoch}(A) = \CC(\Xi \End_A).
\end{equation}
For $C \in G(A)$, consider the endomorphism co-operad $\End^{\mathrm{co}}_C$ in $\sss$ with $\End^{\mathrm{co}}_C(n) = \Hom_{\aaa}(C, C^{\otimes n})$. The co-Hochschild object $\CC_{\coHoch}(C)$ of $C$ is defined by
\begin{equation}\label{coHochobj}
\Sigma \CC_{\coHoch}(C) = \CC(\Xi^{-1} \End^{\mathrm{co}}_C).
\end{equation}
By Proposition \ref{proppropBinfty}, both $\CC_{\Hoch}(A)$ and $\CC_{\coHoch}(C)$ are $B_{\infty}$-algebras. 

We recall the following
\begin{definition}
\begin{enumerate}
\item A \emph{$ A_{\infty}$-algebra structure} on $A$ is a degree $1$
  element $\xi \in \Sigma \CC_{\Hoch}(A)$ with
\begin{equation}
\label{eq:mc}
 Q^1(\xi) + \xi \bullet \xi = 0
\end{equation} 
and
  $0 = \xi_0 \in \Hom_{\aaa}(I, \Sigma A)$,
where the $\xi_i\in \Hom_{\aaa}((\Sigma A)^{\otimes i},\Sigma A)$ are the components of $\xi_i$.
It is easy to see that \eqref{eq:mc} implies that $\xi_2$  is closed.
\item An $ A_{\infty}$-algebra $A$ has a \emph{homotopy unit}
  $\eta: I \lra A$ provided that $\eta$ is closed of degree zero and there are, unspecified, homotopies
  $\xi_2(\eta \otimes \id) \sim \id$ and $\xi_2(\id\otimes \eta) \sim \id$.
\item An \emph{$ A_{\infty}$-coalgebra structure} on $C$ is a degree
  $1$ element $\xi \in \Sigma \CC_{\coHoch}(C)$ with
  $Q^1(\xi) + \xi \bullet \xi = 0$ such that
  $0 = \xi_0 \in \Hom_{\aaa}(\Sigma^{-1} C, I)$,
\item An $A_{\infty}$-coalgebra structure on $C$ has a \emph{homotopy
    counit} $\epsilon: C \lra I$ provided that $\epsilon$ is closed of degree zero and there are, unspecified, homotopies
  $(\epsilon \otimes \id)\xi_2 \sim \id$ and
  $(\id \otimes \epsilon)\xi_2 \sim \id$.
\end{enumerate}
\end{definition}
In the rest of this section we will follow tradition and write an $A_\infty$-(co)algebra structure
as $b_n=\xi_n$ for $n\ge 2$. In addition we let $b_1=d+\xi_1$.
Finally we put $b_0=0$. For an $A_\infty$-algebra structure on $A$ we should then have
on $(\Sigma A)^{\otimes n}$
\begin{equation}
\label{eq:Ainfty}
\sum_{a+c+b=n} b_{a+1+b}\circ (\id^{\otimes a} \otimes b_c\otimes \id^{\otimes b})=0
\end{equation} 
and for an $A_\infty$-coalgebra structure on $C$ we should have on $\Sigma^{-1} C$ 
\begin{equation}
\label{eq:coAinfty}
\sum_{a+c+b=n} (\id^{\otimes a} \otimes b_b\otimes \id^{\otimes b})\circ b_{a+1+b}=0.
\end{equation}
\begin{lemma}
Let $C$ be an $ A_\infty$-coalgebra in $C^-(\Ascr)$. Then $C^\vee:=\Sigma^{-1}\Hom(\Sigma^{-1}C,I)\allowbreak\cong \Hom(C,I)$
is an $A_\infty$-algebra in $C(k)$ using the formula 
\begin{equation}
\label{eq:dual}
b^{C^\vee}_n(\phi_1\otimes \cdots\otimes \phi_n)=(-1)^{\sum_i |\phi_i|} \mu\circ (\phi_1\otimes \cdots\otimes \phi_n) \circ b^C_n
\end{equation}
for $\phi_i\in \Sigma(C^\vee)=\Hom_{\Ascr}(\Sigma^{-1}C,I)$, where
$\mu$ is a generic notation for morphisms of the type $I^{\otimes a}\otimes A\otimes I^{\otimes b}\xrightarrow{\cong} A$.
 Moreover if $C$ has a homotopy counit $\epsilon$ then $\epsilon$ becomes a homotopy unit in $C^\vee$.
\end{lemma}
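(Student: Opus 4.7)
The plan is to verify three things in succession: (i) that formula \eqref{eq:dual} defines operations $b_n^{C^\vee}$ of degree one with $b_0^{C^\vee}=0$; (ii) the $A_\infty$-algebra axioms \eqref{eq:Ainfty} for the $b_n^{C^\vee}$; and (iii) the homotopy unit claim. For (i), since $b_n^C\colon \Sigma^{-1}C\to (\Sigma^{-1}C)^{\otimes n}$ has degree $1$ and $\mu$ has degree $0$, the composite $\mu\circ(\phi_1\otimes\cdots\otimes\phi_n)\circ b_n^C$ lies in $\Hom_{\Ascr}(\Sigma^{-1}C,I)=\Sigma C^\vee$ and has degree $1+\sum_i|\phi_i|$, exactly as required of a degree-one operation $(\Sigma C^\vee)^{\otimes n}\to \Sigma C^\vee$. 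The condition $b_0^{C^\vee}=0$ is immediate from $b_0^C=0$.

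For (ii) I would substitute \eqref{eq:dual} into the left-hand side of \eqref{eq:Ainfty} and reduce it to \eqref{eq:coAinfty}. Applied to $\phi_1\otimes\cdots\otimes\phi_n$, the summand indexed by $(a,c,b)$ with $a+c+b=n$ unwinds, by the interchange law between composition and tensor product in the monoidal category and associativity of the iterated $\mu$'s, into an expression of the form
\[
\pm\,\mu\circ(\phi_1\otimes\cdots\otimes\phi_n)\circ(\id^{\otimes a}\otimes b_c^C\otimes \id^{\otimes b})\circ b_{a+1+b}^C.
\]
Summing over $(a,c,b)$ factors $\mu\circ(\phi_1\otimes\cdots\otimes\phi_n)$ out of the sum, leaving precisely the coalgebra identity \eqref{eq:coAinfty} for $C$ inside, which vanishes. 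The main obstacle I anticipate is the sign bookkeeping: the Koszul signs coming from commuting the degree-one operation $b_c^{C^\vee}$ past $\id^{\otimes a}$, from commuting the degree-one morphism $b_c^C$ past $\phi_1\otimes\cdots\otimes\phi_a$ inside the tensor product, and the two prefactors $(-1)^{\sum|\phi_i|}$ produced by the two applications of \eqref{eq:dual}, must combine to a single overall sign independent of the splitting $(a,c,b)$ so that it can be pulled out of the sum. This is precisely why the prefactor $(-1)^{\sum_i|\phi_i|}$ is built into \eqref{eq:dual}; once this bookkeeping is confirmed the $A_\infty$-relations follow.

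For (iii), view $\epsilon\in C^\vee$ of degree zero as the candidate unit $\eta$. For any $\phi\in \Sigma C^\vee$, naturality of the iterated multiplication $\mu$ combined with \eqref{eq:dual} gives
\[
b_2^{C^\vee}(\epsilon\otimes\phi)=\pm\,\mu\circ(\epsilon\otimes\phi)\circ b_2^C = \pm\,\phi\circ\mu\circ(\epsilon\otimes \id_{\Sigma^{-1}C})\circ b_2^C\sim \pm\,\phi,
\]
where the last homotopy uses the counit relation $(\epsilon\otimes\id)\circ b_2^C\sim\id$ after absorbing the left unitor into $\mu$. With the sign convention of \eqref{eq:dual} the $\pm$ becomes $+$, yielding $b_2^{C^\vee}(\epsilon\otimes\phi)\sim\phi$. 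The right unit homotopy $b_2^{C^\vee}(\phi\otimes\epsilon)\sim\phi$ is treated symmetrically using $(\id\otimes\epsilon)\circ b_2^C\sim\id$. Closedness of $\eta=\epsilon$ of degree zero in $C^\vee$ is inherited from the corresponding property of $\epsilon$ in $C$.
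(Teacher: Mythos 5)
Your proposal is correct and takes essentially the same route as the paper, whose entire proof is the remark that equation \eqref{eq:coAinfty} for $\Sigma^{-1}C$ becomes \eqref{eq:Ainfty} for $\Sigma(C^\vee)$ together with ``the claim about (co)units is also easy''; you are simply carrying out that dualisation explicitly. The sign bookkeeping you flag does close up: unwinding a summand of \eqref{eq:Ainfty} via \eqref{eq:dual} produces the Koszul sign $(-1)^{\sum_{i\le a}|\phi_i|}$, the two prefactors $(-1)^{1+\sum_i|\phi_i|}$ and $(-1)^{\sum_{a<i\le a+c}|\phi_i|}$, and the interchange sign $(-1)^{\sum_{i>a+c}|\phi_i|}$, whose product is $-1$ independently of $(a,c,b)$, so $\mu\circ(\phi_1\otimes\cdots\otimes\phi_n)$ factors out against \eqref{eq:coAinfty} exactly as you predict.
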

\begin{proof}
It is easy to verify that the equation \eqref{eq:coAinfty} for $\Sigma^{-1}C$  becomes \eqref{eq:Ainfty} for $\Sigma(C^\vee)$. 
The claim about (co)units is also easy.
\end{proof}

Recall that if $C$ is an $A_\infty$-coalgebra in $C^-(\Ascr)$ then an $A_\infty$-comodule $M$ over $C$ in $C^-(\Ascr)$ is 
an object in $C^-(\Ascr)$ together with morphisms of degree one
\[
b_n:M\r \underbrace{\Sigma^{-1}C\otimes\cdots\otimes \Sigma^{-1}C}_{n-1}\otimes M
\] 
satisfying an obvious analogue of \eqref{eq:coAinfty}.  We say that $M$ is homotopically counital if $(\epsilon\otimes \Id_N)\circ b_2$
is homotopic to the identity $M\r M$.

Similarly if $A$ is an $A_\infty$-algebra in $C^-(\Ascr)$ then an $A_\infty$-module $M$ over $A$ in $C^-(\Ascr)$ is 
an object in $C^-(\Ascr)$ together with morphisms of degree one
\begin{equation}
\label{eq:universal}
b_n:\underbrace{\Sigma A\otimes\cdots\otimes \Sigma A}_{n-1}\otimes M\r 
M
\end{equation} 
satisfying the obvious analogue of \eqref{eq:Ainfty}. We say that $M$
is homotopically unital if $b_2\circ (\eta\otimes \Id_N)$ is homotopic
to the identity $M\r M$. What we actually need below is the variant
where $A$ is an $A_\infty$-algebra in $C(k)$ and $M\in C^-(\Ascr)$. In
that case we view \eqref{eq:universal} as a morphism in
$C^\ast(\Ascr)$ where we use the $C(k)$ action on $C^\ast(\Ascr)$.
We may specify $b_n$ by specifying for each $sa_i\in \Sigma A$
the corresponding degree one map $b_n(sa_1\otimes \ldots\otimes sa_{n-1}\otimes -):M\r M$
\begin{lemma} \label{eq:verify}
Assume that $C$ is an $A_\infty$-coalgebra in $C^-(\Ascr)$. Let $M$ be an $A_\infty$-module over $C$. Then $M$ becomes an $A_\infty$-module
over $C^\vee$ via
\[
b_n(\phi_1\otimes\cdots\otimes \phi_n\otimes-)=(-1)^{\sum_i |\phi_i|}\mu\circ (\phi_1\otimes\cdots \otimes
 \phi_{n-1}\otimes \Id_M)\circ b_n
\]
where $\phi_i\in \Sigma(C^\vee)=\Hom(\Sigma^{-1} C,I)$. 
If $M$ is homotopically counital over $C$ then it is homotopically unital over $C^\vee$.
\end{lemma}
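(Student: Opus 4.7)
The plan is to mirror the proof of the preceding lemma at the level of the comodule/module structure. The first step is to verify that the given formula produces degree-one morphisms $b_n : (\Sigma C^\vee)^{\otimes (n-1)} \otimes M \to M$ in $C^\ast(\Ascr)$. This is a direct degree count: $b^M_n$ contributes $1$, the tensor $\phi_1 \otimes \cdots \otimes \phi_{n-1} \otimes \Id_M$ contributes $\sum_i |\phi_i|$, and $\mu$ contributes $0$, so the composite has total degree $1 + \sum_i |\phi_i|$, which matches the required total degree of a degree-one map $b_n$ evaluated on $\phi_1 \otimes \cdots \otimes \phi_{n-1}$.

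The second step is to verify the $A_\infty$-module equations for the new structure, i.e.\ the analog of \eqref{eq:Ainfty} with $b^{C^\vee}_c$ acting on the algebra factors and the new $b^M_c$ acting on the tail that includes $M$. Starting from the $A_\infty$-comodule relations for $M$ over $C$, one groups the terms into two families: those in which some $b^C_c$ acts on a consecutive block of the $\Sigma^{-1}C$-factors produced by $b^M_n$ (not touching the $M$-entry), and those in which some $b^M_c$ acts on a tail that contains the $M$-entry. After precomposing with $\mu\circ(\phi_1\otimes\cdots\otimes \phi_{n-1}\otimes \Id_M)$ and multiplying by the global sign $(-1)^{\sum_i|\phi_i|}$, each term of the first type becomes, by the very definition of $b^{C^\vee}_c$ in the preceding lemma, a term involving $b^{C^\vee}_c(\phi_{a+1}\otimes\cdots\otimes\phi_{a+c})$; each term of the second type becomes, by the defining formula, a term involving the new $b^M_{a+1}$ applied to the tail. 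Summing, the comodule relations for the original $b^M_\bullet$ translate term-by-term into the desired $A_\infty$-module relations.

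For the homotopy unit, apply the formula at $n=2$ to $\epsilon \in \Sigma C^\vee = \Hom(\Sigma^{-1}C, I)$. Since $\epsilon$ arises from a closed degree-zero map $C\to I$, the sign $(-1)^{|\epsilon|}$ is trivial and one obtains
\[
b^M_2(\epsilon \otimes -) \;=\; \mu \circ (\epsilon \otimes \Id_M) \circ b^M_2,
\]
which is homotopic to $\Id_M$ by the assumption that $M$ is homotopically counital over $C$ (the unitor $\mu\colon I\otimes M\to M$ being an isomorphism used to identify the two sides). Hence $\epsilon$ is a homotopy unit for $M$ over $C^\vee$.

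The main obstacle is the sign bookkeeping in the second step. However, the Koszul signs that appear when sliding $\mu$ and the $\phi_i$ past $b^C_c$, past $b^M_c$, and past one another are governed by the same accounting as in the algebra case treated in the preceding lemma; the $\Id_M$ factor in the rightmost slot is inert and only contributes through its (zero) degree. Consequently no new sign analysis is required beyond what was established for $C^\vee$, and the verification reduces to the $A_\infty$-algebra calculation with the distinguished module slot carried along passively.
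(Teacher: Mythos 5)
Your proposal is correct and follows essentially the route the paper intends (the paper in fact gives no written proof of this lemma, treating it as the same routine dualization sketched for the preceding lemma on $C^\vee$): a degree count, a term-by-term translation of the $A_\infty$-comodule relations into the module relations using the defining formulas for $b^{C^\vee}_\bullet$ and the new $b^M_\bullet$, and the counit-to-unit observation. The one detail to watch is that $\epsilon$, viewed as an element of $\Sigma(C^\vee)=\Hom(\Sigma^{-1}C,I)$, has degree $-1$ rather than $0$, so the prefactor $(-1)^{|\epsilon|}$ is $-1$ and not $+1$ as you assert; this at worst makes the homotopy unit $-\epsilon$ rather than $\epsilon$ and is the same sign ambiguity already implicit in the paper's statement of the preceding lemma, so it does not affect the substance of the argument.
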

If $A$, $B$ are $A_\infty$-algebras in $C(k)$ then an $A_\infty$-morphism $f:A\r B$ is a sequence of maps of degree 0 for $n\ge 1$
\[
f_n:(\Sigma A)^{\otimes n} \r \Sigma B
\]
such that
\begin{equation}
\label{eq:Ainftymorphism}
\sum_{\sum_{i=1}^k u_i=n} b_{k} \circ (f_{u_1}\otimes\cdots \otimes f_{u_k})=
\sum_{a+c+b=n}f_{a+1+b}\circ (\Id^{\otimes a}\otimes b_c\otimes \Id^{\otimes b})
\end{equation}
For use below we say that $f$ is strict if $f_i=0$ for $i\ge 2$.
An $A_\infty$-morphism is homotopically unital if $f_1\circ \eta_A$ is homotopic to $\eta_B$.
The following lemma is standard.
\begin{lemma}
If $A$ is an $A_\infty$-algebra in $C(k)$ and $M$ is an $A_\infty$-$A$-module in $C^-(\Ascr)$ (see above) then
\begin{multline*}
f_n:\Sigma A\otimes\cdots\otimes \Sigma A \r \Sigma \End_k(M):\\
sa_1\otimes\cdots \otimes sa_n
\mapsto s(b_{n+1}(sa_1\otimes \ldots\otimes sa_n\otimes -))
\end{multline*}
is an $A_\infty$-morphism where the dg-algebra $\End(M)$ is regarded as an $A_\infty$-algebra as in Example \ref{ex:assoc}. If $A$ and $M$ are homotopically unital
then $f$ is homotopically unital. Moreover this construction is reversible.
\end{lemma}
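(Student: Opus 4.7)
The plan is to verify the $A_\infty$-morphism equation \eqref{eq:Ainftymorphism} for the given $f_n$ by directly translating it into the $A_\infty$-module equation for $M$. First I observe that $\End_k(M)$, viewed as an $A_\infty$-algebra via the construction in Example \ref{ex:assoc}, has $b^{\End}_1 = [d,-]$, $b^{\End}_2 = -\eta \circ \mathrm{comp} \circ (\eta^{-1}\otimes\eta^{-1})$ (signed composition of operators on $M$), and $b^{\End}_k = 0$ for $k\ge 3$. Consequently, on the left-hand side of \eqref{eq:Ainftymorphism} only the terms with $k=1$ and $k=2$ survive, reducing the identity to
\[
b^{\End}_1\circ f_n + \sum_{u_1+u_2=n} b^{\End}_2\circ (f_{u_1}\otimes f_{u_2}) \;=\; \sum_{a+c+b=n}f_{a+1+b}\circ (\Id^{\otimes a}\otimes b^A_c\otimes \Id^{\otimes b}).
\]

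The second step is to unfold the definition $f_n(sa_1\otimes\cdots\otimes sa_n) = s\,b^M_{n+1}(sa_1\otimes\cdots\otimes sa_n\otimes -)$ and apply both sides as operators on $M$. Using that $b^{\End}_2$ is (up to its shift sign) plain composition of endomorphisms, the second term on the left becomes a sum of expressions of the shape $b^M_{u_1+1}(sa_1\otimes\cdots\otimes sa_{u_1}\otimes b^M_{u_2+1}(sa_{u_1+1}\otimes\cdots\otimes sa_n\otimes -))$, which is precisely the part of the $A_\infty$-module equation for $M$ where the inner $b$ acts on the $M$-slot. The first term on the left, $b^{\End}_1\circ f_n = [d,-]\circ f_n$, accounts for the part of $b^M_1$ falling on the $M$-slot, while the right-hand side captures all the remaining terms where $b^A_c$ acts inside the tensor factor $\Sigma A\otimes\cdots\otimes\Sigma A$. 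Collecting these contributions, the identity becomes exactly the module equation \eqref{eq:Ainfty}-analogue for $M$ (the one satisfied by \eqref{eq:universal}), so it holds by hypothesis.

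For the homotopy unit claim, $f_1(\eta_A)$ equals (up to shift) the operator $b^M_2(\eta_A\otimes -)$ on $M$, which by definition of homotopy unitality of $M$ is homotopic to $\Id_M = \eta_{\End_k(M)}$, whence $f$ is homotopically unital. The reversibility statement is formal: given an $A_\infty$-morphism $f:A\r \Sigma\End_k(M)$, set $b^M_{n+1}(sa_1\otimes\cdots\otimes sa_n\otimes -) := s^{-1}f_n(sa_1\otimes\cdots\otimes sa_n)$; running the above calculation in reverse shows that \eqref{eq:Ainftymorphism} translates back to the $A_\infty$-module equation, and the two assignments are mutually inverse by construction.

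The main obstacle is bookkeeping: all of the identities are elementary once one has adopted a single sign convention, but $b^{\End}_2$ carries the sign twist of Example \ref{ex:assoc}, $f_n$ is defined using the suspension $s$, and the $A_\infty$-module equation for $M$ is an equation on $(\Sigma A)^{\otimes n-1}\otimes M$ rather than on $A^{\otimes n-1}\otimes M$. The verification therefore comes down to checking that the Koszul signs produced by moving $\eta$'s past the $\phi_i$'s in \eqref{eq:shiftedprop1} match the signs in the module equation, exactly as in the dual computation carried out in Example \ref{ex:assoc}. Once this sign matching is done, every term of the two sides of \eqref{eq:Ainftymorphism} corresponds to a unique term of the $A_\infty$-module equation, and the lemma follows.
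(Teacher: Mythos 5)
The paper does not actually prove this lemma---it is stated with the remark ``The following lemma is standard'' and no argument is given---so there is nothing to compare your proof against except the standard folklore argument, which is exactly what you have written. Your reduction is correct: since $\End_k(M)$ is a dg-algebra, only the $k=1,2$ terms survive on the left of \eqref{eq:Ainftymorphism}, and after unfolding $f_n$ the two sides of the morphism equation applied to $m\in M$ reassemble precisely into the module analogue of \eqref{eq:Ainfty} for $b^M_{n+1}$ (the $b=0$ terms with inner operation $b^M_{u_2+1}$, $u_2\ge 1$, give the $b_2^{\End}\circ(f_{u_1}\otimes f_{u_2})$ contributions; the $b\ge 1$ terms give the right-hand side; the two extremal terms $b^M_1\circ b^M_{n+1}$ and $b^M_{n+1}\circ(\Id^{\otimes n}\otimes b^M_1)$ give the commutator term), and the unit and reversibility claims are as you say. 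The only point worth flagging is that the differential of the dg-algebra $\End_k(M)$ must be taken to be $[b^M_1,-]$ rather than $[d_M,-]$ if the module structure carries a nonzero perturbation $\xi^M_1$ of the internal differential; since an $A_\infty$-morphism in the paper's sense has no $f_0$ component to absorb such a perturbation, the clean statement requires this (or the implicit convention $b^M_1=d_M$). With that convention fixed, your sign-matching programme via \eqref{eq:shiftedprop1} and Example \ref{ex:assoc} is exactly the right bookkeeping and the proof is complete.
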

\begin{corollary}
\label{cor:action}
Assume that $C$ is an $A_\infty$-coalgebra in $C^-(\Ascr)$ then there is a canonical 
$A_\infty$-morphism
\[
f:C^\vee\r \Hom(C,C)
\]
such that if $C$ is homotopically counital then $f$ is homotopically unital, and such that moreover the composition
\[
C^\vee\r \Hom(C,C)\xrightarrow{\epsilon\circ-} \Hom(C,I)\cong C^\vee 
\]
is homotopic to the identity.
\end{corollary}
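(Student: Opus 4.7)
The approach is to obtain $f$ as an instance of the module-to-$A_\infty$-morphism correspondence from the lemma immediately preceding the corollary, applied to $C$ itself viewed as an $A_\infty$-module over $C^\vee$. First, $C$ is tautologically an $A_\infty$-comodule over itself: the comodule structure maps $b_n^{\mathrm{com}}: C \to (\Sigma^{-1}C)^{\otimes(n-1)} \otimes C$ are obtained from the coalgebra maps $b_n^C: \Sigma^{-1}C \to (\Sigma^{-1}C)^{\otimes n}$ by interpreting the last tensor factor via the suspension identification. The defining axioms for an $A_\infty$-comodule reduce to the $A_\infty$-coalgebra relations \eqref{eq:coAinfty} for $C$, and homotopy counitality as a coalgebra transfers to homotopy counitality as a comodule over itself.

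Second, Lemma \ref{eq:verify} then yields an $A_\infty$-module structure on $C$ over $C^\vee$, which is homotopically unital whenever $C$ is homotopically counital. Third, applying the preceding (currying) lemma to this module produces an $A_\infty$-morphism $f: C^\vee \to \End(C) \cong \Hom(C,C)$; the currying correspondence preserves homotopy unitality, which establishes the existence and unitality claims.

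For the final statement, observe that the post-composition map $\epsilon \circ -: \Hom(C,C) \to \Hom(C,I) \cong C^\vee$ is a strict $A_\infty$-morphism, since it is a dg-map of degree zero. Unwinding the formulas of Lemma \ref{eq:verify} and of the currying lemma, the $n$-th component of $(\epsilon \circ -) \circ f$ evaluated on $\phi_1 \otimes \cdots \otimes \phi_n$ reduces, up to signs and the canonical isomorphisms $\mu$, to $(\phi_1 \otimes \cdots \otimes \phi_n \otimes \epsilon) \circ b_{n+1}^{\mathrm{com}}$. For $n = 1$ this is $(\phi \otimes \epsilon) \circ b_2^{\mathrm{com}}$, which by the homotopy counit axiom $(\Id \otimes \epsilon) \circ b_2^C \sim \Id$ is homotopic to $\phi$ under the identification $C^\vee \cong \Hom(C,I)$. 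For $n \geq 2$, iterated use of the $A_\infty$-counit relations yields homotopies to zero, exactly the data required to homotope the composition to the identity $A_\infty$-endomorphism of $C^\vee$.

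The main obstacle will be the sign bookkeeping in Step~3 and, more substantively, the construction of the higher homotopies $\epsilon \circ f_n \sim 0$ for $n \geq 2$. Rather than producing them one by one, I would try to package the argument by exhibiting an explicit $A_\infty$-comodule contraction of $C$ onto the trivial comodule $I$ (whose structure maps vanish in arities $\geq 2$), and then transporting this contraction through Lemma \ref{eq:verify} and the currying correspondence in order to produce all the required homotopies of $A_\infty$-morphisms at once.
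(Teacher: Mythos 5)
Your proof follows the paper's own argument exactly: the paper's proof of this corollary is the one-liner that the first claim follows from Lemma \ref{eq:verify} since $C$ is an $A_\infty$-comodule over itself (fed into the preceding currying lemma), and that the second claim is ``a direct verification''. The only caveat is your side remark that $\epsilon\circ-$ is a \emph{strict} $A_\infty$-morphism merely because it is a degree-zero dg-map --- that inference is not valid in general --- but it is also not needed, since for the intended application (Corollary \ref{cor:quasi}) only a chain homotopy between the first component of the composite and the identity is required, and your $n=1$ computation via the homotopy counit supplies exactly that.
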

\begin{proof} 
The first claim follows from Lemma \ref{eq:verify} since $C$ is an $A_\infty$-comodule over $C$. The second claim is a direct verification.
\end{proof}

\subsection{The $A_\infty$-structure on the Hochschild complex of an $A_\infty$-coalgebra}
By Propositions 
\ref{proptwist} 
 and 
\ref{proppropBinfty}
we have:
\begin{proposition}\label{Binftycohoch}
\begin{enumerate}
\item Let $A \in C^-(\aaa)$ be endowed with an $A_{\infty}$-algebra structure $\xi$. Then the graded $k$-module $\Sigma \CC_{\Hoch}(A)$ becomes a $\Xi B_{\infty}$-algebra with differential $Q^1 + [\xi, -]$ for the commutator with respect to the dot product.
\item  Let $C \in C^-(\aaa)$ be endowed with an $A_{\infty}$-coalgebra structure $\xi$. Then the graded $k$-module $\Sigma \CC_{\coHoch}(C)$ becomes a $\Xi B_{\infty}$-algebra with differential $Q^1 + [\xi, -]$ for the commutator with respect to the dot product.
\end{enumerate}
\end{proposition}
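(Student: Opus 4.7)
The plan is to read off Proposition \ref{Binftycohoch} as a direct combination of Propositions \ref{proppropBinfty} and \ref{proptwist}, applied respectively to the endomorphism operad $\End_A$ and the endomorphism co-operad $\End^{\mathrm{co}}_C$ (after the appropriate shifts $\Xi^{\pm 1}$).

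First I would verify the bounded connectivity hypothesis needed to apply Proposition \ref{proppropBinfty} to $\CC_{\Pi}$. Both $\End_A$ and $\End^{\mathrm{co}}_C$ are (co)operads, i.e.\ all non-trivial components have exactly one output (resp.\ one input). Consequently, in any elementary composition as in Figure \ref{fig:watson}, the internal light-grey strands all land in (or emerge from) the unique (co-)root of the inner operation, forcing the connection arity to equal $1$. Hence $p=1$ works and both $\Xi \End_A$ and $\Xi^{-1}\End^{\mathrm{co}}_C$ have bounded connectivity. Proposition \ref{proppropBinfty} then endows $\CC(\Xi\End_A)$ and $\CC(\Xi^{-1}\End^{\mathrm{co}}_C)$ with $\Xi B_{\infty}$-structures, which by \eqref{Hochobj} and \eqref{coHochobj} are precisely $\Sigma\CC_{\Hoch}(A)$ and $\Sigma\CC_{\coHoch}(C)$.

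Next I would match the $A_{\infty}$-(co)algebra structure equation with the Maurer-Cartan equation \eqref{eqMC} of the previous paragraph's $\Xi B_{\infty}$-algebra. In both cases $\xi$ is by definition a degree $1$ element of the relevant space satisfying $Q^1(\xi)+\xi\bullet\xi=0$, which is literally \eqref{eqMC}. Proposition \ref{proptwist} then yields the twisted $\Xi B_{\infty}$-structure $Q_\xi=Q+[\xi,-]$; since for $\CC_{\Pi}$ of a (co)operad the only non-zero component of $Q$ is $Q^1$ (the differential induced by that of $\Ascr$), the new differential is $Q^1+[\xi,-]$ as claimed. The brace and dot structure are unchanged in the twist, so both statements (1) and (2) of the proposition follow at once.

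There is no real obstacle; the only thing to check carefully is the verification of bounded connectivity so that Proposition \ref{proppropBinfty} is applicable to $\CC_{\Pi}$ rather than only $\CC_{\oplus}$. Since we need the product (not the sum) to house $\xi$ — whose components $\xi_n$ are typically nonzero for infinitely many $n$ — this point is essential, but as explained it is immediate for (co)operads.
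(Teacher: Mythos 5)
Your proposal is correct and follows exactly the route the paper takes: the paper derives Proposition \ref{Binftycohoch} directly from Propositions \ref{proppropBinfty} and \ref{proptwist}, with the $A_\infty$-(co)algebra equation being the Maurer--Cartan equation by definition. Your explicit check that (co)operads have connection arity $1$, hence bounded connectivity so that $\CC_\Pi$ (rather than only $\CC_\oplus$) carries the $\Xi B_\infty$-structure, is a detail the paper leaves implicit but is exactly the right point to verify.
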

Note that a $B_\infty$-algebra is in particular an $A_\infty$-algebra by Remark \ref{rem:Ainfty}. If $C$ is an $A_\infty$-coalgebra then the $A_\infty$-structure on  $\CC_{\coHoch}(C)$ is
given by
\begin{align*}
b_1=[b^C,-]
\end{align*}
and for $n\ge 2$
\begin{align*}
b_n(\phi_1\otimes\cdots\otimes \phi_n)=(-1)^{\sum_i |\phi_i|}(\phi_1\otimes\cdots\otimes\phi_n)\circ b^C_n
\end{align*}
where $\phi_i\in \Sigma\CC_{\coHoch}(C)$. From the formula \eqref{eq:coAinfty} It follows that the projection
\begin{equation}
\label{eq:p}
p:\CC_{\coHoch}(C)\r \CC^0_{\coHoch}(C)= C^\vee.
\end{equation}
is a strict $A_\infty$-morphism.
If $C$ is homotopically counital then it is easy to see that this is the case for $\CC_{\coHoch}(C)$, and moreover $p$ is also counital.

\section{The $B_{\infty}$-structure on $\REnd$}\label{sec:BinftyREnd}
\subsection{Introduction}
In this section we assume that $\Ascr=(\Ascr,\otimes,I)$ is a $k$-linear abelian category with enough projectives. We do not assume that $\otimes$ is exact. If $M$ is an object in $\Ascr$ and $P^\bullet$ is a projective resolution
if $M$ then he usual triangular construction shows that $\End_{\Ascr}(P^\bullet)$  is independent of $P^\bullet$ in $\Ho(\dgAlg)$, the homotopy category
of dg-algebras. Therefore we write $\REnd_\Ascr(M)=\End_{\Ascr}(P^\bullet)$. The following is the main result of this paper.
\begin{theorem} \label{th:mainth}
Assume that $\Ascr$ has enough projectives and that $P\otimes-$ is exact for $P$ projective. Let $\bold{A}=\REnd_{\Ascr}(I)$. Then there exists a $B_\infty$-algebra $\bold{B}$ together
with a $A_\infty$-quasi-isomorphism $\bold{B}\r \bold{A}$.
\end{theorem}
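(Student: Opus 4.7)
The plan is to produce $\bold{B}$ as the co-Hochschild complex of a suitably lifted $A_\infty$-coalgebra structure on a projective resolution of $I$, and then to factor the required $A_\infty$-quasi-isomorphism through $P^\vee$. First I would fix a projective resolution $\epsilon: P \to I$ in $\Ascr$. Because $P\otimes-$ is exact for $P$ projective, an easy induction shows that each $P^{\otimes n}\to I^{\otimes n}\cong I$ is a quasi-isomorphism, which is precisely what standard obstruction theory needs in order to lift the trivial coalgebra structure $I\xrightarrow{\sim} I\otimes I$ on the tensor unit to an $A_\infty$-coalgebra structure $\xi$ on $P$ with homotopy counit $\epsilon$; this is the content of Proposition \ref{propcoalgP}. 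One then sets $\bold{B}:=\CC_{\coHoch}(P)$ equipped with the $B_\infty$-structure supplied by Propositions \ref{proppropBinfty} and \ref{Binftycohoch}(2), whose differential is $Q^1+[\xi,-]$.

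Next, the candidate $A_\infty$-morphism $\bold{B}\to\bold{A}$ is built as the composition
\[
\bold{B}=\CC_{\coHoch}(P)\xrightarrow{\ p\ } P^\vee\xrightarrow{\ f\ } \End_\Ascr(P)=\bold{A},
\]
where $p$ is the strict projection onto the arity-one component from \eqref{eq:p}, and $f$ is the canonical $A_\infty$-morphism produced by Corollary \ref{cor:action}. To see that $f$ is a quasi-isomorphism, I would use that a right-bounded complex of projectives is homotopy projective, so $\epsilon\circ -\colon\End_\Ascr(P)\to\Hom_\Ascr(P,I)=P^\vee$ is a quasi-isomorphism; by the last statement of Corollary \ref{cor:action}, $(\epsilon\circ-)\circ f$ is homotopic to $\id_{P^\vee}$, so $H^\ast(f)$ is a right inverse to an isomorphism and is therefore itself an isomorphism.

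To prove $p$ is a quasi-isomorphism I would use a spectral sequence. Filter $\bold{B}$ by $F^p\bold{B}=\prod_{n\geq p}\Sigma^{-n}\Hom_\Ascr(P,P^{\otimes n})$; since the components $\xi_k$ with $k\geq 2$ strictly increase arity while $\xi_1$ and $Q^1$ preserve it, $F^{\bullet}$ is a complete decreasing filtration compatible with the differential. On $E_0^{p}$ only the arity-preserving part of the differential survives, so $E_1^{p,q}=\Sigma^{-p}\Ext^q_\Ascr(I,I)$ using homotopy projectivity of $P$ together with $P^{\otimes p}\simeq I$ from the previous paragraph. The induced $E_1$-differential is determined by the image of $\xi$ on cohomology, which is precisely the trivial coalgebra structure on $I$; the cobar complex of the trivial coalgebra is acyclic outside degree zero, so $E_2$ is concentrated in column $p=0$ with value $\Ext^\ast_\Ascr(I,I)=H^\ast(P^\vee)$. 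Hence $p$ induces an isomorphism on cohomology.

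The main obstacle is making the third paragraph fully rigorous. Each ingredient is easy in isolation, but the filtration lives on an infinite product, so convergence of the spectral sequence has to be justified (the filtration is complete and Hausdorff, and one expects the standard complete-convergence criterion to apply). The identification of the $E_1$-differential with the cobar differential of the trivial coalgebra on $I$, although morally clear from the way $\xi$ was constructed as a lift of the trivial coalgebra, must also be checked carefully. Once these points are in place, the composite $\bold{B}\to P^\vee\to\bold{A}$ is an $A_\infty$-quasi-isomorphism as required.
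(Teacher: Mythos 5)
Your proposal is correct and follows essentially the same route as the paper: lift the trivial coalgebra structure on $I$ to an $A_\infty$-coalgebra structure on $P$ via Proposition \ref{propcoalgP}, set $\bold{B}=\CC_{\coHoch}(P)$, and factor the map through $P^\vee$ using the projection $p$ of \eqref{eq:p} (shown to be a quasi-isomorphism by the arity filtration and the acyclicity of the resulting $E_1$-page, as in Proposition \ref{prop:mainprop}) followed by the canonical $A_\infty$-morphism of Corollary \ref{cor:action} (shown to be a quasi-isomorphism exactly as in Corollary \ref{cor:quasi}). The two caveats you flag — convergence of the spectral sequence on the infinite product and the identification of the $E_1$-differential — are handled no more explicitly in the paper's own argument, so your write-up matches it in both substance and level of detail.
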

\begin{remark} It is not hard to deduce from the proof below that  $\bold{B}$ is canonical in the homotopy category of $B_\infty$-algebras.
\end{remark}
\subsection{Proofs}
The following is proven in \cite[Thm. 3.1.1]{negronvolkovwitherspoon} for~$\aaa$ the category of bimodules over an algebra. The proof in the general case goes along the same lines.
\begin{proposition}\label{propcoalgP}
  Let $\epsilon: P^\bullet \lra I$ be a projective resolution such
  that for every $n$,
  $\epsilon^{\otimes n}: P^{\bullet\otimes n} \lra I^{\otimes n} \cong I$ is
  a resolution (not necessarily projective). There is an $A_{\infty}$-coalgebra structure on~$P^\bullet$
  with homotopy counit $\epsilon$, lifting the trivial
  coalgebra-structure on $I$.
\end{proposition}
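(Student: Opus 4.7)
My approach is to construct the higher coproducts $b_n : P^\bullet \to (P^\bullet)^{\otimes n}$ for $n \ge 2$ inductively, arranging at each step that $\epsilon$ is a \emph{strict} $A_\infty$-coalgebra morphism from $P^\bullet$ to $I$ (equipped with its trivial, strict coalgebra structure), i.e.\ that $(\epsilon \otimes \epsilon)b_2 = \epsilon$ (up to the canonical isomorphism $I^{\otimes 2} \cong I$) and $\epsilon^{\otimes n} b_n = 0$ for $n \ge 3$. The key underlying observation is that $P^\bullet$ is a bounded-above complex of projectives, hence K-projective, so that for every $n \ge 1$ the hypothesised quasi-isomorphism $\epsilon^{\otimes n} : (P^\bullet)^{\otimes n} \to I$ induces a quasi-isomorphism $\epsilon^{\otimes n}_* : \Hom_{\Ascr}(P^\bullet, (P^\bullet)^{\otimes n}) \to \Hom_{\Ascr}(P^\bullet, I)$ of complexes.

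For the base case $n = 2$, one simply lifts the chain map $\epsilon : P^\bullet \to I$ through the quasi-isomorphism $\epsilon \otimes \epsilon : P^\bullet \otimes P^\bullet \to I$; K-projectivity guarantees a chain map $b_2$, and degreewise surjectivity of $\epsilon \otimes \epsilon$ (coming from surjectivity of $\epsilon$ in degree~$0$ and right exactness of $\otimes$) permits arranging the strict equality $(\epsilon \otimes \epsilon) b_2 = \epsilon$.

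For the inductive step, assume $b_2, \dots, b_{n-1}$ satisfy \eqref{eq:coAinfty} in weights $< n$ together with the strict morphism conditions above. Form the obstruction
\[
O_n = \sum_{\substack{a + c + b = n\\ 2 \le c \le n-1}} (\id^{\otimes a} \otimes b_c \otimes \id^{\otimes b}) \circ b_{a+1+b},
\]
which a routine calculation (using only the relations in weight $< n$, analogous to the familiar one for $A_\infty$-algebras) shows to be a cocycle in $\Hom_{\Ascr}(P^\bullet, (P^\bullet)^{\otimes n})$ with its $[d,-]$ differential. The strict morphism hypothesis on $b_2, \ldots, b_{n-1}$ yields the identity $\epsilon^{\otimes n} \circ O_n = O^I_n \circ \epsilon$, where $O^I_n$ is the analogous expression computed on the strict coalgebra $I$; but $I$ satisfies all $A_\infty$-coalgebra relations trivially, so $O^I_n = 0$. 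Since $\epsilon^{\otimes n}_*$ is a quasi-isomorphism this forces $[O_n] = 0$, and hence $O_n = -[d, b'_n]$ for some $b'_n$. Replacing $b'_n$ by $b_n = b'_n - \Delta$, where $\Delta$ is a closed lift of the residual closed map $\epsilon^{\otimes n} b'_n$ to $(P^\bullet)^{\otimes n}$ (again via K-projectivity and degreewise surjectivity of $\epsilon^{\otimes n}$), produces $b_n$ satisfying both $[d, b_n] = -O_n$ and $\epsilon^{\otimes n} b_n = 0$, completing the induction.

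The homotopy counit property is then formal: from $(\epsilon \otimes \epsilon) b_2 = \epsilon$ one derives $\epsilon \circ ((\epsilon \otimes \id) b_2 - \id_{P^\bullet}) = 0$, and since $\epsilon_*$ is an isomorphism on $H^0(\Hom(P^\bullet,-))$ by K-projectivity, $(\epsilon \otimes \id) b_2$ is chain-homotopic to $\id_{P^\bullet}$; the argument for $(\id \otimes \epsilon) b_2$ is symmetric. The main obstacle I anticipate is the sign bookkeeping involved in verifying both that $O_n$ is a cocycle and that $\epsilon^{\otimes n} O_n = O^I_n \circ \epsilon$ — these computations are standard in the theory of $A_\infty$-(co)algebras but tedious to write out in full Koszul-sign detail; modulo them, the proof reduces to the two lifting arguments above.
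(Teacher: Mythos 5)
Your proof is correct and follows the same skeleton as the paper's: both construct the higher coproducts inductively by killing, at each weight $n$, a closed obstruction in $\Hom_{\Ascr}(P^\bullet,(P^\bullet)^{\otimes n})$, using that $P^\bullet$ is a right bounded complex of projectives and that $\epsilon^{\otimes n}$ is a quasi-isomorphism. The difference lies in \emph{why} the obstruction class vanishes. The paper identifies the obstruction with a class in a negative $\Ext$-group of $I$ and kills it for degree reasons; as stated this is slightly too quick, since the weight-$3$ obstruction (the coassociativity defect of $\xi_2$) is a degree-zero cocycle whose class lives in $\Ext^{0}_{\Ascr}(I,I)$ (not $\Ext^{1-n}$ as written, but $\Ext^{3-n}$), which does not vanish; its triviality requires precisely the compatibility $(\epsilon\otimes\epsilon)\xi_2=\epsilon$ coming from the fact that $\xi_2$ lifts the comultiplication of $I$ (this is spelled out in the cited reference). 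Your argument handles all weights uniformly: by maintaining the strictness conditions $(\epsilon\otimes\epsilon)b_2=\epsilon$ and $\epsilon^{\otimes m}b_m=0$ for $3\le m<n$, you get $\epsilon^{\otimes n}O_n=O_n^I\circ\epsilon=0$ on the nose, and injectivity of $\epsilon^{\otimes n}_*$ on cohomology then kills $[O_n]$ whether or not the ambient group vanishes. The correction step (replacing $b_n'$ by $b_n'-\Delta$ with $\Delta$ a closed strict lift of $\epsilon^{\otimes n}b_n'$) is legitimate because $\epsilon^{\otimes n}_*$ is a degreewise surjective quasi-isomorphism, so closed elements lift to closed elements. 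One small inaccuracy: you justify degreewise surjectivity of $\epsilon^{\otimes n}$ by ``right exactness of $\otimes$'', which is not among the hypotheses (the paper explicitly does not assume $\otimes$ exact); however, the surjectivity of $(P^{\bullet\otimes n})^0\to I$ that you actually need follows directly from the hypothesis that $\epsilon^{\otimes n}$ is a resolution, so nothing is lost. The deferred sign verifications are indeed standard and pose no obstacle.
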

\begin{proof}
Since $I$ is a coalgebra in $\Ascr$ and $\Ext^i_{\aaa}(I,I) = 0$ for $i < 0$ we can lift the coalgebra structure of $I$ to an $A_{\infty}$-coalgebra structure on $P^\bullet$.
The components of this structure are obtained by solving \eqref{eq:mc} inductively (thinking $\xi=\sum_{n\ge 2} \xi_n$) using the pre-Lie structure\footnote{In characteristic different
from $2$ one may avoid
using the pre-Lie structure and use the simpler Lie algebra structure instead by writing \eqref{eq:mc} as $Q^1(\xi)+(1/2)[\xi,\xi]=0$}.
(cfr.\ Remark \ref{rem:preLie}) on $\Sigma \CC_{\coHoch}(P^\bullet)$. More precisely we let $\xi_2:\Sigma^{-1}P^\bullet\r \Sigma^{-1}P^\bullet\otimes \Sigma^{-1}P^\bullet$ be obtained by lifting the
identity $\Sigma^{-1}I\r \Sigma^{-1}I\otimes \Sigma^{-1}I$ (this is possible since $P^\bullet$ consists of projectives). Next we find that for each $n>2$ we have to solve an equation
of the form $Q^1(\xi_n)=\pi_n$ where $\pi_n$ is  a morphism $\Sigma^{-1}P^\bullet\r (\Sigma^{-1}P^\bullet)^{\otimes n}$ satisfying $Q^1(\pi_n)=0$. Using the conditions on~$P^\bullet$ we find 
that~$\pi_n$ represents an element of $\Ext^{-n+1}_{\Ascr}(I,I)=0$. This allows us to define $\xi_n$  (see also \cite[Thm. 3.1.1]{negronvolkovwitherspoon}).
\end{proof}
\begin{remark}
\label{rem:exact}
If the projectives $P_i$ are such that $P_i \otimes -$ is exact, then the condition upon $P$ in Proposition \ref{propcoalgP} is fulfilled.
\end{remark}
We now state some results on $A_\infty$-coalgebras in $C^-(\Ascr)$.
\begin{corollary} 
\label{cor:quasi}
Assume that $C$ is a homotopically counital $A_\infty$-coalgebra in $C^-(\Ascr)$ which is a right bounded complex of projectives.
Assume furthermore that the counit is a quasi-isomorphism. Then there is a homotopically unital $A_\infty$-quasi-isomorphism 
\[
C^\vee \r \End_{\Ascr}(C)\cong \REnd_{\Ascr}(I)
\]
\end{corollary}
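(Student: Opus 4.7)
The plan is to combine Corollary \ref{cor:action} with the observation that under the hypotheses, $C$ is precisely a projective resolution of $I$, and then to leverage the retraction supplied by that corollary.

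First, since $C$ is a right bounded complex of projectives in $\Ascr$ and the counit $\epsilon : C \to I$ is a quasi-isomorphism, $C$ is a projective resolution of $I$. By the definition of $\REnd$ recalled at the start of this section, we therefore have $\End_{\Ascr}(C) \cong \REnd_{\Ascr}(I)$ in $\Ho(\dgAlg)$, which accounts for the identification in the statement.

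Next, applying Corollary \ref{cor:action} to the $A_\infty$-coalgebra $C$ produces a canonical $A_\infty$-morphism
$$f : C^\vee \longrightarrow \Hom_{\Ascr}(C,C) = \End_{\Ascr}(C),$$
which is homotopically unital (since $C$ is homotopically counital), together with a homotopy on the level of underlying complexes
$$\epsilon_\ast \circ f_1 \simeq \id_{C^\vee},$$
where $f_1$ is the linear component of $f$ and $\epsilon_\ast : \End_{\Ascr}(C) \to \Hom_{\Ascr}(C, I) \cong C^\vee$ denotes post-composition with $\epsilon$ (a strict morphism of complexes).

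The remaining step is to verify that $f$ is an $A_\infty$-quasi-isomorphism, i.e.\ that $f_1$ is a quasi-isomorphism of complexes in $C(k)$. Since $C$ is a right-bounded complex of projectives in $\Ascr$, the functor $\Hom_{\Ascr}(C,-)$ preserves quasi-isomorphisms of complexes (a standard homological fact that does not require any $\otimes$-exactness hypothesis on the projectives). Hence $\epsilon_\ast$ is itself a quasi-isomorphism. Passing to cohomology in the homotopy $\epsilon_\ast \circ f_1 \simeq \id_{C^\vee}$, the 2-out-of-3 property for quasi-isomorphisms then forces $f_1$ to be a quasi-isomorphism as well.

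I anticipate no serious obstacle: the corollary is in essence a packaging of Corollary \ref{cor:action} together with the basic observation that the hypotheses make $C$ into a projective resolution of $I$. The only point that warrants care is the separation of roles between ``$A_\infty$-quasi-isomorphism'' (quasi-isomorphism of linear components) and the $A_\infty$-composition $\epsilon_\ast \circ f$, which in this case reduces to the ordinary composition $\epsilon_\ast \circ f_1$ because $\epsilon_\ast$ is strict.
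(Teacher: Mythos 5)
Your proof is correct and follows the same route as the paper, whose entire proof is ``This follows from Corollary \ref{cor:action}''; you have simply unpacked the details (the identification $\End_{\Ascr}(C)\cong\REnd_{\Ascr}(I)$ via $C$ being a projective resolution of $I$, and the quasi-isomorphism property of $f_1$ via the retraction $\epsilon_\ast\circ f_1\simeq\id$ together with $\Hom_{\Ascr}(C,-)$ preserving quasi-isomorphisms for the homotopy-projective $C$). Nothing further is needed.
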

\begin{proof} This follows from Corollary \ref{cor:action}.
\end{proof}
\begin{proposition}
\label{prop:mainprop}
 Assume that  $C$ is a homotopically counital $A_\infty$-coalgebra in $C^-(\Ascr)$ which is a right bounded complex of projectives such that the counit is a quasi-isomorphism. 
Assume furthermore that for any $n$, $C^{\otimes n}$ is a resolution (not necessarily projective) of \hbox{$I^{\otimes n}=I$}. Then the projection $p:\CC_{\coHoch}(C)\r C^\vee$ (see \eqref{eq:p}) 
is a homotopically unital $A_\infty$ quasi-isomorphism.
\end{proposition}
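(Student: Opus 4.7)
The statement really has two parts: that $p$ is a strict, homotopically unital $A_\infty$-morphism, and that it is a quasi-isomorphism of underlying complexes. The first part is essentially established in the remarks preceding the proposition---strictness of $p$ on the $A_\infty$-level comes from reading \eqref{eq:coAinfty} in the $n=0$ co-operad component, while $\epsilon\in C^\vee\subset \CC_{\coHoch}(C)$ serves as a homotopy unit in both complexes. So the only real content is that $p$ is a quasi-isomorphism.

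For this I would use the decreasing column filtration
\[
F^k\CC_{\coHoch}(C)=\prod_{n\ge k}\Sigma^{-n}\Hom_\aaa(C,C^{\otimes n}),
\]
which is preserved by the full differential $Q^1+[\xi,-]$ because $Q^1$ preserves the column index while $[\xi_n,-]$ raises it by exactly $n-1\ge 1$ for $n\ge 2$ (from the calculation of the brace operations $B^{1,1}$ with one leg being $\xi_n$). The quotient $\CC_{\coHoch}(C)/F^1$ is exactly $C^\vee$ with its inherited differential, and $p$ is the canonical projection. On the $E_0$ page, the $p$-th column is $\Sigma^{-p}\Hom_\aaa(C,C^{\otimes p})$ with differential $Q^1=[d,-]$. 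Since $C$ is K-projective (being a right-bounded complex of projectives) and the composite $\tilde\epsilon:C^{\otimes n}\to I$ is a quasi-isomorphism by hypothesis, the induced map $\tilde\epsilon_\ast:\Hom_\aaa(C,C^{\otimes n})\to C^\vee$ is a quasi-isomorphism for every $n$. This yields canonical identifications $E_1^{p,\ast}\cong H^\ast(C^\vee)$ for all $p\ge 0$.

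The induced differential $d_1$ comes from the unique column-shift-one piece of $[\xi,-]$, namely $[\xi_2,-]$. A direct brace-operation count gives, for $\phi\in\Hom_\aaa(C,C^{\otimes p})$, that $\xi_2\bullet\phi=\sum_{i=1}^p\pm(\Id^{\otimes i-1}\otimes\xi_2\otimes\Id^{\otimes p-i})\circ\phi$ and $\phi\bullet\xi_2=(\phi\otimes\Id)\xi_2\pm(\Id\otimes\phi)\xi_2$. Pushing these through $\tilde\epsilon$ and using the homotopy-counit axiom $(\epsilon\otimes\Id)\xi_2\simeq\Id\simeq(\Id\otimes\epsilon)\xi_2$, each of the $p+2$ summands becomes homotopic to $\tilde\epsilon\circ\phi$ in $C^\vee$. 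Under the identifications $E_1^{p,\ast}\cong H^\ast(C^\vee)$, the differential $d_1$ thus becomes (up to signs) the cobar differential for the trivial coalgebra $I\xrightarrow{\Id}I\otimes I\cong I$ with coefficients in $H^\ast(C^\vee)$, whose cohomology vanishes in columns $\ge 1$ and equals $H^\ast(C^\vee)$ in column $0$. Hence $E_2=E_\infty$ is concentrated on the $0$-column.

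Finally the filtration is complete and Hausdorff (clearly $F^0=\CC_{\coHoch}(C)$ and $\bigcap_k F^k=0$, and $\CC_{\coHoch}(C)=\lim_k\CC_{\coHoch}(C)/F^k$); since $E_\infty^{p,\ast}=0$ for $p\ge 1$, the induced filtration on $H^\ast(\CC_{\coHoch}(C))$ has $F^1H^\ast=0$, giving $H^\ast(\CC_{\coHoch}(C))=F^0/F^1=E_\infty^{0,\ast}=H^\ast(C^\vee)$ with the isomorphism induced by $p$. The main technical obstacle is the sign-tracking combinatorics identifying $d_1$ with the cobar differential of the trivial coalgebra $I$; once that is in place, the rest is formal.
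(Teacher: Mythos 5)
Your proof is correct and follows essentially the same route as the paper: the same column filtration, the identification of every $E_1$-column with $H^\ast(C^\vee)$ via $\tilde\epsilon_\ast$ (using K-projectivity of $C$ and the hypothesis on $C^{\otimes n}$), and the observation that the homotopy counit axiom reduces the $E_1$-differential to the alternating-sum (cobar) differential of the trivial coalgebra $I$, which kills all columns $\ge 1$. Your explicit treatment of completeness/convergence of the filtration is in fact slightly more careful than the paper's, which passes directly from the $E_2$-comparison to the conclusion.
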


\begin{proof}
Put $\CC=\CC_{\coHoch}(C)$. We filter $\Sigma\CC$
by 
\[
F^p \Sigma \CC=\prod_{m\ge p} \Hom(\Sigma^{-1}C,(\Sigma^{-1}C)^{\otimes m})
\]
The differential on
$\Sigma\CC$ is given by $[d+\xi,-]$
 and it is compatible with
this filtration. So it it is sufficient that 
\[
p:\Sigma \CC\r \Sigma (C^\vee)
\]
induces in isomorphism on $E_2$-pages. The $E_1$-page amounts
to taking the differential for $[d+\xi_1,-]$. The $E_1$-page
for $\Sigma(C^\vee)$ is 
\[
E^{pq}_1=
\begin{cases}
H^q(\Hom_{\Ascr}(\Sigma^{-1}C,I)) & \text{if $p=0$}\\
0&\text{otherwise}
\end{cases}
\]
with zero differential.
Using the hypotheses on $C$ we find that the $E_1$-page for $\Sigma \CC$ is 
\[
E^{pq}_1=H^q(\Hom_{\Ascr}(\Sigma^{-1}C,I)) 
\]
for all $p,q$ and the differential is given by $d_2=[\xi_2,-]$.
The morphism between the $E_1$-pages is still the projection
on the first column.

Using the homotopy
  counit axiom, it is easily seen that $d^{pq}_2 = 0$ for $p$
  even and~$d^{pq}_2$ is an isomorphism for $p$ odd (see the
  proof of Proposition \ref{propalt}, replacing the counit axiom by
  the homotopy counit axiom). Hence $\Sigma C^\vee$ and $\Sigma \CC$
have identical $E_2$-pages.
This finishes the proof.
\end{proof}
\begin{proof}[Proof Theorem \ref{th:mainth}]
We take a projective resolution $P^\bullet \r I$ consisting of projectives $P$ such that $P\otimes-$ is exact. Then we use Proposition \ref{propcoalgP} and Remark \ref{rem:exact}
to equip $P^\bullet$ with an $A_\infty$-coalgebra structure. We put $\bold{B}=\CC_{\coHoch}(P^\bullet)$. By combining Proposition \ref{prop:mainprop} and Corollary \ref{cor:quasi} with $C=P^\bullet$
we obtain an $A_\infty$-quasi-isomorphism $\bold{B} \r \End_{\Ascr}(P^\bullet)=\bold{A}$.
\end{proof}

\section{Semi-coalgebras}
\label{sec:semico}
\subsection{Introduction}
The rest of this paper is devoted to proving an agreement property
between the usual $B_\infty$-structure on the Hochschild complex of a
$k$-algebra (see Proposition \ref{Binftycohoch}(1)), and the one
furnished by Theorem \ref{th:mainth} which uses coalgebras in an essential way.
To this end  we introduce below a kind of structure which is a mixture of a
category and a cocategory. More precisely we partition the vertices of
a graph with edges in $\Sscr$ in two sets consisting respectively of
``objects'' and ``co-objects''. The objects span a category, the
co-objects span a cocategory and the $\Hom$-spaces connecting objects
to co-objects, and vice versa, are modules on one side and comodules
on the other side. Such a structure could reasonably be called a
semi-co-category.  We show that it is possible to associate a
Hochschild complex to a semi-co-category which has the structure of a
$\Xi B_\infty$-algebra.

For simplicity we explain everything in the case that there is a single object $\alpha$ and a single co-object $\gamma$. The general case is similar.
\subsection{Setup}
Let $\B$ be a bicategory enriched in $\sss$, in the sense that for objects $\beta, \beta' \in \B$ we have a category $\B(\beta, \beta')$ enriched over $\sss$ and the compositions $\circ: \B(\beta', \beta'') \times \B(\beta, \beta') \lra \B(\beta, \beta'')$ are $\sss$-bifunctors equipped with the standard associativity data (which will be suppressed below). Note in particular that $\B(\beta)$ is a monoidal category.  Since
we want to emphasise the monoidal nature of our constructions we write $X\otimes Y=Y\circ X$.

The objects of $\B$ will become the objects or co-objects in a semi-co-category. Hence in accordance with the simplifying hypothesis made in the introduction, we assume $\Ob(\B) = \{\alpha, \gamma\}$. 
\subsection{The restricted endomorphism properad}
\label{sec:restricted_end}
Let the bicategory $\B$ be as before. We consider four colours $\underline{A} = (\alpha, \alpha)$, $\underline{M} = (\alpha, \gamma)$, $\underline{C} = (\gamma, \gamma)$ and $\underline{N} = (\gamma, \alpha)$. To a choice of objects $A \in \B(\alpha, \alpha)$, $M \in \B(\alpha, \gamma)$, $C \in \B(\gamma, \gamma)$ and $N \in \B(\gamma, \alpha)$ we associate the \emph{coloured endomorphism properad} $\eee nd = \eee nd_{A, M, C, N}$ in $\sss$ with
\begin{equation}\label{eqend}
\eee nd(\underline{X}_1, \dots, \underline{X}_n; \underline{Y}_1, \dots, \underline{Y}_m) = \Hom(X_1 \otimes \dots \otimes X_n, Y_1 \otimes \dots \otimes Y_m)
\end{equation}
for $X_i, Y_i \in \{A, M, C, N\}$ whenever the expression on the right makes sense and zero otherwise. Here, we require in particular that for $\underline{X}_i = (x^i_0, x^i_1)$ and $\underline{Y}_i = (y^i_0, y^i_1)$ we have $x^1_0 = y^1_0 = \beta_0$ and $x^n_1 = y^m_1 = \beta_1$ and the $\Hom$ in \eqref{eqend} is computed in $\B(\beta_0, \beta_1)$.

Next, we introduce the \emph{restricted endomorphism properad}
$\eee nd^{\ast} = \eee nd^{\ast}_{A, M, C, N}$ where we impose further
conditions upon the sequences $({\underline{X}}_i)_i$ and $({\underline{Y}}_i)_i$ in order for
$\eee nd^{\ast}$ to take the possibly non-zero value
\eqref{eqend}. More precisely, we require that all the values $x^i_j$
are equal to $\alpha$ except possibly $x^1_0$ and $x^n_1$, and all the
values $y^i_j$ are equal to $\gamma$ except possibly $y^1_0$ and
$y^n_1$. Concretely, the allowable input colour sequences are
\begin{itemize}
\item subsequences of $(\underline{N}, \underline{A}, \dots, \underline{A}, \underline{M})$
\item $(\underline{C})$
\end{itemize}
Here, we make the convention that the empty input sequence $()$ has colour $\underline{A}=(\alpha,\alpha)$, and in \eqref{eqend} we interpret it as $A^{\otimes 0}$ which, by convention, is the tensor unit of $\B(\alpha, \alpha)$.
Similarly, the allowable output colour sequences are
\begin{itemize}
\item subsequences of $(\underline{M}, \underline{C}, \dots, \underline{C}, \underline{N})$
\item $(\underline{A})$
\end{itemize}
This time, we make the convention that the empty output sequence $()$ has colour $\underline{C}=(\gamma,\gamma)$, and in \eqref{eqend} we interpret it as $C^{\otimes 0}$ which is the tensor unit of $\B(\gamma, \gamma)$.
Taking begin and end points into account, this leaves us with the following allowable combinations:
\begin{itemize}
\item[(a)] $(\underline{A},  \dots, \underline{A}, \underline{M}\,; \underline{M}, \underline{C}, \dots, \underline{C})_{\alpha\gamma}$ 
\item[(b)] $(\underline{N}, \underline{A}, \dots, \underline{A}\, ; \underline{C},  \dots, \underline{C}, \underline{N})_{\gamma\alpha}$ 
\item[(c)] $(\underline{A}, \dots, \underline{A}\,; \underline{M}, \underline{C}, \dots, \underline{C}, \underline{N})_{\alpha\alpha}$ 
\item[(d)] $(\underline{N}, \underline{A}, \dots, \underline{A}, \underline{M}\,; \underline{C}, \dots,  \underline{C})_{\gamma\gamma}$ 
\item[(e)] $(\underline{A},  \dots,  \underline{A}\,; \underline{A})_{\alpha\alpha}$
\item[(f)] $(\underline{C}\,; \underline{C},  \dots, \underline{C})_{\gamma\gamma}$
\end{itemize} 
where the variable length subsequences denoted by $\underline{A},\ldots,\underline{A}$ and $\underline{C},\ldots,\underline{C}$ may be empty.

To prove that $\uEnd^{\ast}$ is a suboperad of $\uEnd$ we show that is it is closed under elementary compositions (see \S\ref{sec:generalities}). By the restrictions induced by the 
colours it turns out that all elementary compositions have connection arity one. 

We list the different elementary
compositions $\phi\circ \psi$ in the following table, with the types for $\phi$
listed in the left column and the types for $\psi$ listed in the
upper row. We also indicate the colour of the connecting edge.
$$\begin{array}{c|cccccc} \label{arrayprop}
{\circ} & a & b & c & d & e & f \\
\hline
a & a_{\underline{M}} & 0 & c_{\underline{M}} & 0 & a_{\underline{A}} & 0 \\ 
b & 0 & b_{\underline{N}} & c_{\underline{N}} & 0 & b_{\underline{A}} & 0 \\ 
c & 0 & 0 & 0 & 0 & c_{\underline{A}} & 0 
\\ d & d_{\underline{M}} & d_{\underline{N}} & a_{\underline{N}}/b_{\underline{M}} & 0 & d_{\underline{A}} & 0 
\\ e & 0 & 0 & 0 & 0 & e_{\underline{A}} & 0 
\\ f & a_{\underline{C}} & b_{\underline{C}} & c_{\underline{C}} & d_{\underline{C}} & 0 & f_{\underline{C}} 
\end{array}$$

\begin{proposition}\label{proplist}
The following hold true:
\begin{enumerate}
\item $\eee nd^{\ast}$ is a sub-properad of $\eee nd$;
\item $\uEnd^\ast$ has bounded connectivity (in fact all connection arities are 1).
\item $\eee nd^{\ast}$ has sub-properads $\aaa$ generated by elements of type (e), $\ccc$ generated by elements of type (f) and $\mmm$ generated by elements of the four other types, and these satisfy $\eee nd^{\ast} = \aaa \oplus \mmm \oplus \ccc$;
\item $\mmm$ is a two-sided properadic ideal in $\eee nd^{\ast}$;
\item in order for a non-trivial composition to land in $\aaa$, all components have to be in $\aaa$;
\item in order for a non-trivial composition to land in $\ccc$, all components have to be in $\ccc$.
\end{enumerate}
\end{proposition}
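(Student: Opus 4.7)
The plan is to reduce everything to a systematic verification of the $6 \times 6$ composition table, driven by the rigidity of the colour constraints on the allowable input/output sequences. The key structural observation is that among the types (a)--(f), the colours $\underline{M}$ and $\underline{N}$ can occur only at fixed boundary positions (leftmost or rightmost), the colour $\underline{A}$ appears as an output only in type (e), and the colour $\underline{C}$ appears as an input only in type (f). This rigidity is what drives all six claims.

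First I would prove (1) and (2) simultaneously by analysing elementary compositions $\phi \circ \psi$ of operations in $\uEnd^\ast$. Each possible connecting edge is severely constrained by its colour: an $\underline{A}$-edge forces $\psi$ to be of type (e) (which has a single output), a $\underline{C}$-edge forces $\phi$ to be of type (f) (which has a single input), while $\underline{M}$- and $\underline{N}$-edges must occupy unique boundary positions in both $\phi$ and $\psi$. Moreover, planarity together with colour compatibility rules out combining edges of different colours in the same elementary composition (for instance, an $\underline{M}$-edge and a $\underline{C}$-edge would simultaneously force $\phi$ to be of type (f) and of type (a) or (d)). This immediately gives connection arity exactly one, proving (2). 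For (1) one then enumerates the compatible pairs of types, identifies which boundary position can be connected, and computes the input/output colour sequence obtained after the planar insertion of $\psi$'s remaining legs into the place of the consumed leg of $\phi$; each such composition is seen to land in one of the types (a)--(f), yielding the displayed table. Since any properad composition factors into elementary ones, closure for general compositions follows by iterating.

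With the table in hand, claims (3)--(6) are direct bookkeeping. The decomposition $\uEnd^\ast = \aaa \oplus \mmm \oplus \ccc$ in (3) is immediate from the mutually exclusive boundary-colour signatures of type (e), type (f), and types (a)--(d) respectively, and each summand is a sub-properad by inspecting the corresponding diagonal blocks ($e \circ e = e_{\underline{A}}$, $f \circ f = f_{\underline{C}}$, and the (a,b,c,d)-block lies within types $\{a,b,c,d\}$). For (4), reading off the rows and columns indexed by types (a)--(d) shows that every non-zero entry again lies in types (a)--(d); the only case requiring slight care is $d \circ c$, which produces the two options $a_{\underline{N}}$ or $b_{\underline{M}}$, both in $\mmm$. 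Finally (5) and (6) follow from the fact that the sole table entries landing in type (e) or (f) are $e \circ e$ and $f \circ f$ respectively, and an induction on the number of operations extends these conclusions from elementary compositions to arbitrary ones.

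The main obstacle is not conceptual but purely combinatorial: carefully enumerating the thirty-six ordered pairs of types and checking in each case that the planar insertion produces exactly the claimed output colour sequence. The payoff of this verification is the rather strong statement that the connection arity of every elementary composition equals one, which is what gives $\uEnd^\ast$ bounded connectivity in the sense of \S\ref{sec:generalities} and makes it amenable to Proposition \ref{proppropBinfty}.
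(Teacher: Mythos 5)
Your proposal is correct and follows essentially the same route as the paper: the paper derives the same composition table from the colour constraints (noting en route that all elementary compositions have connection arity one) and then proves the proposition by the one-line remark that all six claims follow from inspection of that table. Your write-up simply makes that inspection explicit, including the correct handling of the only mildly delicate entries ($d\circ c$ and the fact that only $e\circ e$ and $f\circ f$ land in types (e) and (f)).
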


\begin{proof}
These observations immediately follow from inspection of the composition table.
\end{proof}
In order to define semi-coalgebra structures, we consider the weight function $w$ which assigns the weights $(1, 0, -1, 0)$ to the colours $(\underline{A}, \underline{M}, \underline{C}, \underline{N})$.
We define the \emph{semi-co-Hochschild object} of $(A,M,C,N)$ by
\begin{equation}
\label{def:schoch}
\Sigma \CC_{\scHoch}(A,M,C,N) = \CC(\Xi^w \eee nd^{\ast}_{A,M,C,N}).
\end{equation}
If $N=0$ then we omit it from the notations. E.g.\ we
write $\CC_{\scHoch}(A,M,C)=\CC_{\scHoch}(A,M,C,N)$.
\begin{definition}
\label{def:semico}
A \emph{semi-coalgebra structure} on $(A,M,C,N)$ is a degree $1$ element $\xi \in \Sigma \CC_{\scHoch}(A,M,C,N)$ satisfying the Maurer Cartan equation (see \eqref{eqMC})
\begin{equation}
\label{eq:mc2}
Q^1\xi+\xi \bullet \xi = 0
\end{equation}
such that all components of $\xi$ in $\Hom(X_1 \otimes \dots \otimes X_m, Y_1 \otimes \dots \otimes Y_n)$ are zero except for $(m,n)=(1,2)$ and $(m,n)=(2,1)$.
\end{definition}
\begin{remark} Note that as explained in \S\ref{sec:binftyproperads}
$Q^1$ in \eqref{eq:mc2} is obtained by extending the differential on the 2-morphisms in $\B$ (which is zero if $\sss=G(k)$). So it makes sense to
denote $Q^1$ simply by ``$d$''.
\end{remark}
\begin{remark}
By relaxing the conditions on arities, one naturally obtains $\infty$-versions and even curved versions of the concept of a semi-coalgebra.
\end{remark}
From Propositions \ref{proppropBinfty} and \ref{proptwist} we immediately obtain:
\begin{proposition}
\label{prop:twist2}
Suppose $(A,M,C,N)$ as above is endowed with a semi-coalgebra structure $\xi$. Then $\Sigma \CC_{\scHoch}(A,M,C,N)$ becomes a $\Xi B_{\infty}$-algebra with differential $Q^1+[\xi, -]$ for the commutator with respect to the dot product.
\end{proposition}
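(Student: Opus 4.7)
The plan is to assemble the statement directly from the two previously established general results on properads, namely Proposition \ref{proppropBinfty} (construction of the $\Xi B_\infty$-structure on $\CC(\Oscr)$ for a properad of bounded connectivity) and Proposition \ref{proptwist} (twisting a $\Xi B_\infty$-structure by a Maurer--Cartan element). There is essentially no new content beyond verifying that the hypotheses of these two propositions are fulfilled for the coloured properad $\Xi^w \eee nd^{\ast}_{A,M,C,N}$ and the element $\xi$.

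First I would observe that by Proposition \ref{proplist}(2), the restricted endomorphism properad $\eee nd^{\ast}_{A,M,C,N}$ has bounded connectivity; in fact, inspection of the composition table shows that every non-trivial elementary composition has connection arity exactly $1$, so no elementary composition of connection arity $>1$ is ever non-zero. This property is preserved under the weighted shift $\Xi^w$, since shifting merely re-grades the underlying $\sss$-objects without altering which elementary compositions can be non-zero. Hence Proposition \ref{proppropBinfty} (in its coloured version, whose extension is noted just after its statement) applies and endows $\CC(\Xi^w \eee nd^{\ast}_{A,M,C,N}) = \Sigma \CC_{\scHoch}(A,M,C,N)$ with a $\Xi B_\infty$-algebra structure whose differential is the internal one $Q^1$ and whose dot product is given by the brace operation $m^{1,1}$ induced from the elementary compositions of $\Xi^w \eee nd^{\ast}_{A,M,C,N}$.

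Second, the semi-coalgebra structure $\xi$ is by Definition \ref{def:semico} an element of degree $1$ in $\Sigma \CC_{\scHoch}(A,M,C,N)$ satisfying the Maurer--Cartan equation $Q^1\xi + \xi \bullet \xi = 0$, which is exactly \eqref{eqMC} for the $\Xi B_\infty$-algebra constructed in the previous step. Proposition \ref{proptwist} then applies verbatim and produces the twisted $\Xi B_\infty$-structure with differential $Q^1 + [\xi, -]$, as required.

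The only point meriting a line of verification (and the closest thing to an ``obstacle'') is the bounded connectivity claim for the coloured weighted properad. This is essentially bookkeeping: the colour restrictions on input and output sequences listed in cases (a)--(f) force every non-trivial elementary composition to glue along a single strand of one of the colours $\underline{A}, \underline{M}, \underline{C}, \underline{N}$, as recorded in the composition table; the weight shift $\Xi^w$ has no effect on this combinatorics. Once this is noted, both invoked propositions apply without modification and the proof is concluded.
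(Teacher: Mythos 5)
Your proposal is correct and follows exactly the route the paper takes: the paper derives this proposition immediately from Propositions \ref{proppropBinfty} and \ref{proptwist}, using the bounded connectivity of $\eee nd^{\ast}$ recorded in Proposition \ref{proplist}(2) and the fact that a semi-coalgebra structure is by definition a Maurer--Cartan element. Your additional remark that the weighted shift $\Xi^w$ does not affect which elementary compositions are non-zero is a correct and worthwhile piece of bookkeeping that the paper leaves implicit.
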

\begin{remark}
\label{rem:twist}
Sometimes we write $\Sigma \CC_{\scHoch,\xi}(A,M,C,N)$ for $\Sigma\CC_{\scHoch}(A,M,C,N)$ when equipped with the ``twisted'' $\Xi B_\infty$-structure obtained from $\xi$, exhibited in Proposition \ref{prop:twist2}. We use similar self explanatory notations such as $\Sigma \CC_{\Hoch,\xi}(A)$ and $\Sigma\CC_{\coHoch,\xi}(C)$.
\end{remark}
\medskip

When looking at the subproperads $\aaa$ and $\ccc$ of $\eee nd^{\ast}$, we observe that the corresponding subproperads of $\Xi^w \eee nd^{\ast}$ are given by the weighted endomorphism operad $\Xi \End_A$ associated to the object $A \in \B(\alpha, \alpha)$ and the weighted co-operad $\Xi^{-1}\End_C$ associated to the object $C \in \B(\gamma, \gamma)$. We thus recognise the corresponding Hochschild object $\Sigma \CC_{\Hoch}(A) = \CC(\Xi \End_A)$ from \eqref{Hochobj} and co-Hochschild object $\Sigma \CC_{\coHoch}(C) = \CC(\Xi^{-1} \End^{\mathrm{co}}_C)$ from \eqref{coHochobj}.

\begin{proposition}
\label{prop:piApiC}
The two canonical projections
$$\pi_A: \Sigma \CC_{\scHoch}(A,M,C,N) \lra \Sigma \CC_{\Hoch}(A)$$
$$\pi_C: \Sigma \CC_{\scHoch}(A,M,C,N) \lra \Sigma \CC_{\coHoch}(C)$$
are morphisms of $\Xi B_{\infty}$-algebras. If $\xi$ is a
semi-coalgebra structure on $(A,M,C,N)$, then $\pi_A(\xi)$ defines a
dg-algebra structure on $A$ and $\pi_C(\xi)$ defines a dg-coalgebra
structure on $C$. For the three resulting twisted
$\Xi B_{\infty}$-structures, $\pi_A$ and $\pi_C$ become morphisms of
$\Xi B_{\infty}$-algebras.
\end{proposition}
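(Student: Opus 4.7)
The plan is to realise $\pi_A$ and $\pi_C$ as arising from properad quotient maps, and then invoke the naturality of the $\Xi B_\infty$-construction of Proposition \ref{proppropBinfty}.

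First I would show that the $\sss$-summands $\mmm \oplus \ccc$ and $\mmm \oplus \aaa$ of $\eee nd^{\ast}$ are each two-sided properadic ideals. By item (4) of Proposition \ref{proplist}, $\mmm$ is already a two-sided ideal. Item (5) says that a non-trivial composition lands in $\aaa$ only if all of its components lie in $\aaa$; by contraposition this means that whenever at least one component belongs to $\mmm \oplus \ccc$, the composition again lies in $\mmm \oplus \ccc$, so $\mmm \oplus \ccc$ is a two-sided ideal. Item (6) gives the analogous statement for $\mmm \oplus \aaa$. Passing to the weighted properad $\Xi^w \eee nd^{\ast}$ (which only changes gradings and signs) and to the quotients, we obtain properad morphisms $\Xi^w \eee nd^{\ast} \to \Xi \End_A$ and $\Xi^w \eee nd^{\ast} \to \Xi^{-1} \End^{\mathrm{co}}_C$.

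Second, since the LR-brace operations of \eqref{eq:moperations} are defined entirely in terms of the properad composition and the identity element, they are natural in properad morphisms. A properad morphism $f: \ooo \to \ooo'$ thus induces a $\Xi B_\infty$-morphism $\CC(\ooo) \to \CC(\ooo')$, and this passes to the product $\CC_\Pi$ since $\pi_A$ and $\pi_C$ are the obvious coordinate projections onto the sub-$\sss$-modules $\aaa \subset \eee nd^{\ast}$ and $\ccc \subset \eee nd^{\ast}$ (finite sums in the brace formulas project termwise). This gives the first assertion. Comparing \eqref{def:schoch} with \eqref{Hochobj} and \eqref{coHochobj} identifies the target $\Xi B_\infty$-algebras with $\Sigma \CC_{\Hoch}(A)$ and $\Sigma \CC_{\coHoch}(C)$.

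Third, since $\pi_A$ and $\pi_C$ are $\Xi B_\infty$-morphisms, they preserve $Q^1$ and the dot product and therefore map solutions of the Maurer--Cartan equation \eqref{eq:mc2} to solutions. By Definition \ref{def:semico} the only non-zero components of $\xi$ are in arities $(2,1)$ and $(1,2)$; inspecting the colour list (a)--(f), the only $(2,1)$-component lying in $\aaa$ is $\xi_{2,1}\colon A \otimes A \to A$ (of type (e)), and the only $(1,2)$-component lying in $\ccc$ is the map $C \to C \otimes C$ (of type (f)). Since the higher components $\xi_n$ of the $A_\infty$-(co)algebra structure thus vanish, $\pi_A(\xi)$ encodes a strict dg-algebra structure on $A$ and $\pi_C(\xi)$ a strict dg-coalgebra structure on $C$.

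Finally, twisting by a Maurer--Cartan element as in Proposition \ref{proptwist} only modifies the codifferential $Q$ to $Q + [\xi, -]$ while leaving the product $m$ untouched. Since $\pi_A$ already intertwines $Q^1$ and $[-,-]$ (being a $\Xi B_\infty$-morphism) and sends $\xi$ to $\pi_A(\xi)$, the twisted differential $Q^1 + [\xi, -]$ on the source is sent to $Q^1 + [\pi_A(\xi), -]$ on the target; the same argument applies to $\pi_C$. The principal verification throughout is the ideal property of $\mmm \oplus \ccc$ and $\mmm \oplus \aaa$, which reduces to the combinatorial inspection of the composition table already summarised in Proposition \ref{proplist}.
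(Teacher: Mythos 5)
Your proof is correct and follows exactly the route the paper intends: the paper's own proof is the one-line "This easily follows from Proposition \ref{proplist}," and your argument is precisely the expected unpacking of that claim (items (4)--(6) giving the ideal property of $\mmm\oplus\ccc$ and $\mmm\oplus\aaa$, naturality of the brace construction under properad quotients, and compatibility of twisting with $\Xi B_\infty$-morphisms).
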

\begin{proof}
This easily follows from Proposition \ref{proplist}.
\end{proof}
A semi-coalgebra structure consists of 8 operations of degree one 
which may be written in terms of operations in $\uEnd^\ast$ (see \S\ref{sec:shifted}).
When applied to operations we define $\Xi^w$ as $\Xi_{\underline{A}} \Xi^{-1}_{\underline{C}}$.
Note however  $\Xi_{\underline{A}}$ and $\Xi_{\underline{C}}$ commute when acting on $\uEnd^\ast$
since $\underline{A}$ and $\underline{C}$ are never together colours of input or outputs
of a non-zero operation.

The resulting operations in $\uEnd$ are as in the following table. 
\begin{equation}
\label{eq:operations}
\begin{aligned}
\xi_{\underline{A},\underline{A};\underline{A}}&:=\Xi^w(m)&&\text{ with }m\in Z^0\Hom(A\otimes A,A),\\
\xi_{\underline{C};\underline{C},\underline{C}}&:=\Xi^w(\Delta)&&\text{ with }\Delta\in Z^0\Hom(C,C\otimes C),\\
\xi_{\underline{A},\underline{M};\underline{M}}&:=\Xi^w(\mu_M)&&\text{ with }\mu_M\in Z^0\Hom(A\otimes M,M),\\
\xi_{\underline{M};\underline{M},\underline{C}}&:=-\Xi^w(\delta_M)&&\text{ with }\delta_M\in Z^0\Hom(M,M\otimes C),\\
\xi_{\underline{N},\underline{A};\underline{N}}&:=\Xi^w(\mu_N)&&\text{ with }\mu_N\in Z^0\Hom(N\otimes A,N),\\
\xi_{\underline{N};\underline{C}\underline{N}}&:=-\Xi^w(\delta_N)&&\text{ with }\delta_N\in Z^0\Hom(N,C\otimes N),\\
\xi_{\underline{N},\underline{M};\underline{C}}&:=\Xi^w(\mu_{N,M})&&\text{ with }\mu_{M,N}\in Z^0\Hom(N\otimes M,C),\\
\xi_{\underline{A};\underline{M},\underline{N}}&:=\Xi^w(\delta_{M,N})&&\text{ with }\delta_{N,M}\in Z^0\Hom(A,M\otimes N).
\end{aligned}
\end{equation}
\subsection{The case \boldmath $N=0$}\label{parstructure}
The Maurer Cartan equation \eqref{eq:mc2} translates into a number of easily guessed quadratic relations  between the operations
exhibited in \eqref{eq:operations}.
We list these in the case $N = 0$ (writing $\delta=\delta_M$, $\mu=\mu_M$, and observing that of course $\mu_{N,M}=0$, $\delta_{M,N}=0$, $\mu_N=0$, $\delta_N=0$).
\begin{proposition}
Maurer Cartan equation is equivalent to the following relations:
\begin{itemize}
\item[Assoc($m$):] $m(m \otimes \id) = m(\id \otimes m)$
\item[Assoc($m, \mu$):] $\mu(m \otimes \id) = \mu(\id \otimes \mu)$
\item[Coassoc($\Delta$):] $(\id \otimes \Delta)\Delta = (\Delta\otimes \id)\Delta$
\item[Coassoc($\Delta, \delta$):] $(\id \otimes \Delta)\delta = (\delta \otimes \id)\delta$
\item[Comp($\mu, \delta$):] $(\mu \otimes \id)(\id \otimes \delta) = \delta \mu$
\end{itemize}
\end{proposition}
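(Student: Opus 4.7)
The plan is to exploit the fact that each of the eight components listed in \eqref{eq:operations} lies in $Z^0$ of the internal differential, so that $Q^1\xi=0$ and the Maurer--Cartan equation \eqref{eq:mc2} reduces to the purely quadratic equation $\xi\bullet\xi=0$. I then expand $\xi\bullet\xi$ via the brace $B^1_1$ from Proposition \ref{proppropBinfty}: on properad elements it is the sum of all two-operation elementary compositions of the components of $\xi$ (with identities inserted on either side to match arities). By Proposition \ref{proplist}, all such compositions have connection arity one, and the allowable pairings are read off from the composition table of that proposition. When $N=0$, only the four components $m$, $\mu$, $\Delta$, $\delta$ survive, and only the entries for types (a), (e), (f) in the table can contribute.

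Next I would stratify the identity $\xi\bullet\xi=0$ by the input/output colour profile of the composite and match each profile to one of the listed relations. Profile $(\underline{A},\underline{A},\underline{A})\to(\underline{A})$ records $e\circ e$ and returns $\mathrm{Assoc}(m)$. Profile $(\underline{A},\underline{A},\underline{M})\to(\underline{M})$ combines $a\circ e$ (plugging $m$ into the $\underline{A}$-slot of $\mu$) with $a\circ a$ (nesting $\mu$ in $\mu$ via an $\underline{M}$-connection), yielding $\mathrm{Assoc}(m,\mu)$. Profile $(\underline{C})\to(\underline{C},\underline{C},\underline{C})$ comes from $f\circ f$ and returns $\mathrm{Coassoc}(\Delta)$, while $(\underline{M})\to(\underline{M},\underline{C},\underline{C})$ combines $f\circ a$ with an $a\circ a$ nesting of $\delta$ through an $\underline{M}$-connection, returning $\mathrm{Coassoc}(\Delta,\delta)$. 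Finally, profile $(\underline{A},\underline{M})\to(\underline{M},\underline{C})$ collects exactly the two type-$a\circ a$ compositions that can be formed from $\mu$ and $\delta$ (connecting $\underline{M}$ in the two possible orders), and these deliver $\mathrm{Comp}(\mu,\delta)$.

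The main obstacle is sign bookkeeping. The shift $\Xi^w = \Xi_{\underline{A}}\Xi^{-1}_{\underline{C}}$ applied to each operation introduces Koszul signs via \eqref{eq:shiftedprop1}, in the same spirit as Example \ref{ex:assoc}, and the dot product formula \eqref{eq:prelie} introduces a second layer of rearrangement signs. My strategy would be to carry out one case (say $\mathrm{Assoc}(m)$, along the lines of the explicit calculation in Example \ref{ex:assoc}) in full to fix the sign convention, and then argue by the same template in each remaining case, observing at the end that the minus signs put in front of $\delta_M$ and $\delta_N$ in \eqref{eq:operations} are exactly what is required so that, after all suspensions are unravelled, the relations emerge with the clean signs displayed in the proposition rather than alternating signs. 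The verification of $\mathrm{Comp}(\mu,\delta)$ is the most delicate because it is the only relation in which both colour shifts $\Xi_{\underline{A}}$ and $\Xi^{-1}_{\underline{C}}$ intervene nontrivially on the same composition; here one uses that $\Xi_{\underline{A}}$ and $\Xi_{\underline{C}}$ commute on $\uEnd^{\ast}$ (the colours $\underline{A}$ and $\underline{C}$ never occur together among the inputs or outputs of a non-zero operation), which ensures that both sides of $(\mu\otimes\id)(\id\otimes\delta)=\delta\mu$ pick up the same overall sign.
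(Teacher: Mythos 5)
Your proposal is correct and takes essentially the same route as the paper, whose entire proof is the remark that this is a ``somewhat tedious, but straightforward verification, similar to Example \ref{ex:assoc}'': your reduction of Maurer--Cartan to $\xi\bullet\xi=0$ (since every component lies in $Z^0$), the stratification by colour profile read off from the composition table of Proposition \ref{proplist}, and the sign bookkeeping modelled on Example \ref{ex:assoc} are precisely the verification being alluded to. The additional detail you supply --- in particular checking that the five colour profiles exhaust all nonzero elementary compositions when $N=0$, and noting the role of the minus sign on $\delta_M$ in \eqref{eq:operations} --- is a faithful expansion of what the paper leaves implicit.
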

\begin{proof} This is a somewhat tedious, but straightforward verification, similar to Example \ref{ex:assoc}.
\end{proof}
Taken together, these relations express that $A$ is a an algebra in
$\B(\alpha, \alpha)$, $C$ is a coalgebra in
$\B(\gamma, \gamma)$ and $M$ is a an object in
in $\B(\alpha, \gamma)$, with a left action $\mu$ of $A$ and a right
coaction $\delta$ of $C$ satisfying the natural compatibility
condition Comp($\mu, \delta$).
\subsection{The anatomy of the semi-co-Hochschild complex}
\label{sec:anatomy}
Let $\xi$ be a semi-coalgebra structure on $(A,M,C)$. Below we look in more detail at the structure of $\Sigma\CC_{\scHoch,\xi}(A,M,C)$ as a complex. For simplicity we will assume that
the categories $\B(\ast,\ast)$ are closed under suspensions and desuspensions so that in particular Example \ref{ex:weighted_end} applies to the properad $\uEnd$.
Note that, as explained in \S\ref{sec:notconv}, this is not a serious restriction.

We have
\begin{equation}
\label{eq:decomposition}
\Sigma \CC_{\scHoch}(A,M,C)=\Sigma \CC_{\Hoch}(A)\oplus \Sigma \CC_{\scHoch}(M)\oplus \Sigma\CC_{\coHoch}(C)
\end{equation}
where
\begin{align*}
\Sigma\CC_{\Hoch}(A)&=\prod_{m\ge 0} \Sigma \CC^m_{\Hoch}(A)&&=\prod_{m\ge 0} \Hom((\Sigma A)^{\otimes m},\Sigma A)\\
\Sigma\CC_{\coHoch}(C)&=\prod_{n\ge 0}\Sigma\CC^n_{\coHoch}(C)&&=\prod_{n\ge 0} \Hom(\Sigma^{-1} C,(\Sigma^{-1} C)^{\otimes n})
\\
\Sigma\CC_{\scHoch}(M)&=\,\prod_{\text{\hskip -1cm\rlap{$m,n\ge 0$}}} \Sigma\CC^{m,n}_{\scHoch}(M)&&=\, 
\prod_{\text{\hskip -1cm\rlap{$m,n\ge 0$}}} \Hom((\Sigma A)^{\otimes m}\otimes M,M\otimes (\Sigma^{-1} C)^{\otimes n})
\end{align*}
The differential obtained from the semi-coalgebra structure decomposes as in the following diagram
\begin{equation}
\label{eq:anatomydiff}
\xymatrix{%
\Sigma\CC_{\Hoch}(A)
\ar@(dl,dr)_{Q^1+\partial_A}
\ar[rr]^{f_{A,M}}
&&\Sigma \CC_{\scHoch}(M)\ar@(dl,dr)_{%
Q^1+\partial_{A,M}+\partial_{M,C}
}
&&\Sigma\CC_{\coHoch}(C)\ar[ll]_{f_{M,C}}
\ar@(dl,dr)_{Q^1+\partial_C}
}
\end{equation}
with
\begin{equation}
\label{eq:components}
\begin{aligned}
\partial^m_A&=[\xi_{\underline{A},\underline{A};\underline{A}},-]:&\Sigma\CC^m_{\Hoch}(A)&\r \Sigma\CC^{m+1}_{\Hoch}(A)\\
\partial^n_C&=[\xi_{\underline{C};\underline{C},\underline{C}},-]:&\Sigma\CC^n_{\coHoch}(C)&\r \Sigma\CC^{n+1}_{\coHoch}(C)\\
\partial^{m,n}_{A,M}&=[\xi_{\underline{A},\underline{M};\underline{M}},-]\pm -\bullet \xi_{\underline{A},\underline{A};\underline{A}}:
& \Sigma\CC^{m,n}_{\scHoch}(M)&\r \Sigma\CC_{\scHoch}^{m+1,n}(M)
\\
\partial_{M,C}^{m,n}&=[\xi_{\underline{M};\underline{M},\underline{C}},-]+\xi_{\underline{C};\underline{C},\underline{C}}\bullet -:
& \Sigma\CC^{m,n}_{\scHoch}(M)&\r \Sigma \CC^{m,n+1}_{\scHoch}(M)
\\
f_{A,M}^m&=\xi_{\underline{A},\underline{M};\underline{M}}\bullet-:& \Sigma\CC^{m}_{\Hoch}(A)&\r \Sigma\CC^{m,0}_{\scHoch}(M)\\
f_{M,C}^n&=\pm - \bullet \xi_{\underline{M},\underline{C};\underline{M}}:&\Sigma\CC^{n}_{\coHoch}(C)&\r \Sigma\CC^{0,n}_{\scHoch}(M)
\end{aligned}
\end{equation}
All maps in \eqref{eq:components} anti-commute with $Q^1$. Moreover we have
\begin{align*}
f_{A,M}\circ \partial_A+\partial_{A,M}\circ f_{A,M}&=0\\
f_{M,C}\circ\partial_C+\partial_{M,C}\circ f_{M,C}&=0\\
\partial_{M,C}\circ f_{A,M}&=0\\
\partial_{A,M}\circ f_{M,C}&=0\\
\partial_{A,M}\circ \partial_{M,C}+\partial_{M,C}\circ \partial_{A,M}&=0
\end{align*}
In particular we obtain a
homotopy bicartesian square
of complexes of $k$-modules
\begin{equation}
\label{lem:bicartesian}
\xymatrix{ {\Sigma\CC_{\scHoch,\xi}(A,M,C)} \ar[r]^-{\pi_C} \ar[d]_{\pi_A}
& \Sigma\CC_{\coHoch,\xi_C}(C)\ar[d]^{f_{M,C}}\\
\Sigma \CC_{\Hoch,\xi_A}(A) \ar[r]_{f_{A,M}} & \Sigma \CC_{\scHoch,\xi}(M)
}
\end{equation}
where $\xi_A=\xi_{\underline{A},\underline{A};\underline{A}}$, $\xi_C=\xi_{\underline{C};\underline{C},\underline{C}}$ and the notations are conform with Remark \ref{rem:twist}.
In particular $\Sigma \CC_{\scHoch,\xi}(M)$ is the graded $k$-module $\Sigma \CC_{\scHoch}(M)$ equipped with the differential $Q^1+\partial_{A,M}+\partial_{M,C}$.
\begin{lemma}
\label{lem:piApiC}
\begin{itemize}
\item $\pi_A$ is a quasi-isomorphism if and only if $f_{M,C}$ is a quasi-isomorphism.
\item $\pi_C$ is a quasi-isomorphism if and only if $f_{A,M}$ is a quasi-isomorphism.
\end{itemize}
\end{lemma}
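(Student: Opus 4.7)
The plan is to derive the lemma directly from the homotopy bicartesian property of the square \eqref{lem:bicartesian}, which has already been recorded just above. The key input is the standard fact that in any homotopy bicartesian square of complexes of $k$-modules, parallel arrows have quasi-isomorphic mapping cones; consequently each pair of parallel arrows consists of quasi-isomorphisms simultaneously or not at all. Applied to \eqref{lem:bicartesian} this gives at once that $\pi_A$ is a quasi-isomorphism iff $f_{M,C}$ is, and $\pi_C$ is a quasi-isomorphism iff $f_{A,M}$ is.

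To keep the argument self-contained, one would exhibit the Mayer--Vietoris short exact sequence
\begin{equation*}
0 \to \Sigma\CC_{\scHoch,\xi}(M) \hookrightarrow \Sigma\CC_{\scHoch,\xi}(A,M,C) \xrightarrow{(\pi_A,\pi_C)} \Sigma\CC_{\Hoch,\xi_A}(A) \oplus \Sigma\CC_{\coHoch,\xi_C}(C) \to 0,
\end{equation*}
which is immediate from the direct sum decomposition \eqref{eq:decomposition} and the explicit description of the total differential in \eqref{eq:anatomydiff}--\eqref{eq:components}: the summand $\Sigma\CC_{\scHoch}(M)$ is a subcomplex (no component of the differential maps out of it), while the quotient differential reduces to $Q^1+\partial_A$ on the first factor and $Q^1+\partial_C$ on the second, matching the twisted complexes $\Sigma\CC_{\Hoch,\xi_A}(A)$ and $\Sigma\CC_{\coHoch,\xi_C}(C)$ respectively. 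Passing to cohomology produces the long exact sequence, whose connecting homomorphism is induced (with the standard sign) by $(f_{A,M},f_{M,C})$, as one sees by lifting $(a,c)$ in the quotient to $(a,0,c)$ and applying the total differential.

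The lemma then reduces to a routine diagram chase. If $f_{M,C}$ is a quasi-isomorphism, then in the long exact sequence the $H^{\ast}(\Sigma\CC_{\coHoch,\xi_C}(C))$ contributions cancel via the connecting map, leaving a four-term exact sequence that identifies $\pi_A$ with an isomorphism on cohomology; the converse is obtained by running the same chase backwards, and the $\pi_C$/$f_{A,M}$ case is entirely symmetric. I do not anticipate any essential obstacle: all the content of the lemma is already encoded in the homotopy bicartesian square, and its role is merely to record this immediate cohomological consequence for later use.
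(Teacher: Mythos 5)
Your proof is correct and is essentially the paper's own argument: the paper states the lemma without proof immediately after exhibiting the homotopy bicartesian square \eqref{lem:bicartesian}, and the intended justification is exactly the one you give (the sub/quotient complex structure visible in \eqref{eq:anatomydiff}--\eqref{eq:components}, equivalently the identification of $\ker\pi_A$ with the cone of $f_{M,C}$ and of $\ker\pi_C$ with the cone of $f_{A,M}$). Your Mayer--Vietoris chase is a slightly longer route to the same conclusion, but it is sound.
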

Using the appropriate spectral sequences we may attempt to ``decouple'' as much as possible the $A$-action
and the $C$-coaction on $M$.
\begin{lemma}
\label{lem:coarse}
\begin{enumerate}
\item 
Assume that for every $n$ we have that
\begin{equation}
\label{eq:leftKeller}
(\Sigma \CC^n_{\coHoch}(C),Q^1)\xrightarrow{f^n_{M,C}} (\Sigma \CC^{\bullet,n} (M),Q^1+\partial^{\bullet,n}_{A,M})
\end{equation}
is a quasi-isomorphism. Then $f_{M,C}$ is a quasi-isomorphism.
\item
Assume that for every $m$ we have that
\begin{equation}
\label{eq:rightKeller}
(\Sigma \CC^m_{\Hoch}(A),Q^1)\xrightarrow{f^m_{A,M}} (\Sigma \CC^{m,\bullet} (M),Q^1+\partial^{m,\bullet}_{M,C})
\end{equation}
is a quasi-isomorphism. Then $f_{A,M}$ is a quasi-isomorphism.
\end{enumerate}
\end{lemma}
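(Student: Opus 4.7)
The plan is to reduce each item to a standard comparison of filtered complexes, and I will describe (1) in detail; (2) will then be obtained by a symmetric argument, exchanging the roles of $m$ and $n$.

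First, I would equip both sides with a filtration by the co-Hochschild degree $n$:
\begin{equation*}
F^p \Sigma\CC_{\coHoch}(C) = \prod_{n\ge p} \Sigma\CC^n_{\coHoch}(C), \qquad
F^p \Sigma\CC_{\scHoch}(M) = \prod_{m\ge 0,\ n\ge p} \Sigma\CC^{m,n}_{\scHoch}(M).
\end{equation*}
Because the underlying objects are products, both filtrations are exhaustive, Hausdorff and complete. Inspecting the five components listed in \eqref{eq:components}, the summands $Q^1$ and $\partial_{A,M}$ preserve $n$ and hence preserve each $F^p$, whereas $\partial_C$ and $\partial_{M,C}$ strictly raise $n$ and send $F^p$ into $F^{p+1}$. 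Moreover $f^n_{M,C}$ sends $\Sigma\CC^n_{\coHoch}(C)$ into $\Sigma\CC^{0,n}_{\scHoch}(M) \subset F^n \Sigma\CC_{\scHoch}(M)$, so the total map $f_{M,C}$ is compatible with the filtrations.

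Second, I would pass to the associated spectral sequences. The differential on $E_0$ is the piece of the total differential which preserves $n$, namely $Q^1$ on the co-Hochschild side and $Q^1+\partial_{A,M}$ on the semi-co-Hochschild side. Hence
\begin{equation*}
E_1^{p,q}\bigl(\Sigma\CC_{\coHoch}(C)\bigr) = H^{p+q}\bigl(\Sigma\CC^p_{\coHoch}(C),\,Q^1\bigr),
\end{equation*}
\begin{equation*}
E_1^{p,q}\bigl(\Sigma\CC_{\scHoch}(M)\bigr) = H^{p+q}\bigl(\Sigma\CC^{\bullet,p}_{\scHoch}(M),\,Q^1+\partial^{\bullet,p}_{A,M}\bigr),
\end{equation*}
and by construction the map on $E_1$ induced by $f_{M,C}$ is exactly the collection of $H(f^p_{M,C})$, which is an isomorphism for every $p$ by the hypothesis of (1).

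Finally, I would appeal to the standard comparison theorem for complete Hausdorff decreasing filtered complexes: a filtered morphism which is an isomorphism on some $E_r$ is a quasi-isomorphism. This is precisely the same style of convergence argument already used in the proof of Proposition \ref{prop:mainprop}, so no new technology is required. The only point that deserves a line of justification is the convergence of the spectral sequence, but this is free here since the filtrations are by direct products and therefore automatically complete. Part (2) is obtained by filtering instead by the Hochschild degree $m$: now $Q^1$ and $\partial_{M,C}$ preserve $m$ while $\partial_A$ and $\partial_{A,M}$ strictly raise it, so the $E_0$ differential on $\Sigma\CC_{\scHoch}(M)$ becomes $Q^1+\partial^{m,\bullet}_{M,C}$, and the assumption on $f^m_{A,M}$ provides the required isomorphism on $E_1$. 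I expect the main conceptual point (not so much an obstacle as a bookkeeping step) to be the verification that the parts of the differential assigned to $d_0$ really are exactly those recorded in \eqref{eq:components}; once this is done the rest is a direct application of the comparison theorem.
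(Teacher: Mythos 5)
Your proof is correct and is exactly the argument the paper intends (the paper states the lemma without proof, but the preceding remark about ``the appropriate spectral sequences'' and the identical filtration-by-columns argument in the proof of Proposition \ref{prop:mainprop} confirm this is the intended route): filter by the (co-)Hochschild degree, check which pieces of \eqref{eq:components} preserve versus raise the filtration, identify the map on associated graded with the hypothesis, and conclude by completeness of the product filtration. The only point worth a remark is that your appeal to the ``isomorphism on some $E_r$'' comparison theorem is here applied at the level of the associated graded homology ($E_1$), where the cone argument combined with completeness of the filtration works without any of the convergence subtleties that can arise for higher $r$, so the proof is watertight as written.
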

Note that the right hand side  \eqref{eq:leftKeller} only depends on the $A$-action on
$M$ and \eqref{eq:rightKeller} only depends on the $C$-coaction on $M$. 
We will refer to (1) informally as the \emph{left Keller condition} on $(A,M,C)$ and to (2) as the \emph{right Keller condition}
on $(A,M,C)$.

\section{Variations on the bar-complex}\label{sec:varbar}
We recall some well-known results concerning the bar complex. Throughout $A = (A, m, 1)$ a unital $k$-algebra.
\subsection{Generalities on coalgebras}\label{parcoalg}

Let $(\Mod(k), \otimes, k)$ be the monoidal category of $k$-modules and $(\Bimod(A), \otimes_A, A)$ the monoidal category of $A$-bimodules.\footnote{Bimodules are always assumed to be $k$-central.} Consider the forgetful functor 
$$\Bimod(A) \lra \Mod(k): X \longmapsto \bar{X}$$
and its left adjoint
$$L: \Mod(k) \lra \Bimod(A): M \longmapsto A \otimes M \otimes A.$$

The functor $L$ is oplax monoidal. For the units $k \in \Mod(k)$ and $A \in \Bimod(A)$, the natural comparison map is given by $m: L(k) = A \otimes A \lra A$, and we have the natural comparison map
$$\phi: L(- \otimes_k -) \lra L(-) \otimes_A L(-)$$ with
$$\phi_{M,N}: A \otimes M \otimes N \otimes A \lra (A \otimes M \otimes A) \otimes_A (A \otimes N \otimes A)$$
given by
$$\phi_{M,N}(a \otimes m \otimes n \otimes a') = (a \otimes m \otimes 1) \otimes (1 \otimes n \otimes a').$$
Consequently, $L$ turns $k$-coalgebras into coalgebras in $\Bimod(A)$.
Similar considerations apply to graded coalgebras and dg-coalgebras.

\medskip

Let $(C, \Delta, \epsilon)$ be a coalgebra in
$\Mod(k)$. Then  the corresponding coalgebra
$(L(C), \Delta, \epsilon)$ in $\Bimod(A)$ is given by
\begin{align*}
\Delta(a \otimes c \otimes a')& = \sum_c (a \otimes c_{(1)} \otimes 1) \otimes_A (1 \otimes c_{(2)} \otimes a'),\\
\epsilon(a \otimes c \otimes a')& = a\epsilon(c)a'.
\end{align*}
where $a\otimes c\otimes a'\in L(C)=A\otimes C\otimes A$ and $\Delta(c)=\sum_c c_{(1)}\otimes c_{(2)}$.
\subsection{The coalgebra ${{\tilde{B}A}}$}\label{parcoalg2}
We first consider the graded bar construction $\Bar{A} \in G(\Mod(k))$
of $A$, which is the cofree coalgebra cogenerated by $\Sigma A$.
More precisely, we
put
$\Bar_nA= (\Sigma A)^{\otimes n}$ and
$\Bar A=\bigoplus_{n\ge 0} \Bar_{n}A$. The counit
$\epsilon$ is given by
$\epsilon^0 = \id_k: \Bar_0A = (\Sigma A)^{\otimes 0} = k \lra
k$,
and the comultiplication
$\Delta: \Bar A  \lra \Bar A \otimes \Bar A$ which
``separates tensors'' is defined by the components
\begin{multline*}
\Delta_{p, q}: (\Sigma A)^{\otimes n} \lra (\Sigma A)^{\otimes
  p} \otimes (\Sigma A)^{\otimes q}: sa_1 \otimes \cdots\otimes sa_n
\longmapsto (sa_1 \otimes \cdots \otimes sa_p) \\\otimes (sa_{p+1}
\otimes \cdots\otimes sa_n)
\end{multline*}
for $n, p, q \in \N$ and $p + q = n$, $p,q\ge 0$, by putting $\Delta = \sum_{p,q} \Delta_{p,q}$.

Let ${{\tilde{B}A}} = L(\Bar A) \in G(\Bimod(A))$ be the graded coalgebra obtained from the oplax monoidal functor $L$ from \S \ref{parcoalg}. 
Concretely, ${{\tilde{B}A}}$ is endowed with the counit
$${\epsilon}_0 = m: {{\tilde{B}A}}_0 = A \otimes A \lra A$$
and the comultiplication $\Delta= \sum_{p,q}\Delta_{p,q}$ given by the components 
${\Delta}_{p,q}: A \otimes (\Sigma A)^{\otimes n} \otimes A \lra A \otimes (\Sigma A)^{\otimes p} \otimes A \otimes (\Sigma A)^{\otimes q} \otimes A$ with
\[
{\Delta}_{p,q}(a_0\otimes sa_1 \otimes \cdots \otimes sa_n\otimes a_{n+1}) = a_0 \otimes sa_1 \otimes \cdots \otimes sa_{p+1} \otimes 1 \otimes sa_{p+2} \otimes \cdots \otimes sa_{n} \otimes a_{n+1}.
\]
Further, ${{\tilde{B}A}}$ is endowed with the \emph{pre-Hochschild differential}
$d_{{{\tilde{B}A}}}: A \otimes (\Sigma A)^{\otimes n} \otimes A \lra A \otimes (\Sigma A)^{\otimes n-1} \otimes A$
\[
d_{{{\tilde{B}A}}}=b'\otimes\Id^{\otimes n}+\Id^{\otimes n} \otimes b''
 + \sum_{p+q+2=n}\Id^{\otimes p+1}\otimes b\otimes  \Id^{\otimes q+1}
\]
where $b=b_2$ with $b_2$  as in Example \ref{ex:assoc}, $b'=m\circ (\Id\otimes \eta^{-1})$, $b''=-m\circ(\eta^{-1}\otimes \Id)$.
The morphism ${\Delta}: {{\tilde{B}A}} \lra {{\tilde{B}A}} \otimes_A {{\tilde{B}A}}$ is seen to be a
morphism of complexes for the differential $d_{{{\tilde{B}A}}}$, so ${{\tilde{B}A}}$ is a dg-coalgebra in $\Bimod(A)$. The counit ${\epsilon}: {{\tilde{B}A}} \lra A$ is a
morphism of dg-coalgebras for general reasons. The following results are well-known.
\begin{lemma} \label{lem:counitquasi}
The counit $\epsilon:{{\tilde{B}A}}\r A$  is a quasi-isomorphism.
\end{lemma}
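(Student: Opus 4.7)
The plan is to exhibit an explicit contracting homotopy for the augmented complex
\[
\cdots \lra {\tilde{B}A}_n \lra {\tilde{B}A}_{n-1} \lra \cdots \lra {\tilde{B}A}_0 = A\otimes A \xrightarrow{\epsilon} A \lra 0,
\]
thereby showing that $\epsilon:{\tilde{B}A}\to A$ is a quasi-isomorphism. Since acyclicity is a $k$-linear statement, it suffices to produce a contracting homotopy as a map of complexes of $k$-modules (or, equivalently, of right $A$-modules), without any bimodule structure considerations.

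Concretely, I would set $h_{-1}:A\to A\otimes A$, $a\longmapsto 1\otimes a$, and for $n\ge 0$ define
\[
h_n: A\otimes (\Sigma A)^{\otimes n}\otimes A \lra A\otimes (\Sigma A)^{\otimes n+1}\otimes A
\]
by $h_n(a_0\otimes sa_1\otimes\cdots\otimes sa_n\otimes a_{n+1}) = 1\otimes sa_0\otimes sa_1\otimes\cdots\otimes sa_n\otimes a_{n+1}$, where on the right-hand side $a_0$ is viewed as an element of $\Sigma A$ via the suspension. The key step is the verification $d_{{\tilde{B}A}}\circ h + h\circ d_{{\tilde{B}A}} = \id$ in positive degrees, and $\epsilon\circ h_{-1} = \id_A$, $h_{-1}\circ \epsilon + d_{{\tilde{B}A}}\circ h_0 = \id_{{\tilde{B}A}_0}$ in degree $0$. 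This is a direct and well-known calculation: the outer boundary term $b'\otimes \id^{\otimes n}$, once evaluated on $h_n(\cdots)$, produces the first tensor factor $a_0$ back via $b'(1\otimes sa_0)=a_0$, yielding the identity; the remaining terms of $d_{{\tilde{B}A}}\circ h_n$ match, with opposite sign, the terms of $h_{n-1}\circ d_{{\tilde{B}A}}$.

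The contraction shows that the augmented complex is null-homotopic as a complex of $k$-modules, hence acyclic, which is precisely the statement that $\epsilon$ induces an isomorphism in cohomology. The only place where sign conventions require a bit of care is in comparing the normalisation used in the present paper (with $b'=m\circ(\id\otimes\eta^{-1})$, $b''=-m\circ(\eta^{-1}\otimes\id)$, and $b=\Xi m_2$ as in Example \ref{ex:assoc}) with the classical bar differential; this is the only obstacle and is entirely routine, amounting to inserting the correct $\eta^{\pm1}$ factors so that $h$ is of the correct degree and the identity $dh+hd=\id$ holds on the nose. No auxiliary results beyond the definition of $d_{{\tilde{B}A}}$ recalled in \S\ref{parcoalg2} are needed.
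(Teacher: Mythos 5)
Your proof is correct: the extra-degeneracy contracting homotopy $h_n(a_0\otimes sa_1\otimes\cdots\otimes sa_n\otimes a_{n+1})=1\otimes sa_0\otimes\cdots\otimes sa_n\otimes a_{n+1}$ (a map of right $A$-modules only, not of bimodules) is the standard argument, and it is exactly where the unitality of $A$ --- which the paper explicitly flags as necessary --- enters. The paper omits the proof as well-known, so your contraction supplies precisely the intended justification, with only the routine sign bookkeeping for the $\eta^{\pm 1}$ factors left to check.
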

Note that this lemma depends on our hypothesis that $A$ is unital.
\begin{lemma} \label{lem:dualbar}
We have an isomorphism of complexes
\[
\Hom_{A-A}({{\tilde{B}A}},\Sigma A)
\cong \Sigma \CC_{\Hoch}(A)
\]
where ${{\tilde{B}A}}$ is equipped with the pre-Hochschild differential $d_{{{\tilde{B}A}}}$ and $\Sigma \CC_{\Hom}(A)$ is equipped with the differential $d_{\Hoch}=[b,-]$ where $b=\Xi(m)=\eta\circ m\circ (\eta^{-1}\otimes \eta^{-1})$.
\end{lemma}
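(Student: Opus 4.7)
The plan is to exhibit the isomorphism componentwise via the $L\dashv\mathrm{forget}$ adjunction recalled in \S\ref{parcoalg}, and then match differentials on the nose.

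First I would note that for each $n\ge 0$ the adjunction gives a natural isomorphism
\[
\sigma_n: \Hom_{A-A}({{\tilde{B}A}}_n,\Sigma A)=\Hom_{A-A}(L((\Sigma A)^{\otimes n}),\Sigma A)\xrightarrow{\cong}\Hom_k((\Sigma A)^{\otimes n},\Sigma A),
\]
sending a bimodule map $\phi$ to its restriction $\tilde\phi=\phi(1\otimes-\otimes 1)$, with inverse the bimodule extension $\tilde\phi\longmapsto\bigl(a_0\otimes sa_1\otimes\cdots\otimes sa_n\otimes a_{n+1}\mapsto a_0\,\tilde\phi(sa_1\otimes\cdots\otimes sa_n)\,a_{n+1}\bigr)$. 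Taking the product over $n$ yields a graded isomorphism $\sigma:\Hom_{A-A}({{\tilde{B}A}},\Sigma A)\xrightarrow{\cong}\prod_{n\ge 0}\Hom_k((\Sigma A)^{\otimes n},\Sigma A)=\Sigma\CC_{\Hoch}(A)$.

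Next I would compute the induced differential on the left. Since $\Sigma A$ has zero internal differential, the differential $Q^1=[d,-]$ on $\Hom_{A-A}({{\tilde{B}A}},\Sigma A)$ reduces (up to a sign depending on $|\phi|$) to precomposition with $d_{{{\tilde{B}A}}}$. Unpacking the three summands of $d_{{{\tilde{B}A}}}$ and transporting via $\sigma$, the outer term $b'\otimes\Id^{\otimes n}$ contributes $\pm a_1\cdot\tilde\phi(sa_2\otimes\cdots\otimes sa_{n+1})$, the outer term $\Id^{\otimes n}\otimes b''$ contributes $\pm\tilde\phi(sa_1\otimes\cdots\otimes sa_n)\cdot a_{n+1}$, and the middle terms $\Id^{\otimes p+1}\otimes b\otimes\Id^{\otimes q+1}$ contribute $\sum_{i}\pm\tilde\phi(sa_1\otimes\cdots\otimes b(sa_i\otimes sa_{i+1})\otimes\cdots\otimes sa_{n+1})$.

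Finally, I would recognise this total as $[b,\tilde\phi]$ in $\Sigma\CC_{\Hoch}(A)=\CC(\Xi\End_A)$. By the description of the LR/brace operations in \S\ref{sec:binftyproperads}, the dot product $b\bullet\tilde\phi=m^{1,1}(b,\tilde\phi)$ is obtained by plugging $\tilde\phi$ into one of the two inputs of $b$; after identifying the resulting left/right $A$-multiplications (using that the unit $1\in A$ makes $b'$ and $b''$ into left/right multiplications respectively) this produces exactly the two outer contributions, while $\tilde\phi\bullet b$ plugs $b$ into each of the $n$ inputs of $\tilde\phi$ and produces the middle contributions. Hence $Q^1\circ\sigma^{-1}=\sigma^{-1}\circ[b,-]$ and $\sigma$ is the required isomorphism of complexes.

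The main obstacle is the sign bookkeeping: one has to check that the Koszul signs arising from the $\Xi$-convention of \S\ref{sec:shifted} (in particular from \eqref{eq:shiftedprop1} applied to $b=\Xi m$) match those produced by precomposition with the shifted pre-Hochschild differential and by the convention $Q^1=[d,-]$ for the internal $\Hom$ in $C(\Bimod(A))$. This is a routine but delicate calculation, essentially a coloured/graded variant of Example \ref{ex:assoc}, and it is what forces the particular signs $b=\Xi m$, $b'=m\circ(\Id\otimes\eta^{-1})$, $b''=-m\circ(\eta^{-1}\otimes\Id)$ fixed in \S\ref{parcoalg2}.
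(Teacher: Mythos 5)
Your proposal is correct and follows essentially the same route as the paper: identify $\Hom_{A-A}({{\tilde{B}A}}_n,\Sigma A)\cong\Hom_k((\Sigma A)^{\otimes n},\Sigma A)$ via the free--forget adjunction $\phi\mapsto\tilde\phi=\phi(1\otimes-\otimes 1)$, transport the differential (which, $\Sigma A$ having zero internal differential, is signed precomposition with $d_{{{\tilde{B}A}}}$), and match the two outer terms $b',b''$ with $b\bullet\tilde\phi$ and the inner terms with $\tilde\phi\bullet b$, exactly as in the paper's computation $d_{\Hoch}(\tilde g)=b\bullet\tilde g-(-1)^{n-1}\tilde g\bullet b=d(g)$. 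The only difference is that the paper records the explicit sign $(-1)^{n-1}$ (from $|g|=n-1$) where you defer the sign bookkeeping as routine.
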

\begin{proof}
We have
\begin{align*}
\Hom_{A-A}({{\tilde{B}A}},\Sigma A)
&=\prod_{m\ge 0} \Hom_{A-A}(A\otimes (\Sigma A)^{\otimes m} \otimes A,\Sigma A)\\
&=\prod_{m\ge 0} \Hom_{k}((\Sigma A)^{\otimes m} ,\Sigma A)\\
&=\Sigma \CC_{\Hoch}(A)
\end{align*}
The differential $\Hom_{A-A}(A\otimes (\Sigma A)^{\otimes n}\otimes A,\Sigma A)\r \Hom_{A-A}(A\otimes (\Sigma A)^{\otimes n+1}\otimes A,\Sigma A)$
 is given by
\[
d(f)=(-1)^{n-1} f\circ \left(b'\otimes\Id^{\otimes n+1}+\Id^{\otimes n+1} \otimes b'' + \sum_{p+q+2=n+1}\Id^{\otimes p+1}\otimes b\otimes  \Id^{\otimes q+1}\right)
\]
since the degree of  $f$ is $n-1$.
For $g\in \Hom_k(A\otimes (\Sigma A)^{\otimes n}\otimes A,\Sigma A)$ let $\tilde{g}(-)=g(1\otimes -\otimes 1)$ be the corresponding element of  $\Hom_{A-A}((\Sigma A)^{\otimes n},\Sigma A)$. It is then easy to see that
\[
\tilde{g}\bullet b=(g\circ (\sum_{p+q+2=n+1}\Id^{\otimes p+1}\otimes b\otimes  \Id^{\otimes q+1}))\,\tilde{}
\]
Moreover one also checks that
\[
b\bullet \tilde{g}=(-1)^{n-1} (g\circ (b'\otimes\Id^{\otimes n+1}+\Id^{\otimes n+1} \otimes b''
))\,\tilde{}
\]
It now follows
\[
d_{\Hoch}(\tilde{g})=b\bullet \tilde{g}-(-1)^{n-1} \tilde{g}\bullet b=d(g)
\]
which proves what we want.
\end{proof}

\section{Application of the semi-co-Hochschild complex}\label{sec:applic}
\subsection{Main result}
Throughout in this section $A = (A, m, 1)$ will be a unital $k$-algebra. We will use a particular semi-co-Hochschild complex to prove the following result.
\begin{theorem}[See \S\ref{sec:BA} below] \label{th:mainth2} Assume that $C$ is a counital dg-coalgebra in $C(\Bimod(A))$ such that $\epsilon:C\r A$ is a quasi-isomorphism.  Assume furthermore that~$A$ is $k$-projective and that $C$ is homotopy projective as complex of bimodules.  There is an isomorphism between $\CC_{\Hoch}(A)$ and $\CC_{\coHoch}(C)$ in the homotopy category of $B_\infty$-algebras.
\end{theorem}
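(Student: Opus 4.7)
The plan is to realise Theorem~\ref{th:mainth2} as an application of the semi-co-Hochschild formalism of \S\ref{sec:semico} to the triple $(A,M,C)$ with $M=C$ and $N=0$. First I would choose a bicategory $\B$ whose $1$-morphism categories realise the monoidal category of complexes of $k$-modules (for $A \in \B(\alpha,\alpha)$), the monoidal category $C(\Bimod(A))$ with unit $A$ (for $C \in \B(\gamma,\gamma)$), and a suitable category of complexes of right $A$-modules (for $M \in \B(\alpha,\gamma)$). The algebra structure $m$ on $A$, the coalgebra structure $\Delta$ on $C$, the left $A$-action $\mu_M$ on $C$ coming from the bimodule structure, and the right coaction $\delta_M:=\Delta$ then satisfy the Maurer--Cartan equation of Definition~\ref{def:semico}: the compatibility $\mathrm{Comp}(\mu,\delta)$ of \S\ref{parstructure} holds precisely because $\Delta$ is a left $A$-module morphism, which is part of the data of $C$ being a coalgebra in $\Bimod(A)$. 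This equips $(A,C,C)$ with a semi-coalgebra structure $\xi$ and, by Proposition~\ref{prop:twist2}, with the twisted $\Xi B_\infty$-algebra $\Sigma\CC_{\scHoch,\xi}(A,C,C)$.

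By Proposition~\ref{prop:piApiC}, the canonical projections $\pi_A\colon \Sigma\CC_{\scHoch,\xi}(A,C,C)\to \Sigma\CC_{\Hoch}(A)$ and $\pi_C\colon \Sigma\CC_{\scHoch,\xi}(A,C,C)\to \Sigma\CC_{\coHoch}(C)$ are $\Xi B_\infty$-morphisms. It therefore suffices to show that both are quasi-isomorphisms, after which inverting $\pi_A$ in $\Ho(B_\infty)$ and composing with $\pi_C$ yields the desired isomorphism. By Lemma~\ref{lem:piApiC} this reduces to proving that $f_{M,C}$ and $f_{A,M}$ in the bicartesian square~\eqref{lem:bicartesian} are quasi-isomorphisms, and by Lemma~\ref{lem:coarse} it is enough to verify the left and right Keller conditions \eqref{eq:leftKeller}--\eqref{eq:rightKeller} for $(A,C,C)$.

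The main obstacle will be the verification of these Keller conditions. For the left condition, with $n$ fixed, I expect the $m$-direction complex $(\Sigma\CC^{\bullet,n}(C),Q^1+\partial^{\bullet,n}_{A,M})$ to be identified, via the hom-tensor adjunction $\Hom(A^{\otimes m}\otimes -,X)\cong \Hom(A^{\otimes m},\Hom_A(-,X))$, with the classical Hochschild cochain complex of $A$ with coefficients in the $A$-bimodule $\Hom_{A}(C,C\otimes_A C^{\otimes_A n})$, whose cohomology is $\Ext^\ast_{A-A}(A,\Hom_A(C,C\otimes_A C^{\otimes_A n}))$. Exploiting $k$-projectivity of $A$, homotopy projectivity of $C$ as a complex of bimodules, and the quasi-isomorphism $\epsilon\colon C\to A$, standard derived manipulations should contract this to $\Hom_{A-A}(C,C^{\otimes_A n})\cong\Sigma\CC^n_{\coHoch}(C)$, which is precisely the image of $f^n_{M,C}$ sitting in the $m=0$ column. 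A dual argument, resolving the cobar-type differential in the $n$-direction using $\epsilon$, will establish the right Keller condition. The delicate point is the careful bookkeeping of adjunctions, suspensions and signs so that the derived reductions are compatible with the twisted differentials on both sides of the bicartesian square.
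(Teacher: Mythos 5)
Your proposal follows the paper's own proof essentially step for step: the paper likewise sets $M=C$ with its canonical $A$-$C$-bimodule structure, forms the semi-coalgebra structure on $(A,C,C)$ in the bicategory with $\B(\alpha,\alpha)=C(k)$ and $\B(\gamma,\gamma)=C(\Bimod(A))$, and reduces via Proposition \ref{prop:piApiC}, Lemma \ref{lem:piApiC} and Lemma \ref{lem:coarse} to showing that $f_{M,C}$ and $f_{A,M}$ are quasi-isomorphisms, inverting $\pi_A$ in $\Ho(B_\infty)$ at the end. The only divergence is in the endgame you yourself flag as the main obstacle: where you sketch a derived-category contraction via Hochschild cohomology with coefficients in $\Hom_A(C,C\otimes_A C^{\otimes_A n})$, the paper instead isolates the hypotheses as conditions (i)--(iv) of Theorem \ref{th:conditions}, factors $f^n_{M,C}$ explicitly as the four-step composition \eqref{firstcomp} (using Lemma \ref{lemdeltaqis} for the coaction), disposes of $f_{A,M}$ by replacing $C^{\otimes_A n}$ with $A^{\otimes_A n}$ and observing the alternating zero/isomorphism pattern of Proposition \ref{propalt}, and feeds in the single geometric input that $\cone(C\to A)$ is contractible on either side (Lemma \ref{lemlem}) --- all consistent with, and essentially what is needed to complete, your sketch.
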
\label{cor:BA}
Putting $C={{\tilde{B}A}}$ as is \S\ref{parcoalg2} yields immediately:
\begin{corollary}
Assume that $A$ is $k$-projective. There is an isomorphism between $\CC_{\Hoch}(A)$ and $\CC_{\coHoch}({{\tilde{B}A}})$ in the homotopy category of $B_\infty$-algebras, where ${{\tilde{B}A}}$
is considered as a coalgebra in $C(\Bimod(A))$ as in \S\ref{parcoalg2}.
\end{corollary}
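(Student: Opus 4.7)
The plan is to deduce this corollary directly from Theorem \ref{th:mainth2} applied to $C := \tilde{B}A$, so the task reduces to verifying the four hypotheses of that theorem. Three of them are handled by material already established in \S\ref{sec:varbar}: the bar bimodule $\tilde{B}A$ carries a counital dg-coalgebra structure in $C(\Bimod(A))$ by the construction of \S\ref{parcoalg2}; the counit $\epsilon: \tilde{B}A \to A$ is a quasi-isomorphism by Lemma \ref{lem:counitquasi}; and $A$ is $k$-projective by assumption. The only hypothesis requiring a short verification is the homotopy projectivity of $\tilde{B}A$ as a complex of $A$-bimodules: since $A$ is $k$-projective, each tensor power $(\Sigma A)^{\otimes n}$ is $k$-projective, hence each component $A \otimes_k (\Sigma A)^{\otimes n} \otimes_k A$ is projective as an $A$-bimodule, and the totalisation $\tilde{B}A$ is a bounded above complex of projective $A$-bimodules, therefore homotopy projective.

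The content thus lies entirely in Theorem \ref{th:mainth2}, and the approach there, following the outline in the introduction, is to apply the semi-co-Hochschild machinery to $M := C$ and $N := 0$. First one equips $(A, C, C)$ with the semi-coalgebra structure (Definition \ref{def:semico}) whose four structure maps are the multiplication $m$ of $A$, the comultiplication $\Delta$ of $C$, the left action $\mu$ of $A$ on $C$ coming from its bimodule structure, and $\delta := \Delta$ recycled as the $C$-coaction on $M = C$. The five defining identities of \S\ref{parstructure} are then immediate; the only one with any content, namely Comp($\mu, \delta$), is precisely the statement that $\Delta$ is an $A$-bilinear map. By Proposition \ref{prop:piApiC}, the resulting twisted $\Xi B_\infty$-algebra $\Sigma \CC_{\scHoch}(A, C, C)$ admits canonical $\Xi B_\infty$-morphisms $\pi_A$ and $\pi_C$ onto $\Sigma \CC_{\Hoch}(A)$ and $\Sigma \CC_{\coHoch}(C)$ respectively, producing the desired zigzag in $\mathrm{Ho}(B_\infty)$ as soon as both are shown to be quasi-isomorphisms.

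By Lemma \ref{lem:piApiC}, showing $\pi_A$ and $\pi_C$ are quasi-isomorphisms reduces to showing $f_{A,M}$ and $f_{M,C}$ are quasi-isomorphisms, and by Lemma \ref{lem:coarse} it suffices in turn to verify the corresponding right and left Keller conditions on $(A, C, C)$. This is where the real work lies and will be the main obstacle. For the right Keller condition, one fixes $m$ and identifies $(\Sigma \CC^{m, \bullet}(C), Q^1 + \partial^{m, \bullet}_{M,C})$ with the $\Hom_{A\text{-}A}$-complex into a cobar-type totalisation built from $(C, \Delta)$, then uses the hypotheses that $\epsilon: C \to A$ is a quasi-isomorphism and that $C$ is homotopy projective to collapse this onto $\Hom_{A\text{-}A}((\Sigma A)^{\otimes m} \otimes C, A) \cong \Hom_k((\Sigma A)^{\otimes m}, A) = \Sigma \CC^m_{\Hoch}(A)$. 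The $k$-projectivity of $A$ ensures the relevant $k$-tensor products remain exact throughout. A parallel spectral-sequence argument, in the spirit of the proof of Proposition \ref{prop:mainprop}, should handle the left Keller condition. The careful bookkeeping of the $A$-bimodule structures and the recognition of these decoupled complexes as standard bar/cobar resolutions of $A$ together constitute the core technical content.
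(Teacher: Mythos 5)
Your proposal is correct and follows the paper's own route: the corollary is obtained by specialising Theorem \ref{th:mainth2} to $C = \tilde{B}A$, with counitality and the quasi-isomorphism property of $\epsilon$ supplied by \S\ref{parcoalg2} and Lemma \ref{lem:counitquasi}, and your verification that $k$-projectivity of $A$ makes $\tilde{B}A$ a right-bounded complex of projective bimodules (hence homotopy projective) is precisely the point the paper leaves implicit and uses later. Your further sketch of the proof of Theorem \ref{th:mainth2} via the semi-coalgebra structure on $(A,C,C)$ and the Keller conditions likewise matches the paper's strategy in \S\ref{sec:BA}, though that part is not needed for the corollary itself.
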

\subsection{A special semi-co-Hochschild complex}
Let  $R$, $S$ be $k$-algebras.
To $R$, $S$ we may naturally 
associate a bicategory $\B$ enriched in $C(k)$ with two objects $\alpha,\gamma$.
We put
\[
\begin{array}{ll}
\B(\alpha, \alpha) = C(\Bimod(R)) &   \B(\alpha, \gamma) = C(\Bimod(R,S))\\
 \B(\gamma, \alpha) = C(\Bimod(S,R)) &   \B(\gamma, \alpha) = C(\Bimod(S))
\end{array}
\]
 Thus the  $1$-cells are bimodules
  and the $2$-cells are morphisms of bimodules. The composition on the
  level of $1$-cells is given by the tensor product of
  bimodules. 

\medskip

We now specialise to the case $R=k$, $S=A$. Thus
\[
\begin{array}{cl}
\B(\alpha,\alpha)=C(k) & \B(\alpha,\gamma)=C(A)\\
\B(\gamma,\alpha)=C(A) & \B(\gamma,\gamma)=C(\Bimod(A))
\end{array}
\]
We consider $A$ as an  object of $C(k)=\B(\alpha,\alpha)$ (concentrated in degree zero, and with zero differential) and in addition we assume we are given $M\in C(A)=\B(\alpha,\gamma)$, $C\in C(\Bimod(A,A))=\B(\gamma,\gamma)$
as well as a semi-coalgebra structure $\xi$ on $(A,M,C)$ as in Definition \ref{def:semico} (with $N=0$). The semi-coalgebra structure will always be the same so we omit it from the notations.
E.g. we write $\CC_{\scHoch}(A,M,C)$ instead of $\CC_{\scHoch,\xi}(A,M,C)$.

We give a suitable representation of $\CC_{\scHoch}(A, M, C)$ (see \eqref{def:schoch}) based upon Hom complexes in the category $\Bimod(A)$.
More precisely we have.
\begin{lemma} \label{lem:rewrite}
The decomposition of $\Sigma \CC_{\scHoch}(A,M,C)$ in \eqref{eq:decomposition} may be written as
\begin{multline*}
\Sigma \CC_{\scHoch}(A,M,C)=\Hom_{A-A} ({{\tilde{B}A}},\Sigma A)\oplus\\
\prod_{n\ge 0}\Hom_{A-A}({{\tilde{B}A}}\otimes_A M, M\otimes_A (\Sigma^{-1} C)^{\otimes_A
  n})
\oplus \prod_{n\ge 0} \Hom_{A-A}(\Sigma^{-1} C,(\Sigma^{-1} C)^{\otimes_A n})
\end{multline*}
Under this identification we have the following correspondences
\begin{align*}
\partial_A&\leftrightarrow d_{{{\tilde{B}A}}}^\vee\\
\partial_{A,M}&\leftrightarrow (d_{{{\tilde{B}A}}}\otimes \Id_M)^{\vee}\\
f_{A,M}&\leftrightarrow f\mapsto -\mu\circ (\eta^{-1}\otimes \Id_M)\circ (f\otimes \Id_M) \in \Hom_{A-A}({{\tilde{B}A}}\otimes_A M, M)
\end{align*}
\end{lemma}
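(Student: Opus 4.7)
The plan is to treat each of the three summands in the decomposition \eqref{eq:decomposition} of $\Sigma\CC_{\scHoch}(A,M,C)$ separately, and then to check the three claimed correspondences of differentials and of $f_{A,M}$ by unwinding definitions, relying on the signs and formulae already set up in Example \ref{ex:assoc} and Lemma \ref{lem:dualbar}.

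First, the $\Hoch(A)$-part is handled by a direct application of Lemma \ref{lem:dualbar}, which gives both the identification $\Sigma\CC_{\Hoch}(A)\cong \Hom_{A\text{-}A}({{\tilde{B}A}},\Sigma A)$ and the correspondence $\partial_A\leftrightarrow d_{{{\tilde{B}A}}}^\vee$ (here $\partial_A = [b,-]$ with $b=\Xi(m)$ as in \S\ref{sec:anatomy}). The $\coHoch(C)$-part needs essentially no work: since $\B(\gamma,\gamma)=C(\Bimod(A))$ and the monoidal structure there is $\otimes_A$, the $\Hom$ in the definition of $\Sigma\CC^n_{\coHoch}(C)$ is by construction $\Hom_{A\text{-}A}(\Sigma^{-1}C,(\Sigma^{-1}C)^{\otimes_A n})$, which is exactly the third summand in the claim.

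For the $\scHoch(M)$-part, I would identify $\Sigma\CC^{m,n}_{\scHoch}(M)$ using the bicategory structure: in $\B(\alpha,\gamma)=C(\Bimod(k,A))$ the object $(\Sigma A)^{\otimes m}\otimes M\otimes (\Sigma^{-1}C)^{\otimes n}$ means $(\Sigma A)^{\otimes m}\otimes_k M\otimes_A (\Sigma^{-1}C)^{\otimes_A n}$ (left tensor over $k$ since $A\in\B(\alpha,\alpha)=C(k)$, right tensor over $A$ since $C\in\B(\gamma,\gamma)$). Then $\Hom$ in $\B(\alpha,\gamma)$ is $\Hom_{k\text{-}A}$, which for $(\Sigma A)^{\otimes m}\otimes_k M$ equals $\Hom_{-A}$ because the left $k$-action is automatic. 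The free/forgetful adjunction $L\dashv(-)$ from \S\ref{parcoalg} applied in the form
\[
\Hom_{-A}\bigl((\Sigma A)^{\otimes m}\otimes_k M,-\bigr)\cong \Hom_{A\text{-}A}\bigl(A\otimes(\Sigma A)^{\otimes m}\otimes_k A\otimes_A M,-\bigr)
\]
together with ${{\tilde{B}A}}_m=A\otimes(\Sigma A)^{\otimes m}\otimes A$ and ${{\tilde{B}A}}\otimes_A M=\bigoplus_m A\otimes(\Sigma A)^{\otimes m}\otimes M$ gives the desired identification after taking the product over $m$.

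Finally, to verify the correspondences $\partial_{A,M}\leftrightarrow(d_{{{\tilde{B}A}}}\otimes\Id_M)^\vee$ and the explicit formula for $f_{A,M}$, I would substitute the expressions for $\xi_{\underline{A},\underline{A};\underline{A}}$, $\xi_{\underline{A},\underline{M};\underline{M}}$ from \eqref{eq:operations} into the formulae \eqref{eq:components} and transport along the adjunction $g\mapsto\tilde g(-)=g(1\otimes-\otimes 1)$ exactly as in the proof of Lemma \ref{lem:dualbar}. The term $[\xi_{\underline{A},\underline{M};\underline{M}},-]$ contributes the $b'$ and $b''$ end-terms together with $b$ in the ``last slot'' of $d_{{{\tilde{B}A}}}\otimes\Id_M$ (via the module action on $M$), while the term $-\bullet\xi_{\underline{A},\underline{A};\underline{A}}$ contributes the internal $b$-terms, matching the shape of $d_{{{\tilde{B}A}}}$ as recalled in \S\ref{parcoalg2}. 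The formula for $f_{A,M}$ is obtained similarly: $f_{A,M}(f)=\xi_{\underline{A},\underline{M};\underline{M}}\bullet f$ with $\xi_{\underline{A},\underline{M};\underline{M}}=\Xi^w(\mu)$ rewrites, after passing through the suspension conventions of \S\ref{sec:shifted}, as $-\mu\circ(\eta^{-1}\otimes\Id_M)\circ(f\otimes\Id_M)$. The main obstacle will be keeping track of all the Koszul signs in this last step, since $\Xi^w=\Xi_{\underline{A}}\Xi_{\underline{C}}^{-1}$ involves two separate shifts and a permutation is hidden in the dot product $\bullet=m^{1,1}$; I expect the bookkeeping to mirror precisely the calculation already done at the end of Lemma \ref{lem:dualbar}, and the minus sign in the formula for $f_{A,M}$ to arise as in that calculation.
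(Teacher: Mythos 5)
Your proposal is correct and follows essentially the same route as the paper, whose proof simply invokes Lemma \ref{lem:dualbar} for $\partial_A$, notes that $\partial_{A,M}$ and $f_{A,M}$ are handled ``in a similar way'', and records $\xi_{\underline{A},\underline{M};\underline{M}}=-\mu\circ(\eta^{-1}\otimes\Id_M)$ from \eqref{eq:operations}; you merely make explicit the free/forgetful adjunction identifying the underlying graded objects and the term-by-term matching of \eqref{eq:components} with $d_{{{\tilde{B}A}}}\otimes\Id_M$, which the paper leaves implicit.
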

\begin{proof} The claim for $\partial_A$ is Lemma \ref{lem:dualbar}. The claims for $\partial_{A,M}$ and $f_{A,M}$ are proved
in a similar way. For $f_{A,M}$ we use in addition that $\xi_{A,M;M}=-\mu\circ(\eta^{-1}\otimes \Id_M)$ by \eqref{eq:operations}.
\end{proof}

\subsection{Some conditions}
\label{sec:conditions}
\begin{definition}\label{def:Kconditions}
Consider a complex $M \in C(\Bimod(A))$.
\begin{enumerate}
\item $M$ satisfies the \emph{left Keller condition} with respect to $X, Y \in C(\Bimod(A))$ if the morphism $$\id_M \otimes_A -: \Hom_{A-A}(X,Y) \lra \Hom_{A-A}(M \otimes_A X, M \otimes_A Y)$$
is a quasi-isomorphism.
\item $M$ satisfies the \emph{right Keller condition} with respect to $X, Y \in C(\Bimod(A))$ if the morphism $$- \otimes_A \id_M: \Hom_{A-A}(X,Y) \lra \Hom_{A-A}(X \otimes_A M, Y \otimes_A M)$$
is a quasi-isomorphism.
\end{enumerate}
\end{definition}
Below we will consider the following general conditions.
\begin{itemize}
\item[(i)] $A$ is $k$-projective and $M$ and $C$ are homotopy projective in $C(\Bimod(A))$;
\item[(ii)] $C$ is counital and $\epsilon_C: C \lra A$ is a quasi-isomorphism;
\end{itemize}
and two conditions 
\begin{itemize}
\item[(iii)] $M$ satisfies the left Keller condition with respect to $C$ and $C^{\otimes_A n}$ for $n\ge 0$.
\item[(iv)] $M$ satisfies the right Keller condition with respect to $A^{\otimes m}$ and $A$ for $m\ge 2$.
\end{itemize}
which will turn out to be related to the conditions (1)(2) in Lemma \ref{lem:coarse}. Under these conditions we will
prove the following result.
\begin{theorem} \label{th:conditions}
Assume (i)(ii)(iii)(iv) hold. Then $\pi_A$, $\pi_C$ are quasi-isomorphisms and in particular
using Proposition \ref{prop:piApiC} and Lemma \ref{lem:piApiC},
$\CC_{\Hoch}(A)$ and $\CC_{\coHoch}(C)$ are quasi-isomorphic in the homotopy category of $B_\infty$-algebras.
\end{theorem}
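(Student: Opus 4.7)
The plan is to reduce Theorem \ref{th:conditions} to the hypotheses (i)--(iv) by a cascade of formal reductions, ending with a direct application of (iii) and (iv). By Proposition \ref{prop:piApiC}, $\pi_A$ and $\pi_C$ are already morphisms of $\Xi B_\infty$-algebras, so the second clause of the theorem will follow automatically once the first clause is established. For the first clause, Lemma \ref{lem:piApiC} tells us that showing $\pi_A$ (resp.\ $\pi_C$) is a quasi-isomorphism amounts to showing the glueing map $f_{M,C}$ (resp.\ $f_{A,M}$) of the bicartesian square \eqref{lem:bicartesian} is a quasi-isomorphism; and by Lemma \ref{lem:coarse} these in turn reduce to the componentwise statements \eqref{eq:leftKeller} and \eqref{eq:rightKeller}.

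Next I would use Lemma \ref{lem:rewrite} to rewrite both componentwise maps entirely in terms of $\Hom_{A-A}$ complexes involving the bar construction ${{\tilde{B}A}}$, the object $M$, and the coalgebra $C$ with its coaction $\delta_M$. Condition (i) implies that ${{\tilde{B}A}}$ is a resolution of $A$ by homotopy projective bimodules, and together with $M$ being homotopy projective yields a quasi-isomorphism ${{\tilde{B}A}} \otimes_A M \to M$ between homotopy projective objects, which can be used to collapse the ${{\tilde{B}A}}$-direction of the componentwise complexes up to quasi-isomorphism. Dually, condition (ii) says that the counit $\epsilon_C : C \to A$ is a quasi-isomorphism, which combined with (i) allows the cobar-type direction built from $\Delta_C$ and $\delta_M$ to match up on the nose with the corresponding $\Hom$-complex involving $A$.

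After these substitutions the componentwise maps become, up to signs and shifts, the comparison morphisms $\id_M \otimes_A -$ and $-\otimes_A \id_M$ of Definition \ref{def:Kconditions}, applied respectively to the pairs $(C, C^{\otimes_A n})$ for $n \ge 0$ and $(A^{\otimes m}, A)$ for $m \ge 2$. These are quasi-isomorphisms by hypotheses (iii) and (iv) respectively, which finishes the reduction and hence the proof.

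The trickiest part of the plan is the identification step in the second paragraph: one must carefully track the signs, shifts, and compatibilities between $\partial_{A,M}$, $\partial_{M,C}$, $f_{A,M}$ and $f_{M,C}$ of Section \ref{sec:anatomy} (see \eqref{eq:components}) under the isomorphisms of Lemma \ref{lem:rewrite}, in order to verify that the homological substitutions really land on the Keller tensor-comparison morphisms of Definition \ref{def:Kconditions} rather than on some twisted variant thereof. The substitutions themselves (using the bar resolution ${{\tilde{B}A}} \otimes_A M \to M$ on the source and the counit $\epsilon_C: C \to A$ on the target) are standard consequences of (i) and (ii); it is the combinatorial bookkeeping that requires care.
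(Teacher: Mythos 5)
Your reduction chain is exactly the paper's: Proposition \ref{prop:piApiC} for the $B_\infty$-compatibility, Lemma \ref{lem:piApiC} to trade $\pi_A,\pi_C$ for $f_{M,C},f_{A,M}$, Lemma \ref{lem:coarse} to pass to the componentwise maps, and Lemma \ref{lem:rewrite} to express everything in $\Hom_{A-A}$-complexes built from $\tilde{B}A$, $M$ and $C$. The gap is in your final step: the claim that after the substitutions the componentwise maps ``become, up to signs and shifts,'' the Keller comparison morphisms of Definition \ref{def:Kconditions} is not accurate, and you have located the difficulty in the wrong place (sign bookkeeping rather than two genuine homological verifications).

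Concretely, on the $\pi_A$ side the map $f^n_{M,C}$ is not the morphism $\id_M\otimes_A-$ alone but a composite of it with three further maps of the form $-\circ g$, for $g=\delta$, $g=\mu$ and $g=\epsilon_{\tilde{B}A}\otimes_A\id_M$ (see \eqref{firstcomp}); condition (iii) disposes only of the first factor. The nontrivial input here is that the coaction $\delta\colon M\to M\otimes_A C$ is itself a quasi-isomorphism (Lemma \ref{lemdeltaqis}), which is not formal: it uses the counit identity $(\id_M\otimes\epsilon)\circ\delta=\id_M$ together with the fact that $k$-projectivity of $A$ splits $\epsilon_C$ as a map of left $A$-modules. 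On the $\pi_C$ side, condition (iv) only identifies the composite of $f^m_{A,M}$ with the projection of $\prod_{n\ge 0}\Hom_{A-A}((\tilde{B}A)_m\otimes_A M,M\otimes_A(\Sigma^{-1}C)^{\otimes_A n})$ onto its $n=0$ column; one must still prove that this projection is a quasi-isomorphism for the twisted differential $Q^1+\partial_{M,C}$. That is the actual content of Proposition \ref{propalt}: after replacing $C$ by $A$ via (i)(ii) one computes that the column-to-column differentials of \eqref{eq:projection} alternate between zero and isomorphisms (again driven by the counit axiom), whence the total complex retracts onto its first column. Neither of these two steps is a consequence of (iii) or (iv), and neither is mere sign-tracking; your plan as written would stall at both points.
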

The proof of the fact that $\pi_A$, $\pi_C$ are quasi-isomorphisms will be
  carried out in sections \ref{sec:piA},\ref{parpiC} below.
\subsection{The projection $\pi_A$}
\label{sec:piA}
The aim in this section is to show that under suitable conditions the projection $\pi_A$ is a quasi-isomorphism. By Lemmas \ref{lem:coarse} 
this is equivalent to the left Keller condition on $(A,M,C)$, i.e.\
taking into account Lemma \ref{lem:rewrite}, to the conclusion of the following proposition
\begin{proposition}
If the conditions (i), (ii), (iii) hold, then the morphism $f^n_{M,C}$ 
\[
\Hom_{A-A}(\Sigma^{-1}C,(\Sigma^{-1}C)^{\otimes_A n})\xrightarrow{f^n_{M,C}} \Hom_{A-A}({{\tilde{B}A}}\otimes_A M, M\otimes_A (\Sigma^{-1} C)^{\otimes_A  n})
\]
is a quasi-isomorphism. 
\end{proposition}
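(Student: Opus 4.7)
The plan is to exhibit $f^n_{M,C}$ as a three-step composition, each factor of which is a quasi-isomorphism for a different reason. Unwinding the definition from \S\ref{sec:anatomy} together with the identification in Lemma \ref{lem:rewrite}, the image $f^n_{M,C}(\phi)$ of $\phi \in \Hom_{A-A}(\Sigma^{-1}C, (\Sigma^{-1}C)^{\otimes_A n})$ is, up to a sign, the composite
\[
\tilde{B}A \otimes_A M \xrightarrow{\epsilon_{\tilde{B}A} \otimes_A \id_M} M \xrightarrow{\delta_M} M \otimes_A \Sigma^{-1}C \xrightarrow{\id_M \otimes_A \phi} M \otimes_A (\Sigma^{-1}C)^{\otimes_A n}.
\]
This suggests a factorization $f^n_{M,C} = \gamma \circ \beta \circ \alpha$, with
\[
\alpha = \id_M \otimes_A -, \qquad \beta = -\circ \delta_M, \qquad \gamma = -\circ (\epsilon_{\tilde{B}A} \otimes_A \id_M),
\]
passing through the intermediate Hom complexes $\Hom_{A-A}(M \otimes_A \Sigma^{-1}C,\, M \otimes_A (\Sigma^{-1}C)^{\otimes_A n})$ and $\Hom_{A-A}(M,\, M \otimes_A (\Sigma^{-1}C)^{\otimes_A n})$. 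First I would write down and justify this factorization rigorously, tracking the signs coming from the weight function $w$ in \eqref{eq:operations}.

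Next I would verify that each of $\alpha, \beta, \gamma$ is a quasi-isomorphism. The quasi-isomorphism of $\alpha$ is exactly the content of the left Keller condition (iii), applied to the pair $(\Sigma^{-1}C, (\Sigma^{-1}C)^{\otimes_A n})$. For $\gamma$, Lemma \ref{lem:counitquasi} yields that $\epsilon_{\tilde{B}A}: \tilde{B}A \to A$ is a quasi-isomorphism; since by (i) $A$ is $k$-projective and $M$ is homotopy projective (hence homotopy flat), tensoring produces a quasi-isomorphism $\tilde{B}A \otimes_A M \to A \otimes_A M \cong M$ between homotopy projective objects, so precomposition induces a quasi-isomorphism on Hom complexes.

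The central step is $\beta$, which reduces to showing that the coaction $\delta_M: M \to M \otimes_A \Sigma^{-1}C$ is itself a quasi-isomorphism. For this I would exploit the strict counit axiom $(\id_M \otimes_A \Sigma^{-1}\epsilon_C) \circ \delta_M = \pm \id_M$ coming from the semi-coalgebra structure: by (ii) the map $\epsilon_C: C \to A$ is a quasi-isomorphism, and combined with the homotopy flatness of $M$ afforded by (i), the splitting $\id_M \otimes_A \Sigma^{-1}\epsilon_C$ is a quasi-isomorphism, so two-out-of-three forces $\delta_M$ to be one as well. The passage from $\delta_M$ being a quasi-isomorphism to $\beta$ being one on Hom complexes then uses that both $M$ and $M \otimes_A \Sigma^{-1}C$ are homotopy projective, which follows from (i) and the standard fact that tensoring a cofibrant bimodule with a cofibrant one-sided module preserves cofibrancy.

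I expect the main obstacle to be the homotopical bookkeeping needed at steps $\beta$ and $\gamma$: namely, verifying cleanly that $M \otimes_A \Sigma^{-1}C$ and $\tilde{B}A \otimes_A M$ really are homotopy projective in the appropriate sense so that the ordinary Hom complexes compute derived Hom and therefore transport quasi-isomorphisms. A secondary but persistent nuisance will be the signs and suspensions introduced by the weight function $w$ when matching the properadic formula $f^n_{M,C} = \pm - \bullet \xi_{\underline{M};\underline{M},\underline{C}}$ with the explicit composition displayed above.
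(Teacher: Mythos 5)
Your proposal is correct and follows essentially the same route as the paper: the paper factors $f^n_{M,C}$ through the same intermediate Hom complexes (it merely separates your $\gamma$ into the isomorphism $-\circ\mu$ and the precomposition with $\epsilon_{\tilde{B}A}\otimes_A\id_M$), handles the first factor by the left Keller condition (iii), and proves that $\delta_M$ is a quasi-isomorphism exactly as you do, via the counit retraction $(\id_M\otimes_A\epsilon_C)\circ\delta_M=\id_M$ together with (i) and (ii) (this is the paper's Lemma \ref{lemdeltaqis}). The homotopy-projectivity bookkeeping you anticipate as the main obstacle is dispatched in the paper in one line, using (i) to ensure all sources and targets of the precomposed maps are homotopy projective bimodules.
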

\begin{proof}
Taking into account Lemma \ref{lem:rewrite},
up to sign $f^n_{M,C}$ is given by the following composition (since we do not care about the signs, we do not
write the shifts)
\begin{equation}\label{firstcomp}
\xymatrix{ {\Hom_{A-A}(C, C^{\otimes_A n})} \ar[d]^{\Id_M \otimes_A -} \\ 
{\Hom_{A-A}(M \otimes_A C, M \otimes_A C^{\otimes_S n})} \ar[d]^{-\circ\delta} \\
{\Hom_{A-A}(M, M \otimes_A C^{\otimes_A n})} \ar[d]^{-\circ \mu} \\
{\Hom_{A-A}(A \otimes_A M, M \otimes_A C^{\otimes_A n})}  \ar[d]^{- \circ (\epsilon_{{{\tilde{B}A}}} \otimes_A \id_M)}\\
{\Hom_{A-A}({{\tilde{B}A}} \otimes_A M, M \otimes_A C^{\otimes_A n})} }
\end{equation}
  We analyse the 4 individual components of the decomposition in
  \eqref{firstcomp} separately.  The first morphism $\id_M \otimes_A -$
  is a quasi-isomorphism from (iii). All three other morphisms take
  the form $-\circ g$ for a morphism $g$ between homotopy projective
  $A$-bimodules, where we have used (i) (in particular the $k$-projectivity of $A$ is used to guarantee that ${{\tilde{B}A}}$ is homotopy projective in $C(\Bimod(A))$). Hence, in each case, it is
  sufficient that the morphism $g$ is a quasi-isomorphism in order for
  $-\circ g$ to be a quasi-isomorphism. For $g = \delta$, this follows from
  (ii) and Lemma \ref{lemdeltaqis} below. For $g = \mu$, this follows from
  the fact that $\mu$ is an isomorphism of complexes. For
  $g = \epsilon_{{{\tilde{B}A}}} \otimes_A \id_M$, this follows from the fact that
  $\epsilon_{{{\tilde{B}A}}}$ is a quasi-isomorphism and $M$ is homotopy
  projective.
\end{proof}
We have used:
\begin{lemma}\label{lemdeltaqis}
Assume (i)(ii) hold.
Then for every co-unital right $C$-comodule $M$, the co-action $\delta: M \lra M \otimes_A C$ is a quasi-isomorphism.
\end{lemma}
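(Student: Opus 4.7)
The plan is to exploit the counit axiom for the comodule structure on $M$, which provides a one-sided retraction of $\delta$. Explicitly, counitality says that the composition
\[
M \xrightarrow{\;\delta\;} M \otimes_A C \xrightarrow{\;\id_M \otimes_A \epsilon_C\;} M \otimes_A A \cong M
\]
equals $\id_M$. Passing to cohomology, $H^\ast(\id_M\otimes_A \epsilon_C)\circ H^\ast(\delta)=\id$, so as soon as the retraction $\id_M\otimes_A \epsilon_C$ is a quasi-isomorphism, its section $\delta$ must be one as well. Thus the whole problem reduces to showing that $\id_M \otimes_A \epsilon_C$ is a quasi-isomorphism.

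To prove this, I would first argue that the functor $M\otimes_A(-)$ preserves quasi-isomorphisms of $A$-bimodules. By hypothesis (i), $M$ is homotopy projective in $C(\Bimod(A))$, and $A$ is $k$-projective. Combining these, a standard argument shows $M$ is homotopy flat over $A$ on the relevant side: homotopy projective bimodules are retracts (up to homotopy) of complexes built from the free bimodule $A\otimes A$, which, when $A$ is $k$-projective, is flat as a one-sided $A$-module; hence $M\otimes_A-$ sends quasi-isomorphisms to quasi-isomorphisms. Applying this to the quasi-isomorphism $\epsilon_C\colon C\to A$ provided by (ii) yields the desired conclusion that $\id_M\otimes_A\epsilon_C$ is a quasi-isomorphism.

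Combining the two steps finishes the proof: $\delta$ is a quasi-isomorphism because it is a section of the quasi-isomorphism $\id_M\otimes_A \epsilon_C$.

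The only potential subtlety, and thus the main technical point, is the transfer from bimodule homotopy projectivity to one-sided homotopy flatness of $M$ over $A$. This is where the $k$-projectivity assumption on $A$ is genuinely used; without it, $A\otimes A$ need not be flat as a right $A$-module, and the argument that $M\otimes_A-$ preserves quasi-isomorphisms could fail. Everything else is a direct consequence of the coalgebra counit axiom together with the standard principle that a retraction of a quasi-isomorphism is itself a quasi-isomorphism.
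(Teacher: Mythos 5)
Your proof is correct and follows essentially the same route as the paper: the counit axiom exhibits $\delta$ as a section of $\id_M\otimes_A\epsilon_C$, and the latter is shown to be a quasi-isomorphism using that (i) makes $M$ homotopy flat as a one-sided $A$-module (the paper phrases this via a left $A$-module splitting of $\epsilon_C$ together with right homotopy projectivity of $M$, but the content is the same). You also correctly identify the $k$-projectivity of $A$ as the point where the transfer from bimodule homotopy projectivity to one-sided homotopy flatness is needed.
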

\begin{proof}
Since $A$ is a left projective $A$-module by (i) the map $\epsilon_C$ is split as left $A$-modules. Hence $M\otimes_A C\xrightarrow{\Id_M \otimes \epsilon_{C}} M\otimes_A A\cong M$ is a quasi-isomorphism
(using for example that by (ii) $M$ is right homotopy projective). It now suffices to note that the composition
$
M\xrightarrow{\delta} M\otimes_A C\xrightarrow{\Id_M\otimes \epsilon} M
$
is the identity.
\end{proof}

\subsection{The projection $\pi_C$}\label{parpiC}
The aim in this section is to show that under suitable conditions the projection $\pi_C$ is also a quasi-isomorphism.
Lemma \ref{lem:coarse}  this is equivalent to the right Keller condition on $(A,M,C)$. This is covered in the next proposition.
\begin{proposition}
\label{propalt}
If the conditions (i), (ii), (iv) hold then for every $m\ge 0$
\begin{multline*}
\Hom_{A-A}(({{\tilde{B}A}})_m,A)\\\xrightarrow{f_{A,M}}\left(\prod_{n\ge 0}\Hom_{A-A}(({{\tilde{B}A}})_m\otimes_A M,M\otimes_A (\Sigma^{-1}C)^{\otimes_A n}),d=Q^1+\partial_{M,C}\right)
\end{multline*}
is a quasi-isomorphism.
\end{proposition}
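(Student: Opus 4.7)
The plan is to filter the right hand side by the $C$-exponent $n$ and analyse the resulting spectral sequence, essentially reproducing the argument sketched in the proof of Proposition \ref{prop:mainprop} (but using the \emph{strict} counit rather than a homotopy counit). I would set
$F^p = \prod_{n \ge p} \Hom_{A-A}\bigl(({{\tilde{B}A}})_m \otimes_A M,\, M \otimes_A (\Sigma^{-1}C)^{\otimes_A n}\bigr)$; since $Q^1$ preserves $n$ while $\partial_{M,C}$ strictly raises it by one, this filtration is complete Hausdorff and the associated spectral sequence converges. The $E_0$-differential is $Q^1$, so $E_1^{p,q}$ is the cohomology of $\Hom_{A-A}\bigl(({{\tilde{B}A}})_m \otimes_A M,\, M \otimes_A (\Sigma^{-1}C)^{\otimes_A p}\bigr)$ in $Q^1$-degree $q$.

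Hypothesis (i) ensures that $({{\tilde{B}A}})_m \otimes_A M$ is homotopy projective: $A$ being $k$-projective makes $({{\tilde{B}A}})_m$ a homotopy projective bimodule, and tensoring with the homotopy projective $M$ preserves this. Hence the Hom complex above computes $\RHom_{A-A}$. Combining (i) and (ii), the quasi-isomorphism $\epsilon_C^{\otimes p}$ induces $M \otimes_A C^{\otimes_A p} \simeq M$, so up to a suspension $E_1^{p,q} \cong H^q\bigl(\RHom_{A-A}(({{\tilde{B}A}})_m \otimes_A M,\, M)\bigr)$. Applying the right Keller condition (iv) with parameter $m{+}2 \ge 2$ — which matches $({{\tilde{B}A}})_m \cong \Sigma^m A^{\otimes (m+2)}$ as a graded bimodule — identifies this further (up to the same suspension) with $H^q\Hom_{A-A}(({{\tilde{B}A}})_m,\, A)$.

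Next I would analyse the induced differential $d_2$ on $E_1$ coming from $\partial_{M,C}=[\xi_{\underline{M};\underline{M},\underline{C}},-] + \xi_{\underline{C};\underline{C},\underline{C}}\bullet -$. Expanding the dot products via the brace formula \eqref{eq:moperations} yields one coaction term $\delta_M \otimes \id^p$ together with $p$ comultiplication terms $\id \otimes \Delta_i$, with alternating Koszul signs. Under the identifications above (which amount to post-composing with $\id_M \otimes \epsilon_C^{\otimes (p+1)}$) the counit axioms $(\id \otimes \epsilon_C)\delta_M = \id_M$ and $(\epsilon_C \otimes \epsilon_C)\Delta_C = \epsilon_C$ collapse each of the $p+1$ summands to $\id_M$, and the resulting alternating sum is zero for even $p$ and an isomorphism for odd $p$. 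Thus $E_2$ is concentrated in the column $p=0$ and equals $H^\bullet\Hom_{A-A}(({{\tilde{B}A}})_m, A)$; since by Lemma \ref{lem:rewrite} $f^m_{A,M}$ lands in the $n=0$ piece as $\phi \mapsto -\mu \circ (\eta^{-1} \otimes \id_M) \circ (\phi \otimes \id_M)$, which is precisely the Keller quasi-isomorphism of (iv), it induces the identity on $E_2^{0,\bullet}$ and the proposition follows. The main obstacle is the sign bookkeeping that produces this zero-on-even/isomorphism-on-odd pattern: the Koszul signs from the weight shift $\Xi^w$ and the suspensions $\Sigma^{\pm 1}$ must combine with the alternating signs from the $B_\infty$ dot products so as to yield exactly $\sum_{i=0}^{p}(-1)^i\,\id_M$ after collapsing via the counit.
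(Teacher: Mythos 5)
Your overall strategy (filter by the $C$-exponent, show the induced differential alternates between zero and an isomorphism, conclude that everything is concentrated in the column $n=0$) is the same as the paper's, and your use of (i), (ii), (iv) to identify the $E_1$-terms is fine. However, there is a genuine error in the key computation of the induced differential. The component $\partial_{M,C}^{m,p}=[\xi_{\underline{M};\underline{M},\underline{C}},-]+\xi_{\underline{C};\underline{C},\underline{C}}\bullet-$ applied to $\phi\in\Hom((\Sigma A)^{\otimes m}\otimes M, M\otimes(\Sigma^{-1}C)^{\otimes p})$ has $p+2$ summands, not $p+1$: besides the post-composition $(\delta_M\otimes\id^{\otimes p})\circ\phi$ and the $p$ terms inserting $\Delta$ at the $C$-outputs, the commutator $[\xi_{\underline{M};\underline{M},\underline{C}},\phi]$ also contains $\pm\,\phi\bullet\xi_{\underline{M};\underline{M},\underline{C}}=\pm(\phi\otimes\id_C)\circ(\id^{\otimes m}\otimes\delta_M)$, where $\delta_M$ is plugged into the $\underline{M}$-input of $\phi$ and its $\underline{C}$-output passes through (an allowed composition of type $a\circ a=a_{\underline{M}}$ in the table of \S\ref{sec:restricted_end}). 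This omission is not cosmetic: after collapsing each summand to $\pm\id$ via the counit, an alternating sum of $p+1$ terms equals an isomorphism for $p$ even and zero for $p$ odd --- exactly the opposite of the pattern you assert and the one you need. With that parity, $E_2^{0,\bullet}=\ker(E_1^{0,\bullet}\to E_1^{1,\bullet})=0$ and the argument collapses. Your stated conclusion (zero for $p$ even, isomorphism for $p$ odd) is the correct one, but it only comes out once the missing $(p+2)$-nd term is included; as written, your count and your conclusion contradict each other.

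For comparison, the paper avoids having to collapse terms via counit identities on the $E_1$-page: it first uses (iv) to reduce to showing that the projection onto the factor $n=0$ is a quasi-isomorphism, then uses (i) and (ii) to replace $C$ by $A$ equipped with the \emph{trivial} semi-coalgebra structure (all of $\delta$, $\Delta$, $\epsilon$ equal to identities). In that model the differential $\sigma_n$ is literally $0$ for $n$ even and $\pm\eta^{-1}\circ-$ for $n$ odd at the chain level, so the complement of the first column is visibly contractible and no spectral-sequence convergence argument for the product total complex is needed. If you keep your version, you should both restore the missing term and say a word about why the spectral sequence of the complete filtration on the product converges (it degenerates to a single column at $E_2$, so Boardman-type conditional convergence suffices).
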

\begin{proof}
To simplify the notations we write $X=({{\tilde{B}A}})_m$.
Using condition (iv) it is sufficient to prove that the projection
\begin{multline*}
\left(\prod_{n\ge 0}\Hom_{A-A}({X}\otimes_A M,M\otimes_A (\Sigma^{-1}C)^{\otimes_A n}),d=Q^1+\partial_{M,C}\right)\\
\lra 
\biggl(\Hom_{A-A}({X}\otimes_A M,M),d=Q^1\biggr)
\end{multline*}
is a quasi-isomorphism. 
By (ii) $\epsilon_C$ is a quasi-isomorphism. Hence by (i), 
\[
(\Sigma^{-1} C)^{\otimes_A n}\r (\Sigma^{-1}A)^{\otimes_A n}
\]
is a quasi-isomorphism. Using (i) again it follows that we have to prove that the projection
\begin{multline}
\label{eq:projection}
\left(\prod_{n\ge 0}\Hom_{A-A}({X}\otimes_A M,M\otimes_A (\Sigma^{-1}A)^{\otimes_A n}),d=Q^1+\partial_{M,A}\right)\\
\lra \biggl(\Hom_{A-A}({X}\otimes_A M,M),d=Q^1\biggr)
\end{multline}
is a quasi-isomorphism where $\partial_{M,A}$ is deduced from the trivial semi-coalgebra structure $(A,M,A)$ where all the data in the ``co-part'' (represented by $\delta:M\r M\otimes_A A$, $\Delta:A\r A\otimes_A A$, $\epsilon:A\r A$) consists of identity morphisms.

Let $\sigma_n$ be as in the following commutative diagram
\[
\xymatrix@C=1.5em{
\Hom_{A-A}(X\otimes_A M,M\otimes_A (\Sigma^{-1}A)^{\otimes_A n})\ar[d]_{\partial_{M,A}}\ar[r]^-{\cong}&\Hom_{A-A}(X\otimes_A M,\Sigma^{-n}M)\ar[d]^{\sigma_n}\\
\Hom_{A-A}(X\otimes_A M,M\otimes_A (\Sigma^{-1}A)^{\otimes_A (n+1}))\ar[r]_-{\cong}&\Hom_{A-A}(X\otimes_A M,\Sigma^{-n-1}M)
}
\]
One easily verifies
\[
\sigma_n=
\begin{cases}
\pm \eta^{-1}\circ -&\text{if $n$ is odd}\\
0&\text{if $n$ is even}
\end{cases}
\]
It follows that the left hand side of \eqref{eq:projection} (viewed as a double complex) really looks like
\begin{multline*}
\biggl(\Hom_{A-A}({X}\otimes_A M,M),d=Q^1\biggr)\xrightarrow{0}
\biggl(\Hom_{A-A}({X}\otimes_A M,M),d=Q^1\biggr)\\\xrightarrow{\cong}
\biggl(\Hom_{A-A}({X}\otimes_A M,M),d=Q^1\biggr)\xrightarrow{0}
\cdots
\end{multline*}
Hence the projection on the first column is a quasi-isomorphism.
\end{proof}

\subsection{Proof of Theorem \ref{th:mainth}}
\label{sec:BA}

In this section, we give the proof of Theorem \ref{th:mainth}.
For $A$, $C$ as in the statement of the theorem
we put $M=C$ and we endow
$C$ with the
canonical structure of $A$-$C$-bimodule. According to \S
\ref{parstructure}, these data define a semi-coalgebra structure on
$(A, M, C)$.

In view of Theorem \ref{th:conditions} it is sufficient to prove that the conditions (i)(ii)(iii)(iv) from
\S\ref{sec:conditions} hold. (i) is clear. (ii) is Lemma \ref{lem:counitquasi}. Finally (iii)(iv) follow
from Lemma \ref{lemlem} below.

\begin{lemma}\label{lemlem}
  Consider $X, Y$ complexes of $A$-bimodules with $X$
  homotopy-projective.
Then $M = C$ satisfies the left and
  right Keller condition with respect to $X$ and~$Y$.
\end{lemma}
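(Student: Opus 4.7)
The plan is to reduce both Keller conditions to the quasi-isomorphism $\epsilon: C \to A$ via a commutative square. For the left Keller condition with respect to $X, Y$, I would consider the square
\[
\xymatrix{
\Hom_{A-A}(X, Y) \ar[r]^-{\id_C \otimes_A -} \ar[d]_-{\cong} & \Hom_{A-A}(C \otimes_A X, C \otimes_A Y) \ar[d]^-{(\epsilon \otimes \id_Y)_*} \\
\Hom_{A-A}(X, Y) \ar[r]_-{(\epsilon \otimes \id_X)^*} & \Hom_{A-A}(C \otimes_A X, Y)
}
\]
whose left vertical is the canonical isomorphism $A \otimes_A X \cong X$, whose bottom is precomposition with $\epsilon \otimes \id_X$, and whose right is postcomposition with $\epsilon \otimes \id_Y$. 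Commutativity follows from $A$-linearity, since both routes send a bimodule map $\phi: X \to Y$ to $c \otimes x \mapsto \epsilon(c)\phi(x)$. Once the bottom and right maps are shown to be quasi-isomorphisms, the top map (which is precisely the left Keller map) will follow by two-out-of-three.

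The verification relies on two standard technical ingredients enabled by the hypothesis that $A$ is $k$-projective. First, h-projectivity of an $A$-bimodule complex implies h-projectivity as a one-sided $A$-module complex (via the generating cofibrations $A \otimes_k V \otimes_k A$, which remain projective after restricting the action when $A$ is $k$-flat). Consequently $\epsilon: C \to A$ is a quasi-isomorphism between h-projective right $A$-module complexes, its cone is acyclic and h-projective hence contractible, and applying $- \otimes_A W$ preserves contractibility; thus $\epsilon \otimes_A \id_W: C \otimes_A W \to W$ is a quasi-isomorphism for every left $A$-module complex $W$, and symmetrically on the other side. Second, since $C$ and $X$ are h-projective bimodules and $A$ is $k$-projective, the tensor product $C \otimes_A X$ is again h-projective as a bimodule. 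Granting these, the bottom map is precomposition with a quasi-isomorphism between h-projective bimodule complexes, hence a homotopy equivalence of $\Hom$-complexes; the right map is postcomposition with the quasi-isomorphism $\epsilon \otimes \id_Y$, and since $C \otimes_A X$ is h-projective, $\Hom_{A-A}(C \otimes_A X, -)$ preserves quasi-isomorphisms. The right Keller condition is obtained from the mirror-image square, tensoring with $C$ on the right and using $\id_W \otimes \epsilon$ in place of $\epsilon \otimes \id_W$.

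The main obstacle is the second technical ingredient, namely that $C \otimes_A X$ is h-projective as a bimodule when $C$ and $X$ both are and $A$ is $k$-projective. This is a standard property of bimodules over a $k$-projective base, but a clean verification typically goes either through the projective model structure on $C(\Bimod(A))$ (checking stability of cofibrant objects under $- \otimes_A -$) or a direct cell-complex presentation of h-projectives as retracts of iterated extensions of modules of the form $A \otimes_k V \otimes_k A$.
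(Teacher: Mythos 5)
Your proof is correct and follows essentially the same route as the paper: the paper's argument is exactly your commutative square, read as the zigzag $\Hom_{A-A}(X,Y)\to\Hom_{A-A}(X\otimes_A C,Y)\leftarrow\Hom_{A-A}(X\otimes_A C,Y\otimes_A C)$, with the two legs shown to be quasi-isomorphisms via the contractibility of $\cone(C\to A)$ as a one-sided complex of $A$-modules (using $k$-projectivity of $A$). If anything you are more explicit than the paper, which leaves implicit the h-projectivity of $C\otimes_A X$ as a bimodule complex that both arguments ultimately rely on.
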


\begin{proof}
The proofs of the two statements are entirely analogous, we look at the first statement.
For $\phi \in \Hom_A(X,Y)$, we have a commutative diagram
\[
\xymatrix{ {X \otimes_A C} \ar[d]_{\phi \otimes \id_{C}} \ar[r]^-{\id_X \otimes \epsilon} & X \ar[d]^{\phi} \\ {Y \otimes_A C} \ar[r]_-{\id_Y \otimes \epsilon} & Y }
\]
with the rows being quasi-isomorphisms (as $\cone (C\r A)$ is homotopy projective complex on the left and right and is therefore contractible on the left and right).

We may thus represent $- \otimes_A C$ in the derived category as a zigzag of quasi-isomorphism:
\[
\xymatrix{ {\Hom_A(X,Y)} \ar[r]_-{-(\id \otimes \epsilon)} & {\Hom_A(X \otimes_A C, Y)} & {\Hom_A(X \otimes_A C, Y \otimes_A C).} \ar[l]^-{(\id_Y \otimes \epsilon)-} }
\qedhere\]
\end{proof}

\bibliographystyle{amsplain}
\bibliography{BibfileBinfty1}

\end{document}